\newcommand{\bigO}{\mathcal{O}}
\newcommand{\pR}{\mathbb{R}}
\newcommand{\abs}{\mathrm{abs}}
\newcommand*\Col[1]{\mathcal{C}(#1)}
\newcommand{\rict}{\Delta} 
\newcommand{\ric}{\bar{\Delta}}
\newcommand{\rcc}{\gamma}
\newcommand{\cohc}{\mu}
\newcommand{\bR}{\mathbb{R}}
\DeclareMathOperator{\rank}{rank}
\DeclareMathOperator{\LS}{LS}
\DeclareMathOperator{\E}{\mathbb{E}}
\DeclareMathOperator{\A}{\mathcal{A}}
\DeclareMathOperator{\cA}{\mathcal{A}}
\DeclareMathOperator{\cB}{\mathcal{B}}
\DeclareMathOperator{\G}{\mathcal{G}} 
\DeclareMathOperator{\V}{\mathcal{V}}
\DeclareMathOperator{\cR}{\mathfrak{R}}
\DeclareMathOperator{\fR}{\mathfrak{R}}
\DeclareMathOperator{\eb}{\varepsilon_b}
\DeclareMathOperator{\ep}{\varepsilon_p}
\DeclareMathOperator{\epb}{\varepsilon_b}
\DeclareMathOperator{\epp}{\varepsilon_p}
\DeclareMathOperator{\step}{\alpha}
\DeclareMathOperator{\supp}{supp}
\DeclareMathOperator*{\argmin}{arg\,min}
\DeclareMathOperator{\Proj}{\mathcal{P}}
\DeclareMathOperator{\RPCA}{\mathrm{RPCA}}
\newcommand*\Projarg[3]{\mathrm{Proj}_{(#1, #2)}\left( #3\right)}
\newcommand*\RPCAarg[5]{\mathrm{RPCA}_{#1, #2, #3}(#4, #5)}
\newcommand*\HTSarg[2]{\mathrm{HT}_{#2}(#1)}
\DeclarePairedDelimiterX{\inp}[2]{\langle}{\rangle}{#1, #2}
\newcommand*\inner[2]{\left\langle #1,\,#2 \right\rangle}
\newcommand*\mmat[1]{\mathrm{mat}\left(#1\right)}
\newcommand*\mvec[1]{\mathrm{vec}\left(#1\right)}
\newcommand*{\horzbar}{\rule[.5ex]{2.5ex}{0.5pt}}
\newcommand*\prooftitle[3]{\noindent\textbf{Proof of #1} (#2), stated on page~#3.}
\newcommand*\prooftitleT[3]{\noindent\textbf{Proof of #1} (#2).}
\newcommand{\e}{\mathrm{e}}
\newtheorem{definition}{Definition}[section]
\declaretheorem[name=Lemma,numberwithin=section]{lemma}
\declaretheorem[name=Theorem]{theorem}
\newtheorem{corollary}{Corollary}[section]
\newcommand{\Rev}[1]{#1}
\begin{document}

\begin{frontmatter}

\title{Compressed sensing of low-rank plus sparse matrices}

\author[mymainaddress,mysecondaryaddress]{Jared Tanner}
\ead{tanner@maths.ox.ac.uk}
\ead[url]{https://people.maths.ox.ac.uk/tanner/}

\author[mymainaddress]{Simon Vary\corref{mycorrespondingauthor}}
\cortext[mycorrespondingauthor]{Corresponding author}
\ead{vary.simon@gmail.com}
\ead[url]{https://simonvary.github.io}

\address[mymainaddress]{Mathematical Institute, University of Oxford, Oxford OX2 6GG, UK}
\address[mysecondaryaddress]{The Alan Turing Institute, The British Library, London NW1 2DB, UK}

\fntext[myfootnote]{This publication is based on work partially supported by: the EPSRC I-CASE studentship (voucher 15220165) in partnership with Leonardo, The Alan Turing Institute through EPSRC (EP/N510129/1) and the Turing Seed Funding grant SF019.}

\begin{abstract}
    
Expressing a matrix as the sum of a low-rank matrix plus a sparse
matrix is a flexible model capturing global and local features in data.  This
model is the foundation of robust principle component analysis
\cite{Candes2011robust, Chandrasekaran2009ranksparsity}, and popularized by dynamic-foreground/static-background
separation \cite{Bouwmans2016decomposition}.  Compressed
sensing, matrix completion, and their variants \cite{Eldar2012compressed,Foucart2013a} have
established that data satisfying low complexity models can be
efficiently measured and recovered from a number of measurements
proportional to the model complexity rather than the ambient
dimension.  This manuscript develops similar guarantees showing that
$m\times n$ matrices that can be expressed as the sum of a rank-$r$ matrix and a $s$-sparse matrix can be recovered by computationally tractable methods from 
$\mathcal{O}(r(m+n-r)+s)\log(mn/s)$ linear measurements.  More specifically,
we establish that \Rev{the low-rank plus sparse matrix set is closed provided the incoherence of the low-rank component is upper bounded as $\mu < \sqrt{mn}/(r\sqrt{s})$, and subsequently,} the restricted isometry constants for the
aforementioned matrices remain bounded independent of problem size
provided $p/mn$, $s/p$, and $r(m+n-r)/p$ remain fixed.  Additionally,
we show that semidefinite programming and two hard threshold gradient descent algorithms, NIHT and NAHT, converge to the measured matrix provided
the measurement operator's RIC's are sufficiently small. \Rev{These results also provably solve convex and non-convex formulation of Robust PCA with the asymptotically optimal fraction of corruptions $\alpha = \bigO\left(1 / (\mu r) \right)$, where $s = \alpha^2 mn$, and improve the previously best known guarantees by not requiring that the fraction of corruptions is spread in every column and row by being upper bounded by $\alpha$.} Numerical experiments illustrating these results are shown for synthetic problems, dynamic-foreground/static-background separation, and multispectral imaging.
\end{abstract}

\begin{keyword}
matrix sensing \sep low-rank plus sparse matrix \sep robust PCA \sep restricted isometry property\sep non-convex methods
\MSC[2010] 15A29\sep 41A29\sep  62H25 \sep 65F10 \sep 65J20 \sep 68Q25 \sep 90C22 \sep 90C26
\end{keyword}

\end{frontmatter}


\section{Introduction}

Data with a known underlying low-dimensional structure can often be estimated from a number of measurements proportional to the degrees of freedom of the underlying model, rather than what its ambient dimension would suggests. Examples of such low-dimensional structures for which the aforementioned is true include: compressed sensing \cite{Donoho2006compressed,Candes2006robust,Candes2005decoding}, matrix completion \cite{Candes2009exact,Candes2010thepower, Recht2010guaranteed}, sparse measures \cite{Candes2014towards,Duval2015exact,Eftekhari2019sparse}, and atomic decompositions \cite{Chi2020harnessing} more generally. Our work extends these results to the matrices which are formed as the sum of a low-rank matrix and a sparse matrix, a model popularized by the  work on robust principle component anaysis (Robust PCA) \cite{Candes2011robust,Chandrasekaran2009ranksparsity}.   Specifically, we consider matrices $X\in\mathbb{R}^{m\times n}$ of the form $X = L + S$, where $L$ is of rank at most $r$, and $S$ has at most $s$ non-zero entries, $\|S\|_0 \leq s$.   The low-rank plus sparse model is a rich model with the low rank component modeling global correlations, while the additive sparse component allows a fixed number of entries to deviate from this global model in an arbitrary way. Among applications of this model are image restoration \cite{Gu2014weighted}, hyperspectral image denoising \cite{Gogna2014split, Chen2017denoising,Wei2016hyperspectral}, face detection \cite{Luan2014extracting,Wright2009robust}, acceleration of dynamic MRI data acquisition \cite{Xu2017dynamic}, analysis of medical imagery \cite{Gao2011robust}, separation of moving objects in at otherwise static scene \cite{Bouwmans2016decomposition}, and target detection \cite{Oreifej2013simultaneous}.

Unlike Robust PCA where $X$ is directly available, we consider the compressed sensing setting where $X$ is measured through a linear operator $\mathcal{A}(\cdot)$, where $\mathcal{A}:\mathbb{R}^{m\times n} \rightarrow \mathbb{R}^p$, $b\in\mathbb{R}^p$ and typically $p \ll mn$. Our contributions extend existing results on {\em restricted isometry constants} (RIC) for Gaussian and other measurement operators for sparse vectors~\cite{Baraniuk2008a} or low-rank matrices~\cite{Recht2010guaranteed} to the sets of low-rank plus sparse matrices.  For the set of matrices which are the sum of a low-rank plus a sparse matrix the results differ subtly due to the space not being closed, in that there are matrices $X$ for which there does not exist a nearest projection to the set of low-rank plus sparse matrices \cite{Tanner2019matrix}.  To overcome this, we introduce the set of low-rank plus sparse matrices with \Rev{the incoherence constraint on the singular vectors of the low-rank component, see Definition~\ref{def:ls}}  
\begin{definition}[Low-rank plus sparse set \Rev{$\LS_{m,n}(r,s,\mu)$}]\label{def:ls}
Denote the set of $m\times n$ real matrices that are the sum of a rank $r$ matrix and a $s$ sparse matrix as
\begin{equation}
	\LS_{m,n}(r,s,\mu) = \left\{
			L + S \in\mathbb{R}^{m\times n}:\,
				\rank(L)\leq r,\,  \left\|S\right\|_0 \leq s,\,
				\begin{array}{c}\max\limits_{i\in[m]}\left\|U^T e_i \right\|_2 \leq \sqrt{\mu r/m} \\ 
				\max\limits_{i\in[n]} \left\|V^T f_i \right\|_2 \leq \sqrt{\mu r / n}
				\end{array}
		\right\},
\end{equation}
where $U\in \mathbb{R}^{m\times r}$, $V \in \mathbb{R}^{n\times r}$ are the first $r$ left and the right singular vectors of $L$ respectively, $e_i \in \mathbb{R}^{m}, f_j\in\mathbb{R}^{n}$ are the canonical basis vectors, and $\mu\in\left[ 1, \sqrt{mn}/r \right]$ controls the incoherence of $L$.
\end{definition}
\Rev{The parameter $\mu$ is referred to as the \emph{incoherence} of the low-rank component \cite{Candes2011robust,Chandrasekaran2009ranksparsity} and it controls correlation between the low-rank component and the sparse component. We show that $\LS_{m,n}(r,s,\mu)$ sets are closed when the incoherence is sufficiently upper bounded as $\mu < \sqrt{mn}/(r\sqrt{s})$, see Lemma~\ref{lemma:LS_mu_closed}. This bound is equivalent to the asymptotically optimal scaling in terms of $r,s$ and $\mu$ in the recovery guarantees independently achieved in Robust PCA using convex relaxation \cite{Hsu2011} or in nonconvex methods \cite{Netrapalli2014provable}, but without the need for the assumption that the fraction of corruptions in each column and row is upper bounded.}

The natural generalization of the RIC definition from sparse vectors and low-rank matrices to the space $\LS_{m,n}(r,s,\mu)$ is given in Definition~\ref{def:ric}.
\begin{definition}[RIC for $\LS_{m,n}(r,s,\mu)$] \label{def:ric}
Let $\A: \pR^{m\times n}\rightarrow \pR^{p}$ be a linear map. For every pair of integers $(r,s)$ and every $\mu\geq1$, define the $(r,s,\mu)$-restricted isometry constant to be the smallest $\rict_{r, s,\mu}(\A) > 0$ such that
\begin{equation}
	\left(1-\rict_{r, s,\mu}(\A)\right)\|X\|^2_F \leq \|\A(X)\|^2_2 \leq \left(1+\rict_{r, s,\mu}(\A)\right)\|X\|^2_F, \label{eq:ric_definition}
\end{equation}
for all matrices $X\in\LS_{m,n}(r,s,\mu)$.
\end{definition}

Random linear maps $\A$ which have a sufficient concentration of measure phenomenon can overcome the dimensionality of $\LS_{m,n}(r,s,\mu)$ to achieve $\rict_{r,s,\mu}$ which is bounded by a fixed value independent of dimension size provided the number of measurements $p$ is proportional to the degrees of freedom of a rank-$r$ plus sparsity-$s$ matrix $r(m+n -r)+s$. A suitable class of random linear maps is captured in the following definition.

\begin{definition}[Nearly isometrically distributed map] \label{def:near_isometry}
Let $\A$ be a random variable that takes values in linear maps $\pR^{m\times n}\rightarrow \pR^p$. We say that $\A$ is nearly isometrically distributed if, for $\forall X\in\pR^{m\times n}$,
\begin{equation}
	\E\left[ \left\|\A(X)\right\|^2 \right] = \|X\|^2_F \label{eq:expectation}
\end{equation}
and for all $\varepsilon \in (0,1)$, we have
\begin{equation}\label{eq:conc_measure}
\Pr\left( \left| \|\A(X)\|_2^2 - \|X\|_F^2 \right| \geq \varepsilon \|X\|^2_F \right) \leq 2 \exp{\left(-\frac{p}{2}\left( \varepsilon^2/2 - \varepsilon^3/3 \right) \right)},
\end{equation}
and there exists some constant $\gamma >0$ such that for all $t>0$, we have
\begin{equation}\label{eq:energy_decay}
	\Pr\left( \| \A \|\geq 1+ \sqrt{\frac{mn}{p}} + t \right) \leq \exp\left( -\gamma p t^2 \right).
\end{equation}
\end{definition}
There are two crucial properties for a random map to be nearly isometric. Firstly, it needs to be isometric in expectation as in~\eqref{eq:expectation}, and exponentially concentrated around the expected value as in~\eqref{eq:conc_measure}. Secondly, the probability of large distortions of length must be exponentially small as in~\eqref{eq:energy_decay}. This ensures that even after taking a union bound over an exponentially large covering number for $\LS_{m,n}(r,s,\mu)$, see Lemma~\ref{lemma:ls_rs_cover}, the probability of distortion remains small~\cite{Baraniuk2008a, Recht2010guaranteed}.

In addition to developing RIC bounds as in Definition~\ref{def:ric} we also show that the RIC of an operator implies uniqueness of the decomposition and that exact recovery is possible with computationally efficient algorithms such as convex relaxations or gradient descent methods.  
The following subsection summarizes our main contributions. The rest of the paper is organized as 
\begin{itemize}
	\item{In Section~\ref{sec:ric}, we prove that the RICs of $\LS_{m,n}(r,s,\mu)$ for Gaussian and fast Johnson-Lindenstrauss transform (FJLT) measurement operators remain bounded independent of problem size provided the number of measurements $p$ is proportional to $\bigO\left(r(m+n-r)+s\right)$.} 
	\item{In Section~\ref{sec:alg}, we prove that when the RICs of $\A(\cdot)$ are suitably bounded then the solution to a linear system $\A(X_0) = b$ has a unique decomposition in $\LS_{m,n}(r,s,\mu)$ that can be recovered using computationally tractable convex optimization solvers and hard thresholding gradient descent algorithms which are natural extensions of algorithms developed for compressed sensing \cite{Blumensath2010normalized} and matrix completion \cite{Tanner2013normalized}. \Rev{These results also provably solve Robust PCA with the asymptotically optimal fraction of corruptions $\alpha = \bigO\left(1 / (\mu r) \right)$, where $s = \alpha^2 mn$, and improve the previously known guarantees by not requiring the fraction of the sparse corruptions in every column and row is bounded by some $\alpha\in(0,1)$.}}
	\item{In Section~\ref{sec:numerics}, we empirically study the average case of recovery on synthetic data by solving convex optimization and by the proposed gradient descent methods and observe a phase transition in the space of parameters for which the methods succeed. We also give an example of two practical applications of the low-rank plus sparse matrix recovery in the form of a subsampled dynamic-foreground/static-background video separation and robust recovery of multispectral imagery.}
\end{itemize}

\subsection{Main contribution}
\Rev{We show that for sufficiently incoherent matrices the $\LS_{m,n}(r,s,\mu)$ set is a closed set, which is essential in developing the recovery guarantees with asymptotically optimal scaling $\mu = \bigO(\sqrt{mn}/(r\sqrt{s}))$.
\begin{lemma}[$\LS_{m,n}(r,s,\mu)$ is a closed set]\label{lemma:LS_mu_closed}
	Let $\mu < \sqrt{mn}/(r\sqrt{s})$ and $X = L + S\in\LS_{m,n}(r,s,\mu)$. Then the following holds
	\begin{enumerate}
		\item{\quad$\left|\inner{L}{S}\right| \leq \mu \frac{r \sqrt{s}}{\sqrt{mn}}\left\| L \right\|_F \left\| S \right\|_F$,}
		\item{\quad$\left\| L \right\|_F \leq \left(1 - \mu^2\frac{r^2 s}{mn} \right)^{-1/2} \left\| X \right\|_F$ and $\left\| S \right\|_F \leq \left(1 - \mu^2\frac{r^2 s}{mn} \right)^{-1/2} \left\| X \right\|_F$,}
		\item{\quad$\LS_{m,n}(r,s,\mu)$ is a closed set.} 
	\end{enumerate}
\end{lemma}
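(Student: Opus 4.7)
\textbf{Proof plan for Lemma~\ref{lemma:LS_mu_closed}.}

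\emph{Part (1).} My approach is to first control the elementwise magnitude $\|L\|_\infty$ by the incoherence, and then apply Cauchy--Schwarz over the sparse support of $S$. Writing $L=U\Sigma V^T$ with $U,V$ satisfying the incoherence bound, for any $(i,j)$ I have $|L_{ij}|=|e_i^T U\Sigma V^T f_j|\leq \|U^T e_i\|_2\|\Sigma\|_{\mathrm{op}}\|V^T f_j\|_2$, so that
\[
	\|L\|_\infty \;\leq\; \sqrt{\tfrac{\mu r}{m}}\cdot\|L\|_{\mathrm{op}}\cdot\sqrt{\tfrac{\mu r}{n}} \;\leq\; \tfrac{\mu r}{\sqrt{mn}}\|L\|_F.
\]
Then $|\inner{L}{S}|=|\sum_{(i,j)\in\supp(S)}L_{ij}S_{ij}|\leq \|L\|_\infty\cdot\|S\|_1$, and since $S$ has at most $s$ non-zeros, $\|S\|_1\leq \sqrt{s}\|S\|_F$. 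Combining these two estimates yields (1).

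\emph{Part (2).} This follows by substituting (1) into the expansion $\|X\|_F^2=\|L\|_F^2+2\inner{L}{S}+\|S\|_F^2$. Setting $\alpha=\mu r\sqrt{s}/\sqrt{mn}<1$ and using $2\inner{L}{S}\geq -2\alpha\|L\|_F\|S\|_F$, I complete the square:
\[
	\|X\|_F^2 \;\geq\; (1-\alpha^2)\|L\|_F^2 + \bigl(\|S\|_F-\alpha\|L\|_F\bigr)^2 \;\geq\; (1-\alpha^2)\|L\|_F^2,
\]
which gives the desired bound on $\|L\|_F$; the bound on $\|S\|_F$ follows by symmetry.

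\emph{Part (3).} I plan to use the sequential characterization of closed sets. Let $X_k=L_k+S_k\in\LS_{m,n}(r,s,\mu)$ with $X_k\to X$ in Frobenius norm. Part (2) gives uniform bounds $\|L_k\|_F, \|S_k\|_F\leq (1-\alpha^2)^{-1/2}\|X_k\|_F$, so both sequences are bounded. Write each $L_k=U_k\Sigma_k V_k^T$ with $U_k,V_k$ on the (compact) Stiefel manifolds $\mathrm{St}(m,r),\mathrm{St}(n,r)$ and $\Sigma_k$ in a compact box of diagonal matrices. Passing to a subsequence I may assume $U_k\to U^\star$, $V_k\to V^\star$, $\Sigma_k\to\Sigma^\star$ and $S_k\to S^\star$. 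Setting $L^\star:=U^\star\Sigma^\star(V^\star)^T$ gives $X=L^\star+S^\star$, and I inherit $\rank L^\star\leq r$ and the incoherence bounds $\|(U^\star)^T e_i\|_2\leq\sqrt{\mu r/m}$ and similarly for $V^\star$ by continuity. The sparsity condition $\|S^\star\|_0\leq s$ is preserved because the set $\{S:\|S\|_0\leq s\}$ is a finite union of coordinate subspaces, hence closed.

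\emph{Main obstacle.} The delicate point in (3) is that Definition~\ref{def:ls} phrases incoherence in terms of the \emph{first $r$ singular vectors} of $L$, and the SVD is not continuous when singular values coincide or vanish. My plan sidesteps this by showing the equivalent statement that $L^\star$ admits \emph{some} decomposition $U^\star\Sigma^\star(V^\star)^T$ with the row-norm bounds satisfied, reorder/permute the columns of $(U^\star,V^\star)$ so that the diagonal entries of $\Sigma^\star$ are decreasing (this only permutes rows of the bound and hence preserves it), and appeal to the flexibility in the choice of singular vectors when multiplicities or zeros occur. This is where the hypothesis $\mu<\sqrt{mn}/(r\sqrt{s})$ is essential, since without $\alpha<1$ the estimates in (2) break down and the individual components $L_k,S_k$ need not even remain bounded along a convergent sequence of sums.
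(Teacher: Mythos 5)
Your proposal is correct, and its overall architecture matches the paper's proof: part (1) is the same incoherence-plus-Cauchy--Schwarz estimate (the paper bounds $|\inner{L}{S}|\leq \left\|\abs(U)\,\abs(V^T)\right\|_\infty\,\sigma_{\max}(L)\,\|S\|_1$ and then invokes incoherence and $\|S\|_1\leq\sqrt{s}\|S\|_F$, which is exactly your $\|L\|_\infty\leq \tfrac{\mu r}{\sqrt{mn}}\|L\|_{\mathrm{op}}$ route), and part (3) is the same bounded-components-plus-compactness argument. Two differences are worth recording. For part (2) the paper sets $x=\|L\|_F$, $y=\|S\|_F$, derives $2\gamma xy\geq|1-x^2-y^2|$ and analyzes the real roots of the resulting quadratic in $y$; your completing-the-square identity $\|X\|_F^2\geq(1-\alpha^2)\|L\|_F^2+\left(\|S\|_F-\alpha\|L\|_F\right)^2$ reaches the same bound in one line and is cleaner. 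For part (3) the paper extracts a convergent subsequence $L_i\to L$ only inside the compact set of rank-$\leq r$ matrices with bounded Frobenius norm and concludes $X=L+(X-L)\in\LS_{m,n}(r,s,\mu)$ without explicitly verifying that the limit $L$ satisfies the incoherence constraint that is part of the definition of the set; your finer subsequence argument on the Stiefel manifolds, which passes $\|U_k^Te_i\|_2\leq\sqrt{\mu r/m}$ to the limit and exhibits $U^\star\Sigma^\star(V^\star)^T$ as an admissible SVD of $L^\star$, addresses exactly this point. To make your final ``flexibility of singular vectors'' step airtight, note that $\|U^Te_i\|_2$ equals the norm of the orthogonal projection of $e_i$ onto the column span of $U$, so the incoherence bound is invariant under the rotations that make the SVD non-unique within repeated singular values, and when $\rank(L^\star)<r$ the limit $U^\star$ itself supplies an admissible completion of the singular-vector basis.
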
}

\Rev{The proof, given in \ref{sec:appendix_lemma} on page \pageref{lemma:LS_mu_closed_proof}, is a consequence of an upper bound on the magnitude of the inner product beteween a sufficiently incoherent low-rank matrix and a sparse matrix and then employing this bound to show that the Frobenius norm of the two components is upper bounded by the Frobenius norm of their sum, which also makes $\LS_{m,n}(r,s,\mu)$ a closed set.}

The foundational analytical tool for our recovery results is the RIC for $\LS_{m,n}(r,s,\mu)$, which as for other RICs \cite{Baraniuk2008a,Recht2010guaranteed}, follows from balancing a covering number for the set $\LS_{m,n}(r,s,\mu)$ and the measurement operator being a near isometry as defined in Definition~\ref{def:near_isometry}.

\begin{theorem}[RIC for $\LS_{m,n}\left(r, s,\mu \right)$]\label{thm:rip_ls}
For a given $m,n,p\in\mathbb{N}$, $\mu < \sqrt{mn}/(r\sqrt{s})$, $\rict \in (0,1)$, and a random linear transform $\mathcal{A}:\mathbb{R}^{m\times n}\rightarrow\mathbb{R}^p$ satisfying the concentration of measure inequalities in Definition \ref{def:near_isometry}, there exist constants $c_0, c_1 > 0$ such that the RIC for $\mathrm{LS}_{m,n}(r,s,\mu)$ is upper bounded with $\rict_{r,s,\mu}(\mathcal{A}) \leq \rict$  provided 
\begin{equation}
	p > c_0\left( r(m+n-r) + s \right)\log\left( \Rev{\left(1 - \mu^2\frac{r^2 s}{mn} \right)^{-1/2}}\frac{mn}{s}\right), \label{eq:ric_sample}
\end{equation}
with probability at least  $1-\exp{(-c_1 p)}$, where $c_0, c_1$ are constants that depend only on~$\rict$.
\end{theorem}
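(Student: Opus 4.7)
The plan is to follow the classical covering and union-bound strategy developed for RIC proofs in compressed sensing \cite{Baraniuk2008a} and low-rank matrix recovery \cite{Recht2010guaranteed}, adapted to the sum-decomposition structure and the incoherence constraint of $\LS_{m,n}(r,s,\mu)$. By homogeneity of \eqref{eq:ric_definition} and the fact that $\LS_{m,n}(r,s,\mu)$ is closed (Lemma~\ref{lemma:LS_mu_closed}(3)), it suffices to establish the two-sided bound $\bigl|\|\A(X)\|_2^2 - 1 \bigr| \le \rict$ uniformly over the Frobenius unit sphere $T = \{X \in \LS_{m,n}(r,s,\mu) : \|X\|_F = 1\}$.

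Next I would build an $\eb$-net $\mathcal{N}_{\eb}$ of $T$ in Frobenius norm via Lemma~\ref{lemma:ls_rs_cover}. Lemma~\ref{lemma:LS_mu_closed}(2) guarantees that the components of any $X \in T$ satisfy $\|L\|_F, \|S\|_F \le C_\mu := (1-\mu^2 r^2 s/(mn))^{-1/2}$, so an $\eb$-net for $T$ can be obtained as the pairwise sum of an $(\eb/2)$-net for the $\mu$-incoherent rank-$r$ ball of radius $C_\mu$ and an $(\eb/2)$-net for the $s$-sparse ball of radius $C_\mu$. Standard volume-comparison arguments give the first net with cardinality at most $(9 C_\mu / \eb)^{r(m+n-r)}$ and the second at most $\binom{mn}{s}(9 C_\mu/\eb)^s \le (9 e C_\mu\, mn/(s\eb))^{s}$. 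Taking logarithms yields an exponent of order $(r(m+n-r) + s)\log\!\left(C_\mu\, mn/(s\eb)\right)$, which matches the log factor appearing in \eqref{eq:ric_sample}.

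With the net in hand, I would apply the concentration inequality \eqref{eq:conc_measure} at tolerance $\rict/c$ for a suitable absolute constant $c$ to each element of $\mathcal{N}_{\eb}$ and union-bound. Choosing $p$ as in \eqref{eq:ric_sample} with $c_0$ sufficiently large (depending on $\rict$), the exponent $\tfrac{p}{2}(\varepsilon^2/2 - \varepsilon^3/3)$ dominates the logarithm of the covering number with an additive $c_1 p$ slack, so the concentration holds simultaneously on the entire net with probability at least $1 - e^{-c_1 p}$. The pass from the net back to $T$ proceeds by writing $X = X_0 + (X - X_0)$ with $X_0 \in \mathcal{N}_{\eb}$ closest to $X$ and bootstrapping the worst-case quantity $\beta = \sup_{X \in T} \bigl| \|\A(X)\|_2^2 - 1 \bigr|$ into a self-consistent inequality; choosing $\eb$ small enough (e.g.\ $\eb \lesssim \rict$) then yields $\beta \le \rict$, as desired.

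The main obstacle I expect is the net-to-sphere bootstrap, since differences of matrices in $\LS_{m,n}(r,s,\mu)$ are not themselves in the same set with the same parameters: $X - X_0$ has rank at most $2r$, sparsity at most $2s$, and the incoherence of its low-rank part can be worse than $\mu$. I would handle this either by covering the slightly enlarged set $\LS_{m,n}(2r, 2s, \mu')$ and absorbing the parameter inflation into the absolute constant $c_0$, or by a successive-refinement argument in the spirit of \cite{Baraniuk2008a,Recht2010guaranteed} that stays inside $T$. Careful bookkeeping of the factor $C_\mu$ is required so that the logarithm in \eqref{eq:ric_sample} remains sharp as $\mu$ approaches its critical value $\sqrt{mn}/(r\sqrt{s})$ and $C_\mu$ diverges.
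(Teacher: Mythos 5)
Your high-level architecture (covering number $\times$ concentration $\times$ union bound, with $\|L\|_F,\|S\|_F\le(1-\mu^2 r^2 s/(mn))^{-1/2}$ from Lemma~\ref{lemma:LS_mu_closed} feeding the logarithmic factor) is the right one, and your covering-number accounting reproduces the exponent in \eqref{eq:ric_sample}. The gap is exactly where you flag it: the net-to-sphere extension, which is the crux of the theorem and is left as a choice between two unexecuted options. The first option does not close as stated: if you define $\beta=\sup_{X\in T}|\|\A(X)\|_2^2-1|$ over $T\subset\LS_{m,n}(r,s,\mu)$ but the differences $X-X_0$ live in $\LS_{m,n}(2r,2s,\mu')$ with $\mu'$ uncontrolled (the singular vectors of $L-L_0$ need not inherit any incoherence), then bounding $\|\A(X-X_0)\|$ forces you to take a supremum over the enlarged set; a net for the enlarged set produces differences in a still larger set, and the recursion never terminates. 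The known way to close a direct-net bootstrap for the low-rank part is a factorized (SVD) net in which $X-\bar{X}$ is decomposed into a few rank-$\le r$ terms, each small in norm, so that the \emph{same} supremum reappears with a small prefactor; the sparse part is handled support-by-support, where differences do stay in the subspace; and Lemma~\ref{lemma:LS_mu_closed} is then needed again to relate the component suprema back to $\beta$. None of this is written out, so the proposal as it stands does not constitute a proof.

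For comparison, the paper sidesteps the issue with a two-level covering that is structurally different from yours: the outer cover is over \emph{subspaces} --- pairs of Grassmannian elements times support sets (Lemma~\ref{lemma:ls_rs_cover}) --- and within each fixed $\Sigma_{m,n}(V,W,T,\mu)$ the bootstrap closes because differences remain in that subspace (Lemma~\ref{lemma:rip_single}, via Lemma~\ref{lemma:ls_mu_additive}). The passage from a covering subspace to a nearby one is handled by the perturbation Lemma~\ref{lemma:var_delta}, whose error term is proportional to $\|\A\|$; this is precisely where the operator-norm tail bound \eqref{eq:energy_decay} of Definition~\ref{def:near_isometry} enters and what dictates the covering resolution in \eqref{eq:rip_epsilon}. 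Your route never invokes \eqref{eq:energy_decay}, which is consistent with a single-level net strategy but is also a signal that the hard step has been deferred rather than solved. To complete the argument you must either adopt the subspace-covering/perturbation structure or carry out the factorized-net bootstrap in full, including the bookkeeping for the lost incoherence of the differences.
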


Theorem \ref{thm:rip_ls} establishes that for random ensembles of linear transformations that satisfy the concentration of measure inequalities in Definition~\ref{def:near_isometry}, the RIC for $\mathrm{LS}_{m,n}(r,s,\mu)$ is upper bounded in the asymptotic regime as $m,n$ and $p$ approach infinity at appropriate rates \Rev{and the incoherence $\mu$ is sufficently upper bounded ensuring the set is closed; see Lemma~\ref{lemma:LS_mu_closed}}.  Specifically, the RIC remains bounded independent of the problem dimensions $m$ and $n$ provided $p$ to be taken proportional to the order of degrees of freedom of the rank-$r$ plus sparsity-$s$ matrices times a logarithmic factor as in \eqref{eq:ric_sample}.

Examples of random ensembles of $\mathcal{A}$ which satisfy the conditions of Definition \ref{def:near_isometry} include random Gaussian ensemble which acquires the information about the matrix $X$ through $p$ linear measurements of the form
\begin{equation}
	b_\ell := \A(X)_\ell = \inp{A^{(\ell)}}{X} \quad \text{for}\quad \ell = 1,2, \ldots, p,
\end{equation}
where the $p$ distinct sensing matrices $A^{(\ell)}\in\pR^{m\times n}$ are the sensing operators defining $\A$ and have entries sampled from the Gaussian distribution as $A^{(\ell)}_{i,j}\sim\mathcal{N}(0, 1/p)$. Other notable examples include symmetric Bernoulli ensembles, and Fast Johnson-Lindenstrauss Transform (FJLT)~\cite{Ailon2009the,Krahmer2011new}.

For a linear transform $\A$ which has RIC suitably upper bounded and a given vector of samples $b  = \A(X_0)$, the matrix $X_0$ is the only matrix in the set $\LS_{m,n}(r,s,\mu)$ that satisfies the linear constraint.
\begin{theorem}[Existence of a unique solution for $\A$ with RIC]\label{thm:thmexistunique}
Suppose that $\Delta_{2r,2s,\mu}(\mathcal{A}) < 1$ for some integers $r, s \geq 1$ and \Rev{$\mu < \sqrt{mn}/(r\sqrt{s})$}.  Let $b = \A(X_0)$, then $X_0$ is the only matrix in the set $\LS_{m,n}(r,s,\mu)$ satisfying $\A(X) = b$.
\end{theorem}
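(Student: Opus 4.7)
The plan is the classical RIC contradiction argument. Suppose, toward contradiction, that there exists $X_1 \in \LS_{m,n}(r,s,\mu)$ with $X_1 \neq X_0$ and $\A(X_1) = b$. Linearity gives $\A(X_0 - X_1) = 0$, so if I can show $X_0 - X_1 \in \LS_{m,n}(2r, 2s, \mu)$, the lower RIC bound
\begin{equation*}
(1 - \rict_{2r,2s,\mu}(\A))\,\|X_0 - X_1\|_F^2 \;\leq\; \|\A(X_0 - X_1)\|_2^2 \;=\; 0,
\end{equation*}
combined with $\rict_{2r, 2s, \mu}(\A) < 1$, forces $X_0 = X_1$, a contradiction.

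Writing $X_i = L_i + S_i$ from the defining $\LS_{m,n}(r, s, \mu)$ decomposition, the canonical split $X_0 - X_1 = (L_0 - L_1) + (S_0 - S_1)$ has $\rank(L_0 - L_1) \leq 2r$ and $\|S_0 - S_1\|_0 \leq 2s$ by subadditivity of rank and support size. The remaining ingredient is the incoherence condition on $L_0 - L_1$ at rank $2r$. Letting $U_0, U_1 \in \mathbb{R}^{m \times r}$ be the left singular-vector matrices of $L_0, L_1$, the column space of $L_0 - L_1$ is contained in $V := \mathrm{col}(U_0) + \mathrm{col}(U_1)$, of dimension at most $2r$. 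Choosing an orthonormal basis of $V$ of the form $[\,U_0 \mid U_{1\perp}\,]$, where $U_{1\perp}$ spans $(I - U_0 U_0^T)\,\mathrm{col}(U_1)$, the Pythagorean identity gives $\|P_V e_i\|_2^2 = \|U_0^T e_i\|_2^2 + \|U_{1\perp}^T e_i\|_2^2$. The first summand is at most $\mu r/m$ by the incoherence of $L_0$; if one can establish the analogous bound $\|U_{1\perp}^T e_i\|_2^2 \leq \mu r/m$, then $\|P_V e_i\|_2^2 \leq \mu(2r)/m$, which is exactly the $\mu$-incoherence at rank $2r$. The symmetric argument handles the right singular vectors, and Lemma~\ref{lemma:LS_mu_closed} guarantees we land in a closed set so the RIC in fact applies.

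The hard part will be the bound $\|U_{1\perp}^T e_i\|_2^2 \leq \mu r/m$: orthogonal projection of $\mathrm{col}(U_1)$ onto $\mathrm{col}(U_0)^\perp$ can in general redistribute coherence in nontrivial ways, so a pure subspace-inclusion argument is insufficient. I expect the resolution to draw on the quantitative inner-product bound in Lemma~\ref{lemma:LS_mu_closed}(1), which is precisely the consequence of $\mu < \sqrt{mn}/(r\sqrt{s})$ and is the new ingredient relative to the classical RIP uniqueness arguments for sparse vectors and low-rank matrices. Once the incoherence of $L_0 - L_1$ is certified at rank $2r$, the one-line display above closes the theorem.
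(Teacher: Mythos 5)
Your skeleton is exactly the paper's: assume a second solution $X_1\in\LS_{m,n}(r,s,\mu)$, set $Z=X_0-X_1$, note $\A(Z)=0$, place $Z$ in $\LS_{m,n}(2r,2s,\mu)$, and contradict the lower RIC inequality $0=\|\A(Z)\|_2^2\geq(1-\Delta_{2r,2s,\mu})\|Z\|_F^2>0$. The rank and support doublings are immediate. The one step you do not complete --- certifying that $L_0-L_1$ is $\mu$-incoherent at rank $2r$ --- is precisely where the paper invokes its subadditivity lemma (Lemma~\ref{lemma:ls_mu_additive}), and your proposal leaves it as a conditional (``if one can establish\dots'') together with a speculation that does not pan out: Lemma~\ref{lemma:LS_mu_closed}(1) bounds the cross term $|\langle L,S\rangle|$ between an incoherent low-rank matrix and a sparse matrix, and says nothing about the leverage scores of the column space of a sum of two incoherent low-rank matrices, so it cannot supply the bound $\|U_{1\perp}^Te_i\|_2^2\leq\mu r/m$ your orthogonalized basis requires. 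This is a genuine gap rather than a deferrable detail, since it is the only content of the theorem beyond the one-line RIC computation.

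The paper's route avoids your basis $[\,U_0\mid U_{1\perp}\,]$ altogether. Writing $U$ for the (at most $2r$) left singular vectors of $L_0-L_1$, Lemma~\ref{lemma:ls_mu_additive} uses $\mathrm{col}(U)\subseteq\mathrm{col}([U_0\,U_1])$ to assert
\begin{equation*}
\left\|U^Te_i\right\|_2^2\;\leq\;\left\|[U_0\,U_1]^Te_i\right\|_2^2\;=\;\left\|U_0^Te_i\right\|_2^2+\left\|U_1^Te_i\right\|_2^2\;\leq\;\frac{2\mu r}{m}\;=\;\frac{\mu(2r)}{m},
\end{equation*}
which is exactly $\mu$-incoherence at the doubled rank (the factor of two is absorbed by $2r$), with the symmetric computation for the right singular vectors. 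Note that the hypothesis $\mu<\sqrt{mn}/(r\sqrt{s})$ plays no role in this membership step; it is there so that the assumed bound on $\Delta_{2r,2s,\mu}(\A)$ is the one actually furnished by Theorem~\ref{thm:rip_ls} over a closed set, not to control any cross term. Your instinct that a pure subspace-inclusion argument is delicate is aimed at the right place --- the first inequality above needs justification, since $[U_0\,U_1][U_0\,U_1]^T$ is not an orthogonal projector when the two column spaces overlap --- but the paper's resolution is this concatenation bound, not the inner-product estimate you point to, and as written your argument does not establish the theorem.
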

\begin{proof}
	Assume, on the contrary, that there exists a matrix $X\in\LS_{m,n}(r,s,\mu)$ such that $\A(X)=b$ and $X\neq X_0$. Then $Z:= X_0 - X$ is a non-zero matrix for which $\A(Z) = 0$ and $Z\in\LS_{m,n}(2r, 2s,\mu)$ \Rev{by the subadditivity property of $\LS_{m,n}(r,s,\mu)$ sets in Lemma~\ref{lemma:ls_mu_additive}}. But then by the RIC we would have $0 = \left\|\A(Z)\right\|^2_2 \geq (1-\Delta_{2r,2s,\mu}) \left\|Z\right\|_F^2 > 0$, which is a contradiction.
\end{proof}

As in compressed sensing and matrix completion, it is in general NP-hard to recover $X_0 = L_0 + S_0\in\mathrm{LS}_{m,n}(r,s,\mu)$ from $\mathcal{A}(X_0)$  for minimal $r,s$ when $p\ll mn$.  This follows from the non-convexity of the feasible set $\LS_{m,n}(r,s,\mu)$.  However, we show that if the linear transformation $\mathcal{A}$ has sufficiently small RIC over the set $\LS_{m,n}(r,s,\mu)$, which requires $\mu < \sqrt{mn}/(r\sqrt{s})$, then the solution can be obtained with computationally tractable methods such as by solving the semidefinite program
\begin{equation}
	\min_{X = L + S \in \mathbb{R}^{m\times n}} \left\|L\right\|_* + \lambda \left\|S\right\|_1, \qquad \text{s.t.}\quad \left\|\mathcal{A}(X) - b\right\|_2\leq \eb, \label{eq:convex}
\end{equation}
where $\|\cdot\|_*$ is the Schatten $1$-norm and $\|\cdot\|_1$ is the sum of the absolute value of the entries\footnote{Our use of $\|\cdot\|_1$ as the sum of the modulus of the entries of a matrix differs from the vector induced $1$-norm of a matrix.} and $\eb$ is the model misfit.  

\begin{theorem}[Guaranteed recovery by the convex relaxation]\label{thm:convex_recovery}
Let $b = \A(X_0)$ and suppose that $r, s \geq 1$ and \Rev{$\mu < \sqrt{mn}/(4r\sqrt{2s})$} are such that the restricted isometry constant $\rict_{4r,2s,2\mu}(\mathcal{A})\leq \Rev{\frac{1}{7} - 2\rcc}$ where $\rcc := \mu \frac{4r \sqrt{2s}}{\sqrt{mn}}$. Let $X^* = L^* + S^*$ be the solution of \eqref{eq:convex} with $\lambda = \sqrt{\Rev{2}r/s}$, then $\|X^* - X_0\|_F \leq 42\eb$.
\end{theorem}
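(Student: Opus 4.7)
The plan is to follow the standard RIP-based convex recovery analysis, extended to the low-rank plus sparse setting with the coupling between the two components controlled via the inner product bound from Lemma~\ref{lemma:LS_mu_closed}. Setting $H = X^* - X_0 = H_L + H_S$ with $H_L = L^* - L_0$ and $H_S = S^* - S_0$, the feasibility of both $X^*$ and $X_0$ together with the triangle inequality gives $\|\A(H)\|_2 \leq 2\eb$.

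First I would derive a cone condition from the optimality of $X^*$. Let $T$ denote the tangent space at $L_0$ to the rank-$r$ variety and $\Omega = \supp(S_0)$. The triangle inequality combined with the decomposition identities $\|L_0 + W\|_* = \|L_0\|_* + \|W\|_*$ for $W \in T^\perp$ and $\|S_0 + N\|_1 = \|S_0\|_1 + \|N\|_1$ for $\supp(N) \subseteq \Omega^c$, together with $\|L^*\|_* + \lambda\|S^*\|_1 \leq \|L_0\|_* + \lambda\|S_0\|_1$, yield
\[
\|\Proj_{T^\perp}(H_L)\|_* + \lambda \|\Proj_{\Omega^c}(H_S)\|_1 \leq \|\Proj_T(H_L)\|_* + \lambda \|\Proj_\Omega(H_S)\|_1.
\]

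Next I would decompose $\Proj_{T^\perp}(H_L) = \sum_{i\geq 1} H_L^{(i)}$ into rank-$2r$ SVD blocks and $\Proj_{\Omega^c}(H_S) = \sum_{i\geq 1} H_S^{(i)}$ into disjoint $s$-sparse blocks, both ordered by magnitude, and set $H^{(0)} := \Proj_T(H_L) + H_L^{(1)} + \Proj_\Omega(H_S) + H_S^{(1)}$. The standard tail bounds $\sum_{i\geq 2}\|H_L^{(i)}\|_F \leq \|\Proj_{T^\perp}(H_L)\|_*/\sqrt{2r}$ and $\sum_{i\geq 2}\|H_S^{(i)}\|_F \leq \|\Proj_{\Omega^c}(H_S)\|_1/\sqrt{s}$, combined with the cone condition and the choice $\lambda = \sqrt{2r/s}$, give
\[
\|H - H^{(0)}\|_F \leq \|\Proj_T(H_L)\|_F + \|\Proj_\Omega(H_S)\|_F.
\]
The low-rank part of $H^{(0)}$ has rank at most $4r$ and its sparse part has support size at most $2s$; verifying that its incoherence parameter is at most $2\mu$ places $H^{(0)} \in \LS_{m,n}(4r, 2s, 2\mu)$, so that $\rict_{4r, 2s, 2\mu}(\A)$ applies.

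The principal obstacle, and the main step, is turning $\|\A(H)\|_2 \leq 2\eb$ into an upper bound on $\|H^{(0)}\|_F$. Expanding $\|\A(H)\|_2^2$ around $H^{(0)}$ and the tail, the RIP lower bound $(1 - \rict_{4r,2s,2\mu})\|H^{(0)}\|_F^2 \leq \|\A(H^{(0)})\|_2^2$ combined with polarization-based bounds on $\langle \A(H^{(0)}), \A(H_L^{(i)} + H_S^{(i)})\rangle$ for $i \geq 2$ yields a chain of inequalities relating $\|\A(H)\|_2$, $\|H^{(0)}\|_F$, and the tail. Crucially, expressing $\|H^{(0)}\|_F^2$ in terms of the squared Frobenius norms of its low-rank part $L' = \Proj_T(H_L) + H_L^{(1)}$ and sparse part $S' = \Proj_\Omega(H_S) + H_S^{(1)}$ introduces the cross term $\langle L', S'\rangle$; Lemma~\ref{lemma:LS_mu_closed} applied with parameters $(4r, 2s, 2\mu)$ bounds this by $\gamma\|L'\|_F\|S'\|_F$ with $\gamma = \mu\cdot 4r\sqrt{2s}/\sqrt{mn}$, which accounts precisely for the $-2\gamma$ slack in the hypothesis $\rict_{4r,2s,2\mu}(\A) \leq 1/7 - 2\gamma$. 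Collecting these estimates bounds $\|H^{(0)}\|_F$ by a small constant multiple of $\eb$, and combining with the tail bound from the previous step yields $\|H\|_F \leq 42\eb$.
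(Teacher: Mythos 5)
Your overall architecture matches the paper's: split the residual into low-rank and sparse parts, extract a cone condition from optimality, decompose the tails into blocks with decaying Frobenius norm, and close the argument by applying the RIP to the head terms with the incoherence-based inner-product bound absorbing the cross term between the low-rank and sparse pieces. However, there is one genuine gap at the decomposition step. You propose to split $\Proj_{T^\perp}(H_L)$ into \emph{rank-$2r$ SVD blocks}, as in the classical Recht--Fazel--Parrilo argument. That does not work here, because the restricted isometry constant $\rict_{4r,2s,2\mu}$ is only defined over $\LS_{m,n}(4r,2s,2\mu)$, i.e.\ over matrices whose low-rank component is $2\mu$-\emph{incoherent}. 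The SVD blocks of the residual $\Proj_{T^\perp}(H_L)$ carry no incoherence guarantee whatsoever (they could be supported on very few rows and columns), so the quantities $H_L^{(i)}+H_S^{(i)}$ for $i\ge 1$ need not lie in any $\LS_{m,n}(\cdot,\cdot,2\mu)$ set, and neither the RIP lower bound on $H^{(0)}$ nor the polarization bounds on $\langle \A(H^{(0)}),\A(H_L^{(i)}+H_S^{(i)})\rangle$ are licensed. The paper's proof avoids this by replacing the SVD with the decomposition of Lemma~\ref{lemma:decomposing_RL}, which expands the low-rank tail in a maximally incoherent (Hadamard-type) rank-one basis so that every block lies in $\LS_{m,n}(M_r,0,1)$ while still satisfying the usual energy-decay estimate \eqref{eq:appendix-convex_RcL_decay}; this is precisely the ``important exception'' relative to the standard argument that the paper flags, and your proposal is missing it.

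Two smaller points. First, since the theorem assumes $b=\A(X_0)$ exactly, $X_0$ has zero constraint residual and feasibility of $X^*$ gives $\|\A(H)\|_2\le\eb$ directly; your bound $\|\A(H)\|_2\le 2\eb$ is the noisy-data version and would double the final constant to $84\eb$, contradicting the claimed $42\eb$. Second, the claim that the head term $\Proj_T(H_L)+H_L^{(1)}$ is $2\mu$-incoherent is itself nontrivial (the tangent space at $L_0$ contains matrices with arbitrary column spaces), and you should not leave it as a one-line ``verify''; the paper routes this through Lemma~\ref{lemma:R0L}, which asserts $R_0^L\in\LS_{m,n}(2r,0,\mu)$ as part of its conclusion.
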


Alternatively, $X_0$ can be obtained from its compressed measurements $\mathcal{A}(X_0)$ by iterative gradient descent methods that are guaranteed to converge to a global minimizer of the non-convex optimization problem
\begin{equation}
	\min_{X = L + S \in \mathbb{R}^{m\times n}} \left\|\mathcal{A}(X) - b\right\|_2, \qquad \text{s.t.}\quad X\in\mathrm{LS}_{m,n}(r,s,\mu).\label{eq:non-convex}
\end{equation}
We introduce two natural extensions of the simple yet effective Normalized Iterative Hard Thresholding (NIHT) for compressed sensing \cite{Blumensath2010normalized} and matrix completion \cite{Tanner2013normalized} algorithms, here called NIHT and Normalized Alternative Hard Thresholding (NAHT) for low-rank plus sparse matrices, Algorithms \ref{algo:niht} and \ref{algo:naht} respectively.  In both cases we establish  that if the measurement operator has suitably small RICs then NIHT and NAHT provably converge to the global minimum of the non-convex problem formulated in~\eqref{eq:non-convex} and recover $X_0\in\LS_{m,n}(r,s,\mu)$ for which $b = \A(X_0)$.

\begin{algorithm}[t]
	\caption{Normalized Iterative Hard Thresholding (NIHT) for $\LS$ recovery\label{algo:niht}}
	\hspace*{\algorithmicindent} \textbf{Input:}  $b = \A(X_0), \A, r, s$, and termination criteria\\
	\hspace*{\algorithmicindent} \textbf{Set:}  $(L^0,S^0) = \Proj\left(\A^*(b); \, r, s, \mu, \varepsilon \right),\, X^0 = L^0 + S^0, \, j = 0$ \\
	\hspace*{\algorithmicindent}\hspace{2.5em}  $\Omega^0 = \supp(S^0)$ and $U^0$ as the top $r$ left singular vectors of $L^0$
	\begin{algorithmic}[1]
		\While{not converged}
		\State Compute the residual $R^{j} = \A^*\left(\A(X^j) - b\right)$
		\State Compute the stepsize: $\step_j = \left\| \Projarg{U^j}{\Omega^j}{R^j} \right\|^2_F / \left\| \A\left( \Projarg{U^j}{\Omega^j}{R^j} \right)\right\|^2_2$
		\State Set $W^j = X^j - \step_j R^{j}$
		\State Compute $(L^{j+1}, S^{j+1}) = \RPCAarg{r}{s}{\mu}{W^j}{\ep}$ and set $X^{j+1} = L^{j+1} + S^{j + 1}$
		\State Let $\Omega^{j+1} = \supp(S^{j+1})$ and $U^{j+1}$ be the top $r$ left singular vectors of $L^{j+1}$
		\State $j = j +1$
            \EndWhile
    \end{algorithmic}
    \hspace*{\algorithmicindent} \textbf{Output:}  $X^j$
\end{algorithm}

\begin{theorem}[Guaranteed recovery by NIHT]\label{thm:niht_convergence}
	Suppose that $r, s \in \mathbb{N}$ and \mbox{\Rev{$\mu<\sqrt{mn}\big/\left(3r \sqrt{3s}\right)$}} are such that the restricted isometry constant $\Delta_3  := \Delta_{3r,3s,\mu}(\cA)< \frac{1}{5}$. Then NIHT applied to $b = \cA(X_0)$ as described in Algorithm~\ref{algo:niht} will linearly converge to $X_0$ as
	\begin{equation}
		\left\| X^{j+1} - X_0 \right\|_F \leq \Rev{\frac{4\rict_3}{1-\rict_3}  \left\| X^{j} - X_0 \right\|_F + \epp,}
	\end{equation} 
	within the precision of $\epp$, where $\epp$ is the accuracy of the Robust PCA oblique projection that performs projection on the set of incoherent low-rank plus sparse matrices $\LS_{m,n}(r,s,\mu)$.
\end{theorem}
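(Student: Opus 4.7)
The plan is to follow the Normalized Iterative Hard Thresholding template from sparse recovery~\cite{Blumensath2010normalized} and matrix completion~\cite{Tanner2013normalized}, combining the approximate best-in-$\LS$ property of the RPCA projection step with an RIC-based control of the gradient step. The distinctive ingredient for $\LS_{m,n}(r,s,\mu)$ is a joint rank-$3r$, sparsity-$3s$ subspace that must preserve the incoherence level $\mu$, which is exactly why the hypothesis $\mu<\sqrt{mn}/(3r\sqrt{3s})$ is imposed: via Lemma~\ref{lemma:LS_mu_closed} it guarantees that the RIC $\Delta_3$ acts meaningfully on the enlarged set.

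First I would introduce a joint subspace $T\subseteq\mathbb{R}^{m\times n}$ consisting of all $M+N$ whose low-rank part has column and row spaces in $\mathrm{span}(U^j\cup U^{j+1}\cup U_0)$ and $\mathrm{span}(V^j\cup V^{j+1}\cup V_0)$, and whose sparse part is supported on $\Omega^j\cup\Omega^{j+1}\cup\supp(S_0)$. Iterating the subadditivity property (Lemma~\ref{lemma:ls_mu_additive}), every $M\in T$ lies in $\LS_{m,n}(3r,3s,\mu)$, so $\Delta_3$ controls $\A$ on $T$. In particular $X^j$, $X^{j+1}$, $X_0$, and the error $e:=X^j-X_0$ all lie in $T$.

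Using $b=\A(X_0)$ the residual simplifies to $R^j=\A^*\A e$, so $W^j-X_0=e-\step_j\A^*\A e$. Because $X^{j+1}-X_0\in T$, the triangle inequality gives
\begin{equation*}
	\|X^{j+1}-X_0\|_F \leq \|P_T(X^{j+1}-W^j)\|_F + \|P_T(W^j-X_0)\|_F.
\end{equation*}
Splitting $W^j$ along $T\oplus T^\perp$, a Pythagorean comparison together with the $\epp$-approximate best-$\LS$ property of the RPCA oracle (with $X_0$ as a feasible competitor) yields $\|P_T(X^{j+1}-W^j)\|_F\leq\|P_T(W^j-X_0)\|_F+\epp$. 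For the second term, $e\in T$ gives $P_T(W^j-X_0)=(1-\step_j)e-\step_j(P_T\A^*\A|_T-I|_T)e$. The RIC on $T$ yields $\|P_T\A^*\A|_T-I|_T\|_{\mathrm{op}}\leq\Delta_3$, and the Rayleigh-quotient step-size rule on the $(U^j,\Omega^j)$-tangent space (whose RIC is at most $\Delta_3$ by monotonicity in $r,s$) forces $\step_j\in[(1+\Delta_3)^{-1},(1-\Delta_3)^{-1}]$, so $|1-\step_j|\leq\Delta_3/(1-\Delta_3)$. Combining gives $\|P_T(W^j-X_0)\|_F\leq\tfrac{2\Delta_3}{1-\Delta_3}\|e\|_F$, and therefore $\|X^{j+1}-X_0\|_F\leq\tfrac{4\Delta_3}{1-\Delta_3}\|e\|_F+\epp$, a contraction whenever $\Delta_3<1/5$.

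The main obstacle is the incoherence bookkeeping in the joint-tangent-space step: one must check that the union of three rank-$r$ singular-vector systems and three size-$s$ supports still sits inside $\LS_{m,n}(3r,3s,\mu)$ with the \emph{same} $\mu$, which is where $\mu<\sqrt{mn}/(3r\sqrt{3s})$ is essential via Lemma~\ref{lemma:LS_mu_closed}. A secondary care point is propagating the RPCA accuracy $\epp$ through the Pythagorean comparison so that it appears additively rather than multiplicatively; this requires absorbing a cross term of order $\epp\|W^j-X_0\|_F$ by the triangle inequality, or equivalently by a sufficiently loose inductive hypothesis on $\|W^j-X_0\|_F$.
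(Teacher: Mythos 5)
Your proposal follows essentially the same route as the paper's proof: the same joint rank-$3r$/sparsity-$3s$ subspace built from the union of the singular-vector systems and supports of $X_0$, $X^j$, $X^{j+1}$ (the paper's frame $Q$ in Lemma~\ref{lemma:Ainp23}), the same spectral bound on $I-\step_j A_Q^*A_Q$ combined with the stepsize bounds $\step_j\in[(1+\rict_1)^{-1},(1-\rict_1)^{-1}]$, and the same use of $X_0$ as a feasible competitor for the $\epp$-approximate RPCA projection; the only structural difference is that you run a triangle inequality on $T$-projections where the paper expands $\|W^j-X^{j+1}\|_F^2$ and isolates the inner product $\langle W^j-X_0, X^{j+1}-X_0\rangle$.

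The one step that does not hold as you stated it is the intermediate inequality $\|P_T(X^{j+1}-W^j)\|_F\leq\|P_T(W^j-X_0)\|_F+\epp$. The oracle gives $\|W^j-X^{j+1}\|_F\leq\|W^j-X_0\|_F+\epp$ in the \emph{full} space, and after the Pythagorean split (valid since $X^{j+1},X_0\in T$) the surviving cross term is $2\epp\|W^j-X_0\|_F$, not $2\epp\|P_T(W^j-X_0)\|_F$; since the former dominates the latter, taking square roots does not yield an additive $\epp$. You flag this yourself, and it is repairable by exactly the device the paper uses: first bound $\|W^j-X_0\|_F\leq\tfrac{\eta}{2}\|X^j-X_0\|_F$ with $\eta:=2\bigl(\tfrac{1+\rict_3}{1-\rict_1}-1\bigr)$, so the cross term becomes $\eta\,\epp\,\|X^j-X_0\|_F$, and then observe the exact identity
\begin{equation*}
	\Bigl(\tfrac{\eta}{2}\|X^j-X_0\|_F\Bigr)^2+\eta\,\epp\,\|X^j-X_0\|_F+\epp^2=\Bigl(\tfrac{\eta}{2}\|X^j-X_0\|_F+\epp\Bigr)^2,
\end{equation*}
which is the paper's completion of the square in \eqref{eq:niht_conv4b}. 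With that repair your chain gives $\|X^{j+1}-X_0\|_F\leq\eta\|X^j-X_0\|_F+\epp$ and $\eta\leq\tfrac{4\rict_3}{1-\rict_3}<1$ for $\rict_3<1/5$, matching the theorem. Your incoherence bookkeeping (iterating Lemma~\ref{lemma:ls_mu_additive} so the union subspace sits in $\LS_{m,n}(3r,3s,\mu)$ with the same $\mu$) is correct and is exactly why the hypothesis $\mu<\sqrt{mn}/(3r\sqrt{3s})$ appears.
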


\begin{theorem}[Guaranteed recovery by NAHT]\label{thm:naht_convergence}
	Suppose that $r,s\in\mathbb{N}$ and \Rev{$\mu < \sqrt{mn} \big/ \left( 3r \sqrt{3s} \right)$} are such that the restricted isometry constant $\rict_3  := \rict_{3r,3s,\mu}(\cA)< \frac{1}{9} - \rcc_2$ where $\rcc_2:= \mu \frac{2r \sqrt{2s}}{\sqrt{mn}}$. Then NAHT applied to $b = \cA(X_0)$ as described in Algorithm \ref{algo:naht} will linearly converge to $X_0 = L_0 + S_0$ as
	\begin{equation}
		\left\|L^{j+1} - L_0 \right\|_F + \left\|  S^{j+1} - S_0 \right\|_F \leq  \frac{6\rict_3 + \frac{9}{8}\rcc_2 }{1 - 3\rict_3 - \frac{9}{8}\rcc_2 } \left(  \left\| L^j - L_0 \right\|_F + \left\| S^j - S_0 \right\|_F \right).
	\end{equation}
\end{theorem}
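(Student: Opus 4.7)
\textbf{Proof plan for Theorem \ref{thm:naht_convergence}.}
The plan is to mirror the single-iterate contraction analysis that appears in the NIHT proof (Theorem~\ref{thm:niht_convergence}) but to carry it out separately for the $L$- and $S$-updates of NAHT and then couple them. Writing $E_L^j = L^j - L_0$ and $E_S^j = S^j - S_0$, the first task is to identify the matrix spaces in which the various differences live so that the RIC $\rict_3 = \rict_{3r,3s,\mu}$ can be applied. Because $L^j$, $L^{j+1}$ and $L_0$ are each rank at most $r$, the matrix $L^j+L^{j+1}-2L_0 + S^j - S_0$ that will arise naturally in the analysis lies in $\LS_{m,n}(3r,3s,\mu)$ (after absorbing sums of rank-$r$ and $s$-sparse pieces and invoking the subadditivity of $\LS_{m,n}$ from Lemma~\ref{lemma:ls_mu_additive}). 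From that point the derivation parallels the standard NIHT argument: for the $L$-step, writing the normalized gradient update of $L$ followed by the oblique rank-$r$ projection and using the near-isometry of $\A$ on the relevant rank-$3r$ plus $3s$-sparse subspace, one obtains an inequality of the form
\begin{equation}
\|E_L^{j+1}\|_F \le \frac{2\rict_3}{1-\rict_3}\|E_L^j\|_F + \frac{2\rict_3}{1-\rict_3}\|E_S^j\|_F + \text{cross-coupling terms},
\end{equation}
and an analogous bound for $\|E_S^{j+1}\|_F$ after the sparse hard-thresholding step, with $E_L^{j+1}$ rather than $E_L^j$ appearing on the right because NAHT uses the just-updated low-rank factor.

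The second ingredient is to control the cross-coupling inner products between low-rank and sparse differences. Here I would invoke Lemma~\ref{lemma:LS_mu_closed}(1): any difference of two rank-$r$ incoherent matrices has rank at most $2r$ with doubled incoherence constant, and any difference of two $s$-sparse matrices is $2s$-sparse, so
\begin{equation}
\bigl|\langle E_L^j, E_S^j\rangle\bigr| \le \mu\,\frac{2r\sqrt{2s}}{\sqrt{mn}}\,\|E_L^j\|_F\|E_S^j\|_F = \rcc_2\,\|E_L^j\|_F\|E_S^j\|_F.
\end{equation}
This is exactly where the parameter $\rcc_2$ enters the final rate, and it is what forces the incoherence requirement $\mu < \sqrt{mn}/(3r\sqrt{3s})$ (which ensures both that $\LS_{m,n}(3r,3s,\mu)$ is a closed set so the RIC $\rict_3$ is well-defined and that $\rcc_2<1$). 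I would use this bound both to pass between $\|E_L^j+E_S^j\|_F^2$ and $\|E_L^j\|_F^2+\|E_S^j\|_F^2$ (via the parallelogram/inner-product expansion) and to handle the residual $\A^\ast\A(E_L^j+E_S^j)$ restricted alternately to a low-rank or a sparse subspace, because after restriction the cross terms between $E_L$ and $E_S$ survive and must be bounded pointwise.

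With the two one-step bounds in hand, the final step is bookkeeping: substitute the $L$-step bound into the $S$-step bound so that everything is expressed in $\|E_L^j\|_F$ and $\|E_S^j\|_F$, add the two, and collect terms. A careful accounting (tracking the factors of $2$ that come from the doubled incoherence $2\mu$, the factor $\tfrac{1}{1-\rict_3}$ from the normalized step size, and the three appearances of the cross-coupling $\rcc_2$) produces the stated constants, yielding
\begin{equation}
\|E_L^{j+1}\|_F + \|E_S^{j+1}\|_F \le \frac{6\rict_3 + \tfrac98\rcc_2}{1-3\rict_3-\tfrac98\rcc_2}\bigl(\|E_L^j\|_F + \|E_S^j\|_F\bigr).
\end{equation}
The hypothesis $\rict_3 < \tfrac19 - \rcc_2$ is exactly what makes this ratio strictly less than one, so linear convergence follows. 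The main obstacle I expect is not any single inequality but the precise accounting of how the NAHT updates propagate errors from $E_L^j \to E_L^{j+1}$ and, subsequently, from $(E_L^{j+1},E_S^j)\to E_S^{j+1}$: one must carefully track which cross terms arise at which stage, and show that the subadditivity and incoherence bounds of Lemma~\ref{lemma:LS_mu_closed} can be applied (so that all relevant differences remain in a closed $\LS$-set on which $\rict_3$ is meaningful) without accumulating a worse constant than $\tfrac98$ on $\rcc_2$.
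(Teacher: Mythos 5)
Your plan follows essentially the same route as the paper's proof: a one-step bound for the low-rank update via optimality of the rank-$r$ projection, an analogous bound for the sparse update in which $\|L^{j+1}-L_0\|_F$ (not $\|L^j-L_0\|_F$) appears on the right, cross terms controlled by the incoherence-based correlation bound (the paper packages this, together with the RIC, as Lemma~\ref{lemma:Ainp1} with coefficient $\rho_2 = \rict_2 + \frac{2\rcc_2}{1-\rcc_2^2}$), and a final summation and rearrangement giving a contraction factor that is below one exactly when $\rict_3 < \frac{1}{9}-\rcc_2$. One small correction: by Lemma~\ref{lemma:ls_mu_additive} the sum of two matrices in $\LS_{m,n}(r,s,\mu)$ lies in $\LS_{m,n}(2r,2s,\mu)$ with the \emph{same} incoherence parameter $\mu$ (the rank doubles, not $\mu$), which is why $\rcc_2 = \mu\frac{2r\sqrt{2s}}{\sqrt{mn}}$ carries $2r$ and $2s$ rather than $2\mu$.
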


Note that the projection used in computing the stepsize is defined as $\Projarg{U^j}{\Omega^j}{R^j} := P_{U^j} R^j + \mathbbm{1}_{\Omega^j} \circ(R^j - P_{U^j} R^j)$, where $P_{U^j} := U^j \left(U^j\right)^*$, $\mathbbm{1}_{\Omega^j}$ is a matrix with ones at indices $\Omega^j$, and $\circ$ denotes the entry-wise Hadamard product. This corresponds to first projecting the left singular vectors of $R^j$ on the subspace spanned by columns of $U^j$, \Rev{which makes the incoherence of the resulting matrix bounded by $\mu$,} and then setting entries at indices $\Omega^j$ to be equal to the entries of $R^j$ at indices $\Omega^j$. One can repeat this process to achieve better more precise projection of $R^j$ in the low-rank plus sparse matrix set defined by $\left(U^j, \Omega^j\right)$.

The hard thresholding projection in Algorithm \ref{algo:niht} is performed by computing Robust PCA which is solved to an accuracy proportional to $\epp$. The Robust PCA projection of a matrix $W\in\pR^{m\times n}$ on the set of $\LS_{m,n}(r,s,\mu)$ with precision $\ep$ returns a matrix $X\in\LS_{m,n}(r,s,\mu)$ such that
\begin{equation}
	X \leftarrow \RPCAarg{r}{s}{\mu}{W}{\ep} \qquad \mathrm{s.t.} \qquad \left\|X- W_{\mathrm{rpca}}\right\|_F \leq \ep, \label{eq:rpca_imprecise}
\end{equation}
where $W_\mathrm{rpca} := \argmin_{Y\in\LS_{m,n}(r,s,\mu)} \|Y- W\|_F$ is the optimal projection of the matrix $W$ on the set $\LS_{m,n}(r,s,\mu)$, \Rev{which can be computed by a number of efficient Robust PCA algorithms, such as the Alternating Projection algorithm (AltProj) \cite{Netrapalli2014provable} or the Accelerated Alternating Projection algorithm (AccAltProj) by \cite{Cai2019accelerated}, which have high robustness in practice and provable global linear convergence when $\alpha = \bigO \left( 1 / \left( \mu r \right) \right)$ and $\alpha = \bigO \left( 1 / \left( \mu r^2 \right) \right)$ respectively, where $s = \alpha^2 mn$.}

\begin{algorithm}[t]
	\caption{Normalized Alternating Hard Thresholding (NAHT) for $\LS$ recovery \label{algo:naht}}
	\hspace*{\algorithmicindent} \textbf{Input:}  $b = \A(X_0), \A, r, s$, and termination criteria\\
	\hspace*{\algorithmicindent} \textbf{Set:}  $(L^0,S^0) = \Proj\left(\A^*(b); \, r, s, \tau, \varepsilon \right),\, X^0 = L^0 + S^0, \, j = 0$ \\
	\hspace*{\algorithmicindent}\hspace{2.5em}  $\Omega^0 = \supp(S^0)$ and $U^0$ as the top $r$ left singular vectors of $L^0$
	\begin{algorithmic}[1]
		\While{not converged}
		\State Compute the residual $R^{j}_L = \A^*\left(\A(X^j) - b\right)$
		\State Compute the stepsize $\step_j^L = \left\| \Projarg{U^j}{\Omega^j}{R^j} \right\|^2_F \Big/ \left\| \A\left( \Projarg{U^j}{\Omega^j}{R^j} \right)\right\|^2_2$
		\State Set $V^j = L^j - \step_j^L R^{j}_L$
		\State Set $L^{j+1} = \mathrm{HT}(V^j;\, r\Rev{, \mu})$ and let $U^{j+1}$ be the left singular vectors of $L^{j+1}$ 
		\State Set $X^{j+\frac{1}{2}} = L^{j+1} + S^{j}$
		\State Compute the residual $R^{j}_S = \A^*\left(\A(X^{j+\frac{1}{2}}) - b\right)$
		\State Compute the stepsize $\step_j^S = \left\| \Projarg{U^{j+1}}{\Omega^j}{R^j} \right\|^2_F \Big/ \left\| \A\left( \Projarg{U^{j+1}}{\Omega^j}{R^j} \right)\right\|^2_2$
		\State Set $W^j = S^j - \step_j^S R^{j}_S$
		\State Set $S^{j+1} = \mathrm{HT}(W^j;\, s)$ and let $\Omega^{j+1} = \supp(S^{j+1})$
		\State Set $X^{j+1} = L^{j+1} + S^{j+1}$
		\State $j = j +1$
            \EndWhile
    \end{algorithmic}
    \hspace*{\algorithmicindent} \textbf{Output:}  $X^j = L^j + S^j$
\end{algorithm}

\subsection{Relation to prior work}

It is well known that the low-rank plus sparse matrix decomposition solved by Robust PCA does not need to have a unique solution without further constraints, such as the singular vectors of the low-rank component being uncorrelated with the canonical basis as quantified by the incoherence condition \cite{Candes2009exact,Recht2010guaranteed} with parameter $\mu$
\begin{gather}
	\label{eq:coherence}
	\max_{i\in \left\{1,\ldots, m\right\}} \left\| U^* e_i \right\|_{2} \leq \sqrt{\frac{\mu r}{m}}, \qquad \max_{i\in \left\{1,\ldots, n\right\}} \left\| V^*f_i \right\|_{2} \leq \sqrt{\frac{\mu r}{n}},
\end{gather}
where $U\in \mathbb{R}^{m\times r}$, $V \in \mathbb{R}^{n\times r}$ are the first $r$ left and the right singular vectors of $L$ respectively, $e_i \in \mathbb{R}^{m}, f_i\in\mathbb{R}^{n}$ are the canonical basis vectors. The incoherence condition for small values of $\mu$ ensures that the left and the right singular vectors are well spread out and not sparse. \Rev{It is therefore sensible to expect that the problem of recovering $X_0\in\LS_{m,n}(r,s,\mu)$ from subsampled measurements should obey the same conditions. The incoherence assumption is directly assumed in the convergence analysis of NAHT and the convex recovery is assumed where we require $\mu < \bigO(\sqrt{mn} / (r\sqrt{s}))$ which is equivalent to the best known recovery bounds in Robust PCA \cite{Hsu2011, Netrapalli2014provable}. The incoherence assumption is also implicitly used in the convergence analysis of NIHT in the Robust PCA projection step in Algorithm~\ref{algo:niht}, Line~$5$, the solution of which is dependent on the incoherence of $L$.}

The results presented here extend the well developed literature on compressed sensing and matrix completion/sensing \cite{Eldar2012compressed,Foucart2013a} to the setting of low-rank plus sparse matrices as defined in Definition~\ref{def:ls}. These foundational RIC bound results allow for further extension to other non-convex algorithms, such as \cite{Waters2011sparcs}, further model based constraints as in \cite{Baraniuk2010model} and other additive models.

The recovery result by convex relaxation in Theorem \ref{thm:convex_recovery} controls the measurement error and/or model mismatch $\eb$. In the proof of NIHT convergence in Theorem \ref{thm:niht_convergence} we consider exact measurements but we control the error of the Robust PCA projection which is assumed to be solved only within prescribed precision $\ep$. The convergence result of NAHT in Theorem \ref{thm:naht_convergence} alternates between projecting of the low-rank and the sparse component. The non-convex algorithms are also stable to error $\eb$, but we omit the stability analysis for clarity in the proofs. 

\Rev{Theorem~\ref{thm:convex_recovery},~\ref{thm:niht_convergence}, and~\ref{thm:naht_convergence} also provably solve Robust PCA when $\A$ is chosen to be the identity and $\mu < \sqrt{mn}/(r\sqrt{s})$ which translates to the optimal scaling in terms of the number of corruptions $\alpha = \bigO\left(1 / (\mu r) \right)$, where $s = \alpha^2 mn$, but without the need of requiring that the fraction of the sparse corruptions in every column and row is bounded by $\alpha$.}

\section{Restricted Isometry Constants for $\LS_{m,n}\left(r, s,\mu \right)$\label{sec:ric}}

This section presents a proof of Theorem~\ref{thm:rip_ls}, that linear maps $\A:\pR^{m\times n}\rightarrow \pR^{p}$ sampled from a class of probability distributions obeying concentration of measure and large deviation inequalities, have bounded RIC for sets of low-rank plus sparse matrices with bounded energy as defined in  Definition \ref{def:ric}.  More precisely, that the RIC of $\A$ remains bounded independent of dimension once $p\ge \mathcal{O}\left(\left(r(m+n-r) + s\right)\log\left( \Rev{(1 - \mu^2\frac{r^2 s}{mn})^{-1/2}}\,\frac{mn}{s}\right)\right)$.
Examples of linear maps which satisfy these bounds include 
random Gaussian matrices and the Fast Johnson-Lindenstrauss transform (FJLT) \cite{Ailon2009the,Krahmer2011new}. 
We extend the method of proof used in the context of sparse vectors by \cite{Baraniuk2008a} and its alteration for the low-rank matrix recovery by \cite{Recht2010guaranteed}.

Our proof of Theorem~\ref{thm:rip_ls} follows from proving the alternative form of \eqref{eq:ric_definition} defined without the squared norms by
\begin{equation}
	\left(1-\ric_{r, s,\mu}(\A)\right)\|X\|_F \leq \|\A(X)\|_2 \leq \left(1+\ric_{r, s,\mu}(\A)\right)\|X\|_F, \label{eq:ric_nonsquare}
\end{equation} 
which we denote as $\ric$. The discrepancy between \eqref{eq:ric_nonsquare} and \eqref{eq:ric_definition} is due to \eqref{eq:ric_nonsquare} being more direct to derive and \eqref{eq:ric_definition} allowing for more concise derivation of Theorem~\ref{thm:convex_recovery},~\ref{thm:niht_convergence}, and \ref{thm:naht_convergence}, \Rev{but the two definitions are related up to a multiplicative constant\footnote{\Rev{The constant $\ric$ satisfiying the inequalities in \eqref{eq:ric_nonsquare} also implies $\left(1-\ric\right)^2\|X\|^2_F \leq \|\A(X)\|^2_2 \leq \left(1+\ric\right)^2\|X\|^2_F, \label{eq:ric_equivalent}$ which in turn ensures that $\rict$ in Definition~\ref{eq:ric_definition} is $\rict = 2\ric - \ric^2\in[0,1]$ when~$\ric \in [0,1]$.}}.}

The proof of Theorem~\ref{thm:rip_ls} begins with the derivation of an RIC for a single subspace $\Sigma_{m,n}(V,W,T,\mu)$ of $\LS_{m,n}(r,s,\mu)$ when the column space of $\Col{L}$ is restricted in the subspace $V$, the row space $\Col{L^T}$ in the subspace $W$, and the sparse component $S$ is in the subspace $T$,
\begin{align} 
	\Sigma_{m,n} \left(V, W, T, \cohc\right) = \left\{ 
		X = L+S\in\bR^{m\times n}:\,
		\begin{array}{c}
			\Col{L}\subseteq V,\, \Col{L^T}\subseteq W, \\
			\supp \left( S \right) \subseteq T, \\
			\forall i \in [m]: \, \left\|  \mathrm{P}_V e_i \right\|_2 \leq \sqrt{\frac{\mu r}{m}}, \\
			\forall i \in [n]: \, \left\| \mathrm{P}_W f_i \right\|_2 \leq \sqrt{\frac{\mu r}{n}}
		\end{array}	
	\right\}, \label{eq:ls_fixed}
\end{align}
\Rev{where $\mathrm{P}_V$ and $\mathrm{P}_W$ denote the orthogonal projection on the subspace $V$ and $W$ respectively, and $e_i \in \bR^m$ and $f_i\in\bR^{n}$ are the canonical basis vectors.}

Following this, we show that the isometry constant of $\A$ is robust to a perturbation of the column and the row subspaces $(V,W)$ of the low-rank component. Finally, we use a covering argument over all possible column and row subspaces $(V,W)$ of the low-rank component and count over all possible sparsity subspaces $T$ of the sparse component to derive an exponentially small probability bound for the event that $\A(\cdot)$ satisfies RIC with constant $\ric$ for sets
\begin{equation}
	\LS_{m,n}(r,s,\mu) = \left\{ \Sigma_{m,n}(V,W,T, \mu)\,: \, V\in\G(m,r), \, W\in\G(n,r), \, T\in\V(mn,s) \right\}, \label{eq:ls_sigma_subset}
\end{equation}
where $\G(m,r)$ is the Grassmannian manifold -- the set of all $r$-dimensional subspaces of $\pR^m$, and $\V(mn,s)$ is the set of all possible supports sets of an $m\times n$ matrix that has $s$ elements. Thus proving RIC for sets of low rank plus sparse matrices given the energy bound on the low-rank component~$L$.

The following result describes the behavior of $\A$ when constrained to a single fixed column and a row space $(V,W)$ and a single sparse matrix space $T$.
\begin{restatable}[RIC for a fixed $\LS$ subspace $\Sigma_{m,n}(V,W,T, \mu)$]{lemma}{lemmaripsingle}
\label{lemma:rip_single}
Let $\A: \pR^{m\times n} \rightarrow \pR^p$ be a nearly isometric random linear map from Definition \ref{def:near_isometry} and $\Sigma_{m,n}(V,W,T,\mu)$ as defined in \eqref{eq:ls_fixed} is fixed for some $(V,W), T$ and \Rev{$\mu < \sqrt{mn}/ (r \sqrt{s})$}.
Then for any $\ric\in(0,1)$
\begin{equation}
	\forall X \in \Sigma_{m,n} \left(V, W, T, \mu \right): \quad (1-\ric)\|X\|_F \leq \| \A(X) \| \leq (1+\ric) \| X \|_F,
\end{equation}
with probability at least
\begin{equation}\label{eq:rip_single_prob}
1 - 2 \left(\frac{24}{\ric} \tau \right)^{\dim V \cdot \dim W}  \left(\frac{24}{\ric} \tau \right)^{\dim T} \exp{\left(-\frac{p}{2}\left( \frac{\ric^2}{8} - \frac{\ric^3}{24} \right) \right)},
\end{equation}
\Rev{where $\tau := (1-\mu^2 \frac{r^2 s}{mn})^{-1/2}$.}
\end{restatable}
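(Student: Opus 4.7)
The plan is to adapt the classical covering argument of Baraniuk--Davenport--DeVore--Wakin and Recht--Fazel--Parrilo to the Minkowski-sum structure of $\Sigma_{m,n}(V,W,T,\mu)$, with Lemma~\ref{lemma:LS_mu_closed} supplying the quantitative control on the component Frobenius norms.

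First I would observe that once $V,W,T$ are fixed, all the constraints defining $\Sigma_{m,n}(V,W,T,\mu)$ are linear: the low-rank part $L$ lives in the $(\dim V\cdot\dim W)$-dimensional space of matrices with column space in $V$ and row space in $W$, the sparse part $S$ lives in the $\dim T$-dimensional space of matrices supported on $T$, and the incoherence inequalities involve only the fixed subspaces $V, W$. Hence $\Sigma_{m,n}(V,W,T,\mu)$ is itself a linear subspace of $\pR^{m\times n}$. Applying Lemma~\ref{lemma:LS_mu_closed} to any $X = L+S\in\Sigma_{m,n}(V,W,T,\mu)$ with $\|X\|_F\leq 1$ gives the uniform component bounds $\|L\|_F\leq \tau$ and $\|S\|_F\leq \tau$.

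Next I would cover each component separately. The standard volumetric estimate, that a Frobenius ball of radius $\tau$ in a $d$-dimensional subspace admits a $\delta$-net of cardinality at most $(3\tau/\delta)^d$, with $\delta = \ric/8$ produces an $(\ric/8)$-net $\mathcal{N}_L$ of the $L$-ball of size $|\mathcal{N}_L|\leq (24\tau/\ric)^{\dim V\cdot\dim W}$, and likewise an $(\ric/8)$-net $\mathcal{N}_S$ of size $(24\tau/\ric)^{\dim T}$. I would then apply the near-isometry concentration inequality~\eqref{eq:conc_measure} with tolerance $\varepsilon = \ric/2$ to each of the $|\mathcal{N}_L|\cdot|\mathcal{N}_S|$ points $L'+S'$ and take a union bound; the resulting exponent $\tfrac{p}{2}(\ric^2/8 - \ric^3/24)$ and net-size prefactor reproduce~\eqref{eq:rip_single_prob} exactly.

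Finally I would extend the pointwise bound on the net to all of $\Sigma_{m,n}(V,W,T,\mu)$ by the usual sup-plus-triangle argument. For any $X = L+S$ on the unit Frobenius sphere of $\Sigma_{m,n}(V,W,T,\mu)$, pick the nearest net pair $L'\in\mathcal{N}_L, S'\in\mathcal{N}_S$, so that $\|X - (L'+S')\|_F \leq \ric/4$ and $\|L'+S'\|_F\leq 1+\ric/4$. Because $(L-L')+(S-S')$ still lies in the same linear subspace, setting $A := \sup_{X\in\Sigma_{m,n}(V,W,T,\mu),\,\|X\|_F=1}\|\A(X)\|_2$ and combining $\|\A(X)\|_2\leq \|\A(L'+S')\|_2 + A\cdot\|X-(L'+S')\|_F$ with the pointwise bound $\|\A(L'+S')\|_2\leq \sqrt{1+\ric/2}\,\|L'+S'\|_F$ yields $A(1-\ric/4)\leq \sqrt{1+\ric/2}(1+\ric/4)$, and hence $A\leq 1+\ric$ for $\ric\in(0,1)$; the matching lower bound follows from the analogous subtractive inequality.

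The principal novelty relative to the sparse-only and low-rank-only RIC proofs is the factor $\tau$ in both covering numbers, which is the sole way the incoherence parameter $\cohc$ enters the argument. It arises because the two components need not be near-orthogonal, and Lemma~\ref{lemma:LS_mu_closed} converts their inner-product bound into a uniform inflation factor on $\|L\|_F$ and $\|S\|_F$; this is exactly why $\cohc < \sqrt{mn}/(r\sqrt{s})$ is required so that $\tau$ is finite. I expect this step (correctly threading $\tau$ through the covering numbers while retaining the familiar $(24/\ric)^d$ form) to be the main obstacle; once it is in place the remainder is a routine adaptation of the argument in \cite{Baraniuk2008a, Recht2010guaranteed}.
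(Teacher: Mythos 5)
Your proposal follows essentially the same route as the paper's proof: bound $\|L\|_F,\|S\|_F\leq\tau$ via Lemma~\ref{lemma:LS_mu_closed}, build separate $(\ric/8)$-nets of the two component balls whose product yields the $\left(\frac{24}{\ric}\tau\right)^{\dim V\cdot\dim W}\left(\frac{24}{\ric}\tau\right)^{\dim T}$ prefactor, apply the concentration inequality \eqref{eq:conc_measure} with $\varepsilon=\ric/2$ and a union bound, and finish with the standard sup-plus-triangle extension (your observation that $\Sigma_{m,n}(V,W,T,\mu)$ is a linear subspace, so differences of net points stay in the set, plays the role of the paper's appeal to Lemma~\ref{lemma:ls_mu_additive}). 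The only discrepancy is a low-order constant in the final step, where your chain gives $A\leq\sqrt{1+\ric/2}\,(1+\ric/4)/(1-\ric/4)$, which exceeds $1+\ric$ for $\ric$ very close to $1$; the paper's version of the same step carries an analogous imprecision (it implicitly bounds $\|Q\|_F\leq 1$ for net points near the unit sphere), and both are repaired by a trivial adjustment of the net resolution.
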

The proof follows the same argument as the one for sparse vectors \cite[Lemma 5.1]{Baraniuk2008a} and for low-rank matrices in \cite[Lemma 4.3]{Recht2010guaranteed} \Rev{with the exception of appropriately scaling the Frobenius norm of the two components in relation to the Frobenius norm of their sum}. Our variant of the proof for low-rank plus sparse matrices is presented in~\ref{sec:appendix_lemma} on page~\pageref{lemma:rip_single_proof}.

To establish the impact of a perturbation of the spaces $(U,V)$ on the $\ric$ in Lemma~\ref{lemma:rip_single} we define a metric $\rho(\cdot, \cdot)$ on $\G(D,d)$ as follows
\begin{equation}
	U_1, U_2 \in \G(D,d): \quad  \rho(U_1, U_2) := \| P_{U_1} - P_{U_2} \|. \label{eq:grass_distance}
\end{equation}
The Grassmannian manifold $\G\left(D, d\right)$ combined with distance $\rho(\cdot, \cdot)$ as in \eqref{eq:grass_distance} defines a metric space $\left(\G\left(D, d\right), \rho\left(\cdot, \cdot\right) \right)$, where $P_{U}$ denotes an orthogonal projection associated with the subspace $U$. Let us also denote a set of matrices whose column and row space is a subspace of $V$ and $W$ respectively
\begin{equation}
	(V,W) = \left\{ X:\, \Col{X}\subseteq V,\, \Col{X^T}\subseteq W \right\},
\end{equation}
and $P_{(V,W)}$ is an orthogonal projection that ensures that the column space and row space of $P_{(V,W)}X$ lies within $V$ and $W$. The distance between $\Sigma_1 := \Sigma_{m,n}\left(V_1, W_1, T, \mu\right)$ and $\Sigma_2 :=\Sigma_{m,n}\left(V_2, W_2, T, \mu\right)$ that have a fixed $T$ is given by
\begin{equation}
	\rho\left(\left(V_1, W_1\right), \left(V_2, W_2\right)\right) = \| P_{\left(V_1, W_1\right)} - P_{\left(V_2, W_2\right)} \|. \label{eq:sigma_distance}
\end{equation}

\begin{restatable}[Variation of $\ric$ in RIC in respect to a perturbation of $(V,W)$]{lemma}{lemmavardelta}
\label{lemma:var_delta}
Let $\Sigma_1 := \Sigma_{m,n}(V_1, W_1, T, \mu)$ and $\Sigma_2 :=\Sigma_{m,n}(V_2, W_2, T, \mu)$ be two low-rank plus sparse subspaces with the same fixed subspace~$T$ \Rev{and $\mu < \sqrt{mn}/ (r \sqrt{s})$}. Suppose that for $\ric > 0$, the linear operator $\A$ satisfies
\begin{equation}
	\forall X\in  \Sigma_1: \quad (1-\ric) \| X \|_F \leq \|\A(X)\| \leq (1+\ric)\|X\|_F.
\end{equation}
Then
\begin{equation}
	\forall Y \in  \Sigma_2: \quad (1-\ric') \| Y \|_F \leq \|\A(Y)\| \leq (1+\ric')\|Y\|_F,
\end{equation}
with $\ric' := \ric + \tau \rho\left(\left(V_1, W_1\right), \left(V_2, W_2\right)\right) \left(1+ \ric + \|\A\| \right)$ with $\rho$ as defined in \eqref{eq:grass_distance} \Rev{and \mbox{$\tau := (1-\mu^2 \frac{r^2 s}{mn})^{-1/2}$}}.
\end{restatable}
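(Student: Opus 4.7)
The plan is to transport the near-isometry bound from $\Sigma_1$ onto $\Sigma_2$ by approximating each $Y \in \Sigma_2$ by a neighboring matrix $X \in \Sigma_1$ and paying for the approximation error through the operator norm $\|\A\|$. Given $Y = L_Y + S_Y \in \Sigma_2$, with $L_Y$ having column and row spaces in $(V_2, W_2)$ and $\supp(S_Y) \subseteq T$, I would set
\begin{equation*}
X := P_{(V_1, W_1)} L_Y + S_Y.
\end{equation*}
Since $P_{(V_1,W_1)} L_Y$ has column space in $V_1$ and row space in $W_1$ and $S_Y$ is still supported on $T$, the matrix $X$ lies in $\Sigma_1$ by construction; note that the incoherence conditions defining $\Sigma_1$ are properties of the ambient subspaces $(V_1, W_1)$ rather than of any particular low-rank iterate, so no further verification is needed.

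The next step would be to bound $\|Y - X\|_F$ purely in terms of $\|Y\|_F$. Using that $L_Y$ is fixed by $P_{(V_2, W_2)}$, I rewrite
\begin{equation*}
Y - X = L_Y - P_{(V_1, W_1)} L_Y = \bigl( P_{(V_2, W_2)} - P_{(V_1, W_1)} \bigr) L_Y,
\end{equation*}
which, by the definition of the Grassmannian metric in \eqref{eq:sigma_distance}, gives the operator-norm estimate $\|Y - X\|_F \le \rho\bigl((V_1,W_1),(V_2,W_2)\bigr)\,\|L_Y\|_F$. Then Lemma~\ref{lemma:LS_mu_closed} (part 2) applied to $Y \in \LS_{m,n}(r,s,\mu)$ yields $\|L_Y\|_F \le \tau \|Y\|_F$ with $\tau = (1 - \mu^2 r^2 s/(mn))^{-1/2}$, so altogether $\|Y - X\|_F \le \tau \rho \|Y\|_F$.

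The rest is routine bookkeeping via the triangle inequality, the RIC hypothesis on $\Sigma_1$, and the defining bound $\|\A(Y - X)\| \le \|\A\|\|Y - X\|_F$:
\begin{equation*}
(1-\ric)\|X\|_F - \|\A\|\|Y - X\|_F \;\le\; \|\A(Y)\| \;\le\; (1+\ric)\|X\|_F + \|\A\|\|Y - X\|_F.
\end{equation*}
Substituting $\|X\|_F \le \|Y\|_F + \|Y-X\|_F$ in the upper bound and $\|X\|_F \ge \|Y\|_F - \|Y-X\|_F$ in the lower bound, and then replacing $\|Y-X\|_F$ by $\tau \rho \|Y\|_F$, collects the excess into the single factor $\tau \rho(1 + \ric + \|\A\|)$ on both sides, yielding the claimed $\ric' = \ric + \tau \rho(1 + \ric + \|\A\|)$.

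The one genuinely non-routine point, and hence the main obstacle, is the identity $(I - P_{(V_1,W_1)}) L_Y = (P_{(V_2,W_2)} - P_{(V_1,W_1)}) L_Y$, since the Grassmannian distance $\rho$ is only useful when the difference of projectors acts on a matrix already living in the range of one of them. Everything else is accounting; the closedness statement in Lemma~\ref{lemma:LS_mu_closed} does the essential work of replacing the a priori unbounded ratio $\|L_Y\|_F / \|Y\|_F$ by the finite constant $\tau$, which is why the incoherence regime $\mu < \sqrt{mn}/(r\sqrt{s})$ enters the hypothesis.
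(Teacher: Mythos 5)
Your proposal is correct and follows essentially the same route as the paper: the auxiliary matrix $X = P_{(V_1,W_1)}L_Y + S_Y$ is exactly the element $P_{U_1}L + S$ that the paper's proof compares against, the identity $Y - X = (P_{(V_2,W_2)} - P_{(V_1,W_1)})L_Y$ is the same key step, and Lemma~\ref{lemma:LS_mu_closed} supplies the bound $\|L_Y\|_F \le \tau\|Y\|_F$ in both arguments. The only difference is presentational (you isolate the perturbation as a single triangle-inequality correction rather than inlining the chain of inequalities), and your observation that the incoherence constraints in $\Sigma_1$ are conditions on the ambient subspaces, so that $X \in \Sigma_1$ needs no further checking, is accurate.
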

The proof is similar to the line of argument made in~\cite[Lemma 4.4]{Recht2010guaranteed}, see \ref{sec:appendix_lemma} on  page~\pageref{lemma:var_delta_proof}. The notable exception is the term $\tau := (1-\mu^2 \frac{r^2 s}{mn})^{-1/2}$ appearing in the expression for $\ric'$, which is a result of the set $\LS_{m,n}(r,s)$ not being closed, \Rev{as shown in \cite[Theorem 1.1]{Tanner2019matrix}, without the constraint $\|L\|_F\leq \tau \| X \|_F$ from Lemma~\ref{lemma:LS_mu_closed}.}

To establish the proof of Theorem~\ref{thm:rip_ls} we combine Lemma~\ref{lemma:rip_single} and Lemma~\ref{lemma:var_delta} with an $\varepsilon$-covering of $\LS_{m,n}(r,s,\mu)$, where $\varepsilon$ will be picked to control the maximal allowed perturbation between the subspaces $\rho\left(\left(V_1, W_1\right), \left(V_2, W_2\right)\right)$. The \textit{covering number} $\cR(\varepsilon)$ of $\LS_{m,n}(r,s,\mu)$ at resolution $\varepsilon$ is the smallest number of subspaces $(V_i, W_i, T_i)$ such that, for any triple of $V\in\G(m,r),W\in\G(n,r),T\in\V(mn,s)$ there exists $i$ with $\rho\left(\left(V, W\right), (V_i, W_i)\right)\leq \varepsilon$ and $T = T_i$. The following Lemma gives an upper bound on the cardinality of $\varepsilon$-covering.

\begin{restatable}[Covering number of $\LS_{m,n}(r,s)$]{lemma}{lemmalsrscover}\label{lemma:ls_rs_cover}
The covering number $\cR(\varepsilon)$ of the set $\LS_{m,n}(r,s)$ is bounded above by
\begin{equation}
	\cR (\varepsilon) \leq {mn\choose s} \left( \frac{4\pi}{\varepsilon} \right)^{r \left(m+n-2r\right)}.
\end{equation}
\end{restatable}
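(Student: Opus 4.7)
The plan is to decouple the three parameters that determine a subspace $\Sigma_{m,n}(V,W,T,\mu)$ of $\LS_{m,n}(r,s,\mu)$, namely the column subspace $V\in\G(m,r)$, the row subspace $W\in\G(n,r)$, and the support $T\in\V(mn,s)$, and then cover each piece separately. The support $T$ is handled combinatorially by exhausting all $\binom{mn}{s}$ possibilities, since the definition of $\cR(\varepsilon)$ preceding the lemma requires $T_i = T$ exactly. All remaining work is to cover $\G(m,r)\times\G(n,r)$ in the metric induced by $\rho$ in \eqref{eq:sigma_distance}.

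The key enabling step for that product-space covering is a subadditivity inequality for the projection metric. Using the identity $P_{(V,W)}X = P_V X P_W$, I would write
\begin{equation*}
	P_{V_1} X P_{W_1} - P_{V_2} X P_{W_2} = (P_{V_1}-P_{V_2}) X P_{W_1} + P_{V_2} X (P_{W_1}-P_{W_2}),
\end{equation*}
and, taking Frobenius norms and using $\|AXB\|_F \leq \|A\|\,\|X\|_F\,\|B\|$ together with $\|P_{V_2}\|=\|P_{W_1}\|=1$, deduce
\begin{equation*}
	\rho\bigl((V_1,W_1),(V_2,W_2)\bigr) \leq \|P_{V_1}-P_{V_2}\| + \|P_{W_1}-P_{W_2}\| = \rho(V_1,V_2)+\rho(W_1,W_2).
\end{equation*}
Consequently, if $\G(m,r)$ is covered at resolution $\varepsilon/2$ and $\G(n,r)$ is covered at resolution $\varepsilon/2$, their Cartesian product is covered at resolution $\varepsilon$ in the product metric.

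For each of the two Grassmannian factors I would invoke the classical Szarek volume-comparison bound, which states that $\G(D,d)$ admits a $\eta$-net in the projection metric of cardinality at most $(c/\eta)^{d(D-d)}$ for an absolute constant $c$; choosing the standard value $c = 2\pi$ and evaluating at $\eta = \varepsilon/2$ yields covers of sizes $(4\pi/\varepsilon)^{r(m-r)}$ and $(4\pi/\varepsilon)^{r(n-r)}$ for $\G(m,r)$ and $\G(n,r)$ respectively. Multiplying these two counts, then multiplying by the combinatorial factor $\binom{mn}{s}$ for the support, produces the advertised bound $\binom{mn}{s}(4\pi/\varepsilon)^{r(m+n-2r)}$.

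The main obstacle I expect is making the subadditivity step rigorous: the supremum defining $\|P_{(V,W)}\|$ is over $\|X\|_F = 1$ rather than over the operator norm of $X$, so one must be careful that the bounds $\|AXB\|_F \leq \|A\|\,\|X\|_F\,\|B\|$ are applied in the correct norm-pairing to keep the inequality tight. A secondary, essentially bookkeeping issue, is sourcing the exact constant in the Grassmannian covering bound so that the stated value $4\pi$ emerges after the factor-of-two loss incurred by splitting $\varepsilon$ between the two Grassmannians.
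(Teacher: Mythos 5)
Your proposal is correct and follows essentially the same route as the paper: exhaust the $\binom{mn}{s}$ supports combinatorially, establish the subadditivity $\rho((V_1,W_1),(V_2,W_2))\leq\rho(V_1,V_2)+\rho(W_1,W_2)$ via the same telescoping decomposition (the paper writes it with the Kronecker product $P_{(V,W)}=P_V\otimes P_W$, which is the vectorized form of your $P_V X P_W$), and apply Szarek's bound with constant $2\pi$ at resolution $\varepsilon/2$ on each Grassmannian factor. The norm-pairing concern you raise is handled exactly as you suggest, so there is no gap.
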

The proof comes by counting the possible support sets with cardinality $s$ and by the work of Szarek on $\varepsilon$-covering of the Grassmannian \cite[Theorem 8]{Szarek1998metric}, for completeness the proof is given in \ref{lemma:ls_rs_cover_proof}, page~\pageref{lemma:ls_rs_cover_proof}.

Bounds on the RIC for the set of low-rank plus sparse matrices then follow a proof technique that uses the covering number argument in combination with the concentration of measure inequalities as was done before for sparse vectors~\cite{Baraniuk2008a} and subsequently for low-rank matrices~\cite{Recht2010guaranteed}.

\prooftitle{Theorem~\ref{thm:rip_ls}}{RIC for $\LS_{m,n}(r,s,\mu)$}{\pageref{thm:rip_ls}}
\begin{proof}
\label{thm:rip_ls_proof}
By linearity of $\cA$ and conicity of $\LS_{m,n}(r,s,\cohc)$ assume without loss of generality $\|X\|_F = 1$ and consequently also $\|L\|_F\leq \tau$ and $\|S\|_F\leq \tau$ with \Rev{$\tau := (1-\mu^2 \frac{r^2 s}{mn})^{-1/2}$ by Lemma~\ref{lemma:LS_mu_closed} and by $\cohc < \sqrt{mn}/ (r \sqrt{s})$}. Let $(V_i, W_i, T_i)$ be an $\varepsilon$-covering of $\LS_{m,n}(r,s,\mu)$ whose covering number is bounded by Lemma~\ref{lemma:ls_rs_cover} since $\LS_{m,n}(r,s,\mu)\subset \LS_{m,n}(r,s)$. For every triple $(V_i, W_i, T_i)$ define a subset of matrices
\begin{equation}
	\cB_i = \left\{X\in\Sigma_{m,n}\left(V,W,T_i,\cohc \right) \,:\, \rho\left( \left(V,W\right), \left(V_i,W_i\right)\right) \leq \varepsilon \right\}.
\end{equation}
By $(V_i, W_i, T_i)$ being an $\varepsilon$-covering we have $\LS_{m,n}(r,s, \cohc)\subseteq \bigcup_i \cB_i$. Therefore, if for all $\cB_i$
\begin{equation}
	(\forall X \in \cB_i): \quad (1-\ric)\|X\|_F \leq \|\cA(X)\| \leq (1+\ric)\|X\|_F
\end{equation}
holds, then necessarily $\ric_{r,s, \cohc} \leq \ric$, proving that
\begin{align}
	\Pr(\ric_{r,s,\cohc} &\leq \ric) = \Pr\Big(\forall X \in \LS_{m,n}(r,s, \cohc):\, (1-\ric) \|X\|_F \leq \|\cA(X)\|\leq (1+\ric) \|X\|_F \Big) \\
	&\geq \Pr \Big( (\forall i), (\forall X\in \cB_i):\,  (1-\ric) \|X\|_F \leq \|\cA(X)\|\leq (1+\ric) \|X\|_F\Big),\label{eq:rip_prob1}
\end{align}
where the inequality comes from the fact that $\LS_{m,n}(r,s,\cohc)$ is a subset of the $\varepsilon$-covering $\bigcup_i \cB_i$ and therefore the statement holds with less or equal probability. It remains to derive a lower bound on the probability in the equation \eqref{eq:rip_prob1} which in turn proves the theorem.

In the case that $\| \cA \|\leq \frac{\ric}{2\tau\varepsilon} -1 \Rev{- \frac{\ric}{2}}$, which we show later in \eqref{eq:rip_bound5} occurs with \Rev{probability exponentially converging to $1$}, rearranging the terms yields
\begin{equation}
	\tau \varepsilon (1+\ric/2 + \|\cA\|) \leq \ric/2. \label{eq:rip_bound1}
\end{equation}

If the RIC holds for a fixed $(V_i, W_i, T_i)$ with $\ric/2$, then by Lemma \ref{lemma:var_delta} in combination with \eqref{eq:rip_bound1} yields  
\begin{equation}
	(\forall X\in \cB_i): \, (1-\ric)\|X\|_F \leq \|\cA \Rev{(X)}\| \leq (1+\ric)\|X\|_F.
\end{equation}
Therefore, using the probability union bound on \eqref{eq:rip_prob1} over all $i$'s and the probability of $\|\cA\|$ satisfying the bound $\varepsilon \leq \ric/\left(2\tau\left( 1+ \|\cA\| \right)\right)\leq \ric/\left(2\tau\left( 1+ \|\cA\| \right)\right)$ \Rev{by \eqref{eq:rip_bound1} and $\ric\geq 0$}. 
\begin{align}
	& \Pr \Big( (\forall i), (\forall X\in \cB_i):\,  (1-\ric) \|X\|_F \leq \|\cA(X)\|\leq (1+\ric) \|X\|_F\Big) \label{eq:rip_bound2}\\
	 \geq 1&-\sum_i \Pr \left( \exists Y\in \Sigma_{m,n}(V_i, W_i, T_i, \cohc)\, : \, 
		\begin{array}{c}
			\left\| \cA(Y) \right\| < (1-\ric/2) \\ 
			\text{or} \quad \left\| \cA(Y) \right\| > (1+\ric/2)
		\end{array}\right) \label{eq:rip_bound3} \\
	 &-\Pr\Big( \left\| \cA \right\|\geq \frac{\ric}{2\tau\varepsilon} -1 \Rev{-\frac{\ric}{2}}\Big). \label{eq:rip_bound4}
\end{align}

The probability in \eqref{eq:rip_bound3} is bounded from above as
\begin{align}
	&\sum_i \Pr \left( \exists Y\in \Sigma_{m,n}(V_i, W_i, T_i, \cohc)\, : \, 
	\begin{array}{c}
		\left\| \cA(Y) \right\| < (1-\ric/2) \\ 
		\text{or} \quad \left\| \cA(Y) \right\| > (1+\ric/2)
	\end{array}\right) \\
	&\leq 2\fR(\varepsilon) \left( \frac{48}{\ric}\tau \right)^{r^2} \left( \frac{48}{\ric} \tau \right)^{s} \exp\left(-\frac{p}{2}\left(\frac{\ric^2}{32} - \frac{\ric^3}{192}\right) \right) \\
	&\leq 2 {mn \choose s} \left( \frac{4\pi}{\varepsilon} \right)^{r(m+n-2r)}\left( \frac{48}{\ric}\tau \right)^{r^2 + s} \exp\left(-\frac{p}{2}\left(\frac{\ric^2}{32} - \frac{\ric^3}{192}\right) \right),\label{eq:rip_termA}
\end{align}
where in the first inequality we used Lemma \ref{lemma:rip_single} and in the second inequality the bound on the $\varepsilon$-covering of the subspaces by Lemma~\ref{lemma:ls_rs_cover}.

In order to complete the lower bound in \eqref{eq:rip_bound2} it remains to upper bound \eqref{eq:rip_bound4} which we obtain by selecting the covering resolution $\varepsilon$ sufficiently small so that the $\Pr\left( \| \cA \|\geq \frac{\ric}{2\tau\varepsilon} -1 \Rev{-\frac{\ric}{2}}\right)$ is exponentially small with the exponent proportional to the bound in \eqref{eq:rip_termA}. From condition \eqref{eq:energy_decay} of Definition \ref{def:near_isometry} we have that the random linear map satisfies 
\begin{equation}
	(\exists \gamma >0): \quad \Pr\left( \| \cA \|\geq 1+ \sqrt{\frac{mn}{p}} + t \right) \leq \exp\left( -\gamma p t^2 \right), \label{eq:rip_bound5}
\end{equation}
in particular
\begin{equation}
	\Pr\left( \| \cA \|\geq \frac{\ric}{2\tau\varepsilon} -1 \Rev{-\frac{\ric}{2}}\right) \leq \exp\left( -\gamma p \left(\frac{\ric}{2\tau\varepsilon} \Rev{-\frac{\ric}{2}} -\sqrt{\frac{mn}{p}} -2\right)^2 \right).
\end{equation}
Selecting the covering resolution 
$\varepsilon$
\begin{equation}
	\varepsilon < \frac{\ric}{4\tau \left(\sqrt{mn/p} + 1 \Rev{+ \ric/4}\right)}, \label{eq:rip_epsilon}
\end{equation}
obtains the following exponentially small upper bound 
\begin{equation}
\Pr\left( \| \cA \|\geq \frac{\ric}{2\tau\varepsilon} -1 - \Rev{\frac{\ric}{2}} \right) \leq \exp\left(-\gamma mn\right) \label{eq:rip_termB}.
\end{equation}

Returning to the inequality \eqref{eq:rip_bound2}, combined with the bound on the first term in \eqref{eq:rip_termA}, and setting $\varepsilon = \ric \Big/ \left( 4\tau \left(\sqrt{mn/p} + 1 + \ric/4\right)\right)$ in the second term of \eqref{eq:rip_termA}, such that \eqref{eq:rip_epsilon} is satisfied, we have that
\begin{align}
	&\qquad\quad 2 \left(\frac{emn}{s}\right)^s \left( \frac{16\pi (\sqrt{mn/p} + 1 + \Rev{\ric/4})}{\ric} \Rev{\tau} \right)^{r(m+n-2r)}\left( \frac{48}{\ric}\tau \right)^{r^2+ s} \nonumber \\
	&\hspace{2in} \cdot\,\exp\left(- \frac{p}{2}\left(\frac{\ric^2}{32} - \frac{\ric^3}{192}\right) \right) \\
	=&  \exp\Bigg( -p a(\ric) + r\left(m+n-2r\right) \log\left(\sqrt{\frac{mn}{p}} +1 \Rev{ +\frac{\ric}{4} } \right)  + r\left(m+n-2r\right) \log\left( \frac{16\pi}{\ric}\tau\right) \nonumber \\
	&\qquad\qquad  + (r^2 +s) \log\left(\frac{48}{\ric}\tau\right) + s\log\left( \frac{emn}{s} \right) \Rev{+\log(2)} \Bigg),\label{eq:rip_final_bound}
\end{align}
where we used the inequality ${mn \choose s} \leq \left(\frac{emn}{s}\right)^s$ and we define $a(\ric) := \ric^2/64 -\ric^3/384$. The $2^{nd}$, $3^{rd}$ and $4^{th}$ terms in \eqref{eq:rip_final_bound} can be bounded as
\begin{equation}
	(\exists c_2>0): \quad 2^{nd}+3^{rd}+4^{th} \leq \left(c_2/a(\ric)\right) r(m+n-r)\log\left(\frac{mn}{p}\Rev{\tau}\right),
\end{equation}
and the $5^{th}$ and $6^{th}$ term of \eqref{eq:rip_final_bound} as
\begin{equation}
	(\exists c_3>0): \quad 5^{th}+6^{th} \leq \left(c_3/a(\ric)\right) s\log\left(\frac{mn}{s}\Rev{\tau}\right),
\end{equation}
where $c_2$ and $c_3$ are dependent only on $\ric$. Therefore there exists positive constants $c_0, c_1$ that depend\footnote{\Rev{We have that $c_1=\Rev{(1+\gamma) a(\ric)^{-1}}$ and $c_0=16 \pi / (\ric \, a(\ric))$.}} only on $\ric$ such that if $p\geq c_0 \left(r(m+n-r)+s\right)\log\left(\frac{mn}{s}\tau\right)$, then RICs are upper bounded by the constant $\ric$ with probability at least $\e^{-c_1 p}$. \Rev{By the constant $\ric$ in \eqref{eq:ric_nonsquare} being related to the RIC with squared norms, the result also implies an upper bound on RICs with the squared norms in Definition~\ref{def:ric}.}
\end{proof}

\section{Provable recovery guarantees using computationally efficient algorithms \label{sec:alg}}
This section contains the proofs of our main algorithmic contributions that a low-rank plus sparse matrix $X_0\in\LS_{m,n}(r,s,\mu)$ can be efficiently recovered from subsampled measurements taken by a linear mapping $\A(\cdot)$ which satisfies given bounds on its RIC. \Rev{These algorithms also provably solve  Robust PCA when $\A$ is chosen to be the identity and $s = \bigO\left(mn / (\mu^2 r^2) \right)$ which is the optimal scaling in terms of the number of corruptions, rank, and the incoherence.} Subsection~\ref{subsec:convex} presents the proof of Theorem~\ref{thm:convex_recovery} which shows that the convex relaxation \eqref{eq:convex} of \eqref{eq:non-convex} robustly recovers  $X_0$.  Subsection \ref{subsec:nonconvex} states the proofs of Theorem~\ref{thm:niht_convergence} and Theorem~\ref{thm:naht_convergence} for the simple yet efficient hard thresholding algorithms NIHT and NAHT, described in Alg.~\ref{algo:niht} and Alg.~\ref{algo:naht} respectively.

\subsection{Recovery of $X_0\in\LS_{m,n}(r,s,\mu)$ using the convex relaxation \eqref{eq:convex}\label{subsec:convex}.}
Let $X^* = L^* + S^*$ be the solution of the convex optimization problem formulated in \eqref{eq:convex}.  Here it is shown that if the RICs of the measurement operator $\A(\cdot)$ are sufficient small, then $X^* = X_0$ when the linear constraint in the convex optimization problem \eqref{eq:convex} is satisfied exactly, or alternatively that $\left\|X^*-X_0\right\|_F$ is proportional to $\left\|\A(X^*) - b\right\|_2$.

\prooftitle{Theorem~\ref{thm:convex_recovery}}{Guaranteed recovery by the convex relaxation \eqref{eq:convex}}{\pageref{thm:convex_recovery}}
\begin{proof}
	Let $R= X^* - X_0 = (L^* - L_0) + (S^* - S_0) = R^L + R^S$ be the residual split into the low-rank component $R^L = L^* - L_0$ and the sparse component $R^S=S^* - S_0$. We treat $R^L$ and $R^S$ separately, combining the method of proof used in the context of compressed sensing by \cite{Candes2005stable} and its extension for the low-rank matrix recovery by \cite{Recht2010guaranteed} \Rev{with the important exception of needing to decompose $R^L$ into a sum of incoherent low-rank matrices using Lemma~\ref{lemma:decomposing_RL} and carefully treat its correlation~with~$R^S$}.

	By Lemma~\ref{lemma:R0L} on page \pageref{lemma:R0L} there exist matrices $R^L_0, R_c^L \in\bR^{m\times n}$ such that $R^L =R^L_0 + R^L_c$ and
	\begin{gather}
		R_0^L \in \LS_{m,n}(2r, 0, \mu)\\
		L_0 (R_c^L)^T = 0_{m\times m} \quad \text{and}\quad L_0^T R_c^L = 0_{n\times n}.
	\end{gather}
	Similarly, by the argument made in the proof of \cite[Theorem 1]{Candes2005stable}, which we state in Lemma~\ref{lemma:R0S}, there exist matrices $R_0^S,R_c^S\in\bR^{m\times n}$ such that $R^S = R_0^S + R_c^S$ and 
	\begin{gather}
		\left\|R_0^S\right\|_0 \leq s \\
		\supp{\left(S_0\right)} \cap \supp{\left(R_c^S\right)} = \emptyset.
	\end{gather}
	
	By $(L^*, S^*)$ being a minimum and $X_0$ being feasible of the convex optimization problem \eqref{eq:convex}
	\begin{align}
		\left\| L_0\right\|_* + \lambda \left\| S_0 \right\|_1 &\geq \left\|L^*\right\|_* + \lambda \left\|S^*\right\|_1 \\
		&=  \left\| L_0 + R_0^L + R_c^L\right\|_* + \lambda \left\| S_0+R_0^S+R_c^S \right\|_1\\
		&\geq \left\|L_0 + R_c^L \right\|_* - \left\| R_0^L\right\|_* + \lambda \left\|S_0+R_c^S\right\|_1 - \lambda \left\|R_0^S\right\|_1\\
		& = \left\|L_0\right\|_* + \left\| R_c^L \right\|_* - \left\|R_0^L\right\|_* + \lambda \left\|S_0\right\|_1 +\lambda \left\|R_c^S\right\|_1 - \lambda \left\| R_0^S\right\|_1, \label{eq:convex_optimality0}
	\end{align}
	where the second line comes from $L^* - L_0 = R_0^L + R_c^L$ and $S^* - S_0 = R_0^S + R_c^S$, the inequality in the third line comes from the reverse triangle inequality, and the fourth line comes from the construction of $R_c^L$ and $R_c^S$ combined with \cite[Lemma 2.3]{Recht2010guaranteed}, restated as Corollary \ref{cor:convex_nucsum}, and by $\supp(R_c^S)\cap\supp(R_0^S) = \emptyset$. Subtracting $\left\|L_0\right\|_*$ and $\left\|S_0\right\|_1$ from both sides of \eqref{eq:convex_optimality0} and rearranging terms yields
	\begin{equation}
		\left\|R_c^L \right\|_* + \lambda \left\|R_c^S\right\|_1 \leq \left\|R_0^L\right\|_* + \lambda \left\|R_0^S\right\|_1. \label{eq:convex_optimality}
	\end{equation}
	
	We proceed by decomposing the remainder terms $R^L_c$ and $R^S_c$ as sums of matrices with decreasing energy as was done by \cite{Recht2010guaranteed} for low-rank matrices and by \cite{Candes2005stable} for sparse vectors.   
	\Rev{By Lemma~\ref{lemma:decomposing_RL} there exists a decomposition $R^L_c = R^L_1 + R^L_2 + \ldots$ such that}
	\begin{gather}
		\Rev{R_i^L \in \LS_{m,n}(M_r, 0, \mu)} \label{eq:convex_RcL_rank}\\
		R_i^L \left(R_j^L\right)^T = 0_{m\times m} \quad \text{and}\quad \left(R_i^L\right)^T R_j^L = 0_{n\times n},\quad \forall i\neq j \label{eq:convex_RcL_ortho}\\
	   \left\|R_{i+1}^L\right\|_F^2 \leq \frac{1}{M_r}\left\|R_i^L\right\|_*^2 \label{eq:convex_RcL_decay}.
   	\end{gather}
	
	To decompose the residual of the sparse component order the indices of $R^S_c$ as $v_1, v_2, \ldots, v_{mn}\in [m]\times[n]$ in decreasing order of magnitude of the entries of $R^S_c$ and split the indices of the entries into sets of size $M_s$ as
	\begin{equation}
		 T_i := \left\{ v_\ell\,:\, (i-1)M_s \leq \ell \leq iM_s\right\},
	\end{equation}
	Constructing $R_i^S := \left(R^S_c\right)_{T_i}$ decomposes $R^S_c$ into a sum $R^S_c = R^S_1 + R^S_2 + \ldots$ such that
	\begin{gather}
		 \left\|R_i^S \right\|_0 \leq M_s, \qquad \forall i\geq 1 \label{eq:convex_RcS_sparse}\\
		 \emptyset =T_i  \cap T_j, \qquad \forall i\neq j \label{eq:convex_RcS_ortho}\\
		\left|R_c^S\right|_{(v)} \leq \frac{1}{\Rev{M_s}} \sum_{j\in T_i} \left|R_i^S\right|_{(j)}, \qquad\forall v\in T_{i+1}\label{eq:convex_RcS_decay}
	\end{gather}
	where the inequality \eqref{eq:convex_RcS_decay} implies that $\left\|R_{i+1}^S\right\|_F^2 \leq \frac{1}{M_s}\left\|R_i^S\right\|_1^2$. \Rev{We denote $R_i = R_i^L + R_i^S$ which are in $\LS_{mn}(M_r,M_s,\mu)$ by construction.} Combining the two decompositions of $R_c^L$ and $R_c^S$ gives the following bound
	\begin{align}
		\Rev{\sum_{j\geq 2} \left\| R_j \right\|_F } &\leq \sum_{j\geq 2} \left\|R_j^L\right\|_F +  \sum_{j\geq 2} \left\|R_j^S\right\|_F \\ 
		&\leq \sqrt{\frac{1}{M_r}} \sum_{j\geq 1} \left\|R_j^L\right\|_* + \sqrt{\frac{1}{M_s}} \sum_{j\geq 1} \left\|R_j^S\right\|_1 \\
		&= \sqrt{\frac{1}{M_r}} \left\|R_c^L\right\|_* + \sqrt{\frac{1}{M_s}} \left\|R_c^S\right\|_1 \\
		&\leq \sqrt{\frac{1}{M_r}} \left( \left\|R_0^L\right\|_* + \sqrt{\frac{M_r}{M_s}}\left\|R_0^S\right\|_1 \right) \\
		&\leq \sqrt{\frac{2r}{M_r}} \left\|R_0^L\right\|_F + \sqrt{\frac{s}{M_s}}\left\|R_0^S\right\|_F,\label{eq:convex_optimality2b}
	\end{align}
	where the inequality in the first line comes from the triangle inequality, the second inequality comes as a consequence of \eqref{eq:convex_RcL_decay} and \eqref{eq:convex_RcS_decay}, the third line comes from \eqref{eq:convex_RcL_ortho} combined with \cite[Lemma 2.3]{Recht2010guaranteed}, restated as Corollary \ref{cor:convex_nucsum}, and from \eqref{eq:convex_RcS_ortho}, the fourth inequality comes from \eqref{eq:convex_optimality} with $\lambda = \sqrt{M_r/M_s}$, and the last fifth line is a property of $\ell_1$ and Schatten-$1$ norms. Choosing $M_r = \Rev{2}r$ and $M_s = s$ in \eqref{eq:convex_optimality2b} gives
	\begin{equation}
		\Rev{\sum_{j\geq 2} \left\| R_j \right\|_F } \leq \left\|R_0^L\right\|_F + \left\|R_0^S\right\|_F \label{eq:convex_optimality2},
	\end{equation}
	and also that $\lambda = \sqrt{\Rev{2}r/s}$ as stated in the theorem.
	
	By feasibility of $X^*$ and linearity of $\cA$ we have
	\begin{equation}
		\epb \geq \left\|\cA\left(X^*\right) - b\right\|_2  = \left\| \cA\left(X^* - X_0\right)\right\|_2 =\left\|\cA\left(R\right)\right\|_2.\label{eq:R_feasibility}
	\end{equation}
	Let \Rev{$\rict := \rict_{4r, 2s,\mu}$} be the RIC with squared norms for \Rev{$\LS_{m,n}(4r, 2s, \mu)$} and \Rev{$\rcc := \mu \frac{4r \sqrt{2s}}{\sqrt{mn}} < 1$}. Then
	\begin{align}
		(1- &\rict) \|R_0^L \|^2_F \leq \left\|\cA\left(R_0^L\right)\right\|_2^2 = \left|\inner{\cA(R_0^L )}{\cA(R_0^L - R + R)}\right| \label{eq:convex_L_bound0start}\\
			& = \left| \inner{\cA(R_0^L )}{\cA(R_0^L - R)} + \inner{\cA(R_0^L)}{\cA(R)} \right| \\
			&\leq \left| \inner{\cA\left( R_0^L \right)}{\cA(-R_0^S - R_1 - \sum_{j\geq 2} R_j )}\right| + \left|\inner{\cA(R_0^L)}{\cA(R)}\right|\\
			& \leq \left( \frac{2\rcc}{1-\rcc^2} + \rict \right) \left\|R_0^L \right\|_F\left( \left\|R_0^S \right\|_F + \left\|R_1\right\|_F + \sum_{j\geq 2}\left\|R_j\right\|_F\right)  +  \left\|\cA\left(R_0^L \right)\right\|_2 \left\|\cA\left(R\right)\right\|_2,\label{eq:convex_L_bound0} \\
			&\leq \left( \frac{2\rcc}{1-\rcc^2} + \rict \right)  \left\|R_0^L\right\|_F \left( \left\| R_0^L \right\|_F + 2 \left\| R_0^S \right\|_F +  \left\|R_1\right\|_F \right) + (1+\rict) \|R_0^L \|_F \epb \label{eq:convex_L_bound1}
	\end{align}
	where the inequality in the first line comes from \Rev{$R_0^L \in \LS_{m,n}(4r, 2s,\mu)$} satisfying the RICs, the second line is a consequence of feasibility in \eqref{eq:R_feasibility}, the third line comes from Lemma \ref{lemma:Ainp1} and by sums of individual pairs in the inner product being in \Rev{$\LS_{m,n}(4r, 2s,\mu)$ by Lemma~\ref{lemma:ls_mu_additive}, and the last inequality follows from the optimality condition in \eqref{eq:convex_optimality2}}. After dividing both sides of \eqref{eq:convex_L_bound1} by $(1-\rict)\left\|R_0^L \right\|_F$ gives
	\begin{equation}
		\left\|R_0^L\right\|_F \leq \frac{ 1 }{1-\rict} \left(\frac{2\rcc}{1-\rcc^2} + \rict \right) \left( \left\| R_0^L \right\|_F + 2 \left\| R_0^S \right\|_F +  \left\|R_1\right\|_F \right) + \epb \frac{1+\rict}{1-\rict} \label{eq:convex_bound1}.
	\end{equation}

	\emph{Mutatis mutandis}, the same argument applies to $\left\|R_0^S\right\|_F$
	\begin{equation}
		\left\|R_0^S\right\|_F \leq \frac{ 1 }{1-\rict} \left(\frac{2\rcc}{1-\rcc^2} + \rict \right) \left( 2 \left\| R_0^L \right\|_F + \left\| R_0^S  \right\|_F +  \left\|R_1\right\|_F \right) + \epb \frac{1+\rict}{1-\rict}\label{eq:convex_bound2},
	\end{equation}
	and similarly to $\left\|R_1\right\|_F$ as
	\begin{equation}
		\left\|R_1\right\|_F \leq \frac{ 1 }{1-\rict} \left(\frac{2\rcc}{1-\rcc^2} + \rict \right) \left( 2 \left\| R_0^S \right\|_F + 2 \left\| R_0^L \right\|_F \right) + \epb \frac{1+\rict}{1-\rict}\label{eq:convex_bound3}.
	\end{equation}
	Adding \eqref{eq:convex_bound1}, \eqref{eq:convex_bound3}, and \eqref{eq:convex_bound3} together gives
	\begin{equation}
		\left\|R_0^L\right\|_F + \left\|R_0^S\right\|_F + \left\|R_1\right\|_F \leq \frac{ 1 }{1-\rict} \left(\frac{2\rcc}{1-\rcc^2} + \rict \right)  \left( 5 \left\| R_0^L \right\|_F + 5 \left\| R_0^S \right\|_F + 2 \left\| R_1 \right\|_F \right) + 3\epb \frac{1+\rict}{1-\rict}\label{eq:convex_bound4},
	\end{equation}

	For $\rict < \frac{1}{7} - 2\rcc$ the prefactor $\frac{1}{1-\rict} \left( \rict + \frac{2\rcc}{1-\rcc^2} \right) < \frac{1}{6}$ and therefore also $\frac{\rict}{1-\rict} < \frac{1}{6}$, resulting into \eqref{eq:convex_bound4} being upper bounded as
	\begin{align}
		\left\|R_0^L\right\|_F + \left\|R_0^S\right\|_F + \left\|R_1\right\|_F  \leq \frac{5}{6} \left( \left\|R_0^L\right\|_F + \left\|R_0^S\right\|_F + \left\|R_1\right\|_F \right) + \frac{7}{2}\epb, \label{eq:convex_eps_ineq}
	\end{align}
	which after rearranging yields
	\begin{align}
		\left\|R_0^L\right\|_F + \left\|R_0^S\right\|_F + \left\|R_1\right\|_F  \leq 21\epb. \label{eq:finalconvexbound1}
	\end{align}

	\Rev{Applying the triangle inequality on $R = R^L_0 + R^S_0 + R_1 + \left(\sum_{j\geq 2} R_j \right)$ and using the bounds in \eqref{eq:convex_optimality2} and \eqref{eq:finalconvexbound1} concludes the proof
	\begin{align*}
		\left\| R \right\|_F &\leq \left\|R_0^L\right\|_F + \left\|R_0^S\right\|_F + \left\|R_1\right\|_F + \sum_{j\geq 2}  \left\| R_j \right\|   \\
		& \leq  2 \left\|R_0^L\right\|_F + 2 \left\|R_0^S\right\|_F + \left\|R_1\right\|_F \leq 42\epb.
	\end{align*}}
\end{proof}

\subsection{Recovery of $X_0\in\LS_{m,n}(r,s,\mu)$ by Alg.~1 and Alg.~2.}\label{subsec:nonconvex}
This section presents the proofs of Theorem \ref{thm:niht_convergence} and \ref{thm:naht_convergence}, that NIHT and NAHT respectively recover $X_0\in\LS_{m,n}(r,s,\mu)$ from $\A(X_0)$ and knowledge of $(r,s,\mu)$ provided the RICs of $\A(\cdot)$ are sufficiently bounded.

The proof of NIHT follows the same line of thought as the one for low-rank matrix completion \cite{Tanner2013normalized}, with the only difference of the hard thresholding projection, in the form of $\RPCA$, being an imprecise projection with accuracy $\ep$ as stated in \eqref{eq:rpca_imprecise}. The proof consists of deriving an inequality where $\|X^{j+1} - X_0\|_F$ is bounded by a factor multiplying $\|X^j - X_0\|_F$, and then showing that this multiplicative factor is strictly less then one if $\A$ satisfies RIC with $\rict_3 := \rict_{r,s,\mu}(\A) < 1/5$.


\prooftitleT{Theorem~\ref{thm:niht_convergence}}{Guaranteed recovery by NIHT, Alg.~\ref{algo:niht}}{\pageref{thm:niht_convergence}}
\begin{proof}
	Let $b = \cA(X_0)$ be the vector of measurements of the matrix $X_0 \in \LS_{m,n}(r,s,\mu)$ and $W^j = X^j - \step_j \cA^*\left(\cA(X^j)  - b\right)$ to be the update of $X^j$ before the oblique Robust PCA projection step $X^{j+1} = \RPCAarg{r}{s}{\mu}{W^j}{\epp}$. By $X^{j+1}$ being within an $\epp$ distance in the Frobenius norm of the optimal $\RPCA$ projection $X^{j+1}_\mathrm{rpca}:=\RPCAarg{r}{s}{\mu}{W^j}{0}$ defined in \eqref{eq:rpca_imprecise}
	\begin{align}
		\left\| W^j - X^{j+1} \right\|_F^2 &= \left\| W^j - X^{j+1}_\mathrm{rpca} + X^{j+1}_\mathrm{rpca} - X^{j+1}\right\|_F^2 \label{eq:niht_eq1a}\\
			&\leq \left(\left\| W^j - X^{j+1}_\mathrm{rpca}\right\|_F + \left\| X^{j+1} - X^{j+1}_\mathrm{rpca}\right\|_F\right)^2 \\
			&\leq \left(\left\|W^j - X_0\right\|_F  + \epp\right)^2, \label{eq:niht_eq1b}
	\end{align}
	where in the second line we used the triangle inequality, and the third line comes from $X^{j+1}_\mathrm{rpca}$ being the optimal projection thus being the closest matrix in \Rev{$\LS_{m,n}(r,s,\mu)$} to $W^j$ in the Frobenius norm and by $X^{j+1}$ being within $\epp$ distance of $X^{j+1}_\mathrm{rpca}$. By expansion of the left hand side of \eqref{eq:niht_eq1a}
	\begin{align}
		\left\| W^j - X^{j+1}\right\|_F^2 &= \left\| W^j - X_0 + X_0 - X^{j+1}\right\|_F^2 \\
			=& \left\| W^j - X_0 \right\|_F^2  + \left\| X_0 - X^{j+1}\right\|_F^2  + 2\inner{W^{j}- X_0}{X_0 - X^{j+1}}\\
			=& \left(\left\|W^j - X_0\right\|_F  + \epp\right)^2 \leq  \left\|W^j - X_0\right\|_F^2 + 2\epp \left\|W^j - X_0\right\|_F  + \epp^2 \label{eq:niht_eq2a}
	\end{align}
	where the last line \eqref{eq:niht_eq2a} follows from the inequality in \eqref{eq:niht_eq1b}. Subtracting $\|W^j - X_0\|_F^2$ from both sides of \eqref{eq:niht_eq2a} gives
	\begin{equation}
		\left\|  X^{j+1} -  X_0 \right\|_F^2 \leq 2\inner{W^j - X_0}{X^{j+1}- X_0} + 2\epp \left\| W^j - X_0\right\|_F + \epp^2. \label{eq:night_eq2}
	\end{equation}
	
	The matrix $W^j$ in the inner product on the right hand side of \eqref{eq:night_eq2} can be expressed using the update rule $W^{j} = X^j - \step_j \cA^*\left( \cA \left( X^j\right) - b\right)$
	\begin{align}
			2 & \inner{W^j - X_0}{X^{j+1}- X_0}  \nonumber\\
			&= 2\inp{X^j - X_0}{X^{j+1} - X_0} - 2\step_j \inner{\cA^* \cA \left( X^j - X_0\right)}{X^{j+1} - X_0} \label{eq:niht_eq3ebcomment}\\
			&= 2\inner{X^j - X_0}{X^{j+1} - X_0} - 2\step_j \inner{\cA \left( X^j - X_0\right)}{\cA\left(X^{j+1} - X_0\right)} \label{eq:niht_eq3a} \\ 
			& \leq 2 \left\| I -  \step_j A^*_Q A_Q\right\|_2 \left\| X^j - X_0 \right\|_F \left\| X^{j+1} - X_0 \right\|_F, \label{eq:niht_conv2}
	\end{align}
	where in the first line we use that $b = \cA(X_0)$ is the vector of measurements\footnote{Here it would be possible to extend the result to be stable under measurement error $\epb$ as done in Theorem \ref{thm:convex_recovery} by adding an error term in \eqref{eq:niht_eq3ebcomment}.} and linearity of $\cA$, in the second line we split the inner product into two inner products by linearity of $\cA$, and the inequality in the third line is a consequence of Lemma \ref{lemma:Ainp23}. 
	
	The matrix $W^j$ can be expressed using the update rule $W^{j} = X^j - \step_j \cA^*\left( \cA \left( X^j\right) - b\right)$ in the second term of the right hand side of \eqref{eq:night_eq2} and upper bounded by Lemma \ref{lemma:Ainp23}
	\begin{align}
		\left\|W^j- X_0\right\|_F & =  \left\| X^j - X_0 \Rev{-}\step_j \cA^*\left( \cA\left( X^j - X_0 \right)  \right)\right\|_2 \\
		& \leq  \left\| I- \step_j A_Q^* A_Q\right\|_2 \,  \left\| X^j - X_0 \right\|_F.\label{eq:niht_conv3}
	\end{align}
	By Lemma \ref{lemma:Ainp23}, the eigenvalues of $\left(I-\step_j A^*_Q A_Q\right)$ are bounded by
	\begin{equation}
			1-\step_j \left( 1 + \rict_3\right) \leq \lambda\left(I - \step_j A_Q^* A_Q\right) \leq 1 \Rev{-} \step_j \left( 1 - \rict_3\right) \label{eq:niht_ric1},
	\end{equation}
	where $\rict_3 := \Rev{\rict_{3r, 3s, \mu}}$.
	
	Consider the stepsize computed in Algorithm~\ref{algo:niht}, Line~$3$ inspired by the previous work on NIHT in the context of compressed sensing \citep{Blumensath2010normalized} and low-rank matrix sensing \citep{Tanner2013normalized}
	\begin{equation}
		\step_j = \frac{\left\| \Projarg{U^j}{\Omega^j}{R^j} \right\|^{\Rev{2}}_F}{\left\| \cA\left( \Projarg{U^j}{\Omega^j}{R^j} \right) \right\|^{\Rev{2}}_2}
	\end{equation}
	where the projection $\Projarg{U^j}{\Omega^j}{R^j}$ ensures that the residual $R^j$ is projected onto the set \Rev{$\LS_{m,n}(r,s,\mu)$}. Then we can bound $\step_j$ using the RIC of $\cA$ as
	\begin{equation}
		\frac{1}{1+\rict_1} \leq \step_j \leq \frac{1}{1-\rict_1}, \label{eq:niht_ric2}
	\end{equation}
	where $\rict_1 := \rict_{r,s,\Rev{\mu}}$. Combining \eqref{eq:niht_ric1} with \eqref{eq:niht_ric2} gives
	\begin{equation}
		1- \frac{1+\rict_3 }{1-\rict_1} \leq  \lambda\left(I - \step_j A_Q^* A_Q\right) \leq 1- \frac{1-\rict_3 }{1+\rict_1}. \label{eq:niht_op_bound}
	\end{equation}
	Since $\rict_3 \geq \rict_1$, the magnitude of the lower bound in \eqref{eq:niht_op_bound} is greater than the upper bound. Therefore
	\begin{equation}
		\eta := 2 \left(\frac{1+\rict_3}{1-\rict_1} - 1\right) \geq 2\left\| I- \step_j A_Q^* A_Q\right\|_2, \label{eq:niht_op_bound2}
	\end{equation}
	where the constant $\eta$ is strictly smaller than one if $\rict_3 < 1/5$. 
	
	Finally, the error in \eqref{eq:night_eq2} can be upper bounded by \eqref{eq:niht_conv2} combined with \eqref{eq:niht_conv3} with $\eta$ being the upper bound on the operator norm in \eqref{eq:niht_op_bound2}
	\begin{equation}
		\left\| X^{j+1} - X_0 \right\|_F^2 \leq \eta \left\| X^j - X_0 \right\|_F \, \left\| X^{j+1} - X_0 \right\|_F +  \eta  \epp \left\| X^j - X_0 \right\|_F + \epp^2. \label{eq:niht_conv4}
	\end{equation}
	
	It remains to show the inequality \eqref{eq:niht_conv4} implies the update rule contracts the error and the iterates $X^j$ converge to a matrix within the precision $\epp$ of the $\RPCA$. For the ease of notation we rewrite \eqref{eq:niht_conv4} using the notation $e^{j} := \| X^{j} - X_0 \|_F$ \Rev{and arrange the inequality into a squared form}
	\begin{align}
		\left(e^{j+1}\right)^2 &\leq \eta \, e^{j} \, e^{j+1} + \eta \,\epp \, e^j  + \epp^2 \nonumber \\
		\Rev{\left(e^{j+1} - \frac{1}{2} \eta e^j\right)^2} &\Rev{\leq \left(\frac{1}{2} \eta e^j + \epp \right)^2 }. \label{eq:niht_conv4b}
	\end{align}
	\Rev{Since the right hand side of \eqref{eq:niht_conv4b} is positive, we have $ e^{j+1} \leq \eta e^j + \epp$, which by $1/5 > \rict_3 \geq \rict_1$ gives an upper bound on the convergence rate
	\begin{equation*}
		\left\| X^{j+1} - X_0 \right\|_F \leq \frac{4\rict_3}{1-\rict_3} \left\| X^{j} - X_0 \right\|_F + \epp.
	\end{equation*}}
\end{proof}

\prooftitleT{Theorem~\ref{thm:naht_convergence}}{Guaranteed recovery of NAHT, Alg.~\ref{algo:naht}}{\pageref{thm:naht_convergence}}
\begin{proof}
	Let $b = \cA(X_0)$ be the vector of measurements\footnote{Again, it is possible to extend the result to the case when there is a measurement error $\epb$ as done in Theorem \ref{thm:convex_recovery} by having $b = \cA(X_0) + e$, with $\|e\|_2\leq \epb$.} of the matrix $X_0 \in \LS_{m,n}(r,s,\mu)$ and $V^j = L^j - \step_j^L \cA^*\left(\cA(X^j)  - b\right)$ to be the update of $L^j$ before the rank $r$ projection $L^{j+1} = \mathrm{HT}(V^j;\, r\Rev{, \mu})$. As a consequence of $L^{j+1}$ being the closest rank $r$ matrix to $V^j$ in the Frobenius norm we have that
	\begin{align}
		\left\|V^j - L_0\right\|_F^2 &\geq \left\|V^j - L^{j+1}\right\|_F^2 = \left\|V^j - L_0 + L_0 - L^{j+1}\right\|_F^2 \nonumber \\
					&= \left\|V^j - L_0 \right\|_F^2 + \left\|L_0 - L^{j+1} \right\|_F^2 + 2\inner{V^j - L_0}{L_0 - L^{j+1}}.\label{eq:nahtVj1}
	\end{align}
	Subtracting $\left\|V^j - L_0 \right\|_F^2$ from both sides of \eqref{eq:nahtVj1} and rearranging terms gives
	\begin{align}
		\left\|L_0 - L^{j+1} \right\|_F^2  &\leq 2\inner{V^j - L_0}{L^{j+1}-L_0} \\
		=&  2\inner{L^j - \step_j^L \cA^*\left(\cA\left(X^j -X_0 \right)\right) - L_0}{L^{j+1}-L_0} \\
		=&  2\inner{L^j - L_0 -  \step_j^L \cA^*\left(\cA\left(L^j - L_0 + S^j - S_0 \right)\right)}{L^{j+1}-L_0} \\
		=&  2\inner{L^j - L_0}{L^{j+1}-L_0} - 2 \step_j^L \inner{\cA\left(L^j - L_0\right)}{\cA\left(L^{j+1} - L_0\right)}\nonumber \\
		& \qquad - 2 \step_j^L \inner{\cA\left(S^j - S_0\right)}{\cA\left(L^{j+1} - L_0\right)} \\
		\leq& 2 \left\| I - \step_j^L A_Q^* A_Q \right\|  \left\| L^j - L_0 \right\|_F \left\| L^{j+1} - L_0 \right\|_F \nonumber \\
		&\qquad + 2\step_j^L \, \rho_2 \left\|S^{j} - S_0\right\|_F  \left\|L^{j+1} - L_0\right\|_F, \label{eq:nahtVj}
	\end{align}
	where in the second line we expanded $V^j$ using the update rule $V^j = L^j - \step_j^L\cA\left( \cA(X^j) - b\right)$ and $b = \cA(X_0)$, in the third line we expanded $X^j = L^j + S^j$, in the fourth line we split the inner product into two inner products by linearity of $\cA$, and in the last line the inequality comes from Lemma \ref{lemma:Ainp23} bounding the first two terms and \Rev{Lemma} \ref{lemma:Ainp1} bounding the third term with $\rho_2 := \left( \rict_2 + \frac{2\rcc_2}{1-\rcc^2_2} \right)$ where $\rict_2:= \rict_{2r,2s,\mu}$ and $\rcc_2:=\mu \frac{2r\sqrt{2s}}{\sqrt{mn}}$ since $\left( L^{j+1} - L_0 + S^j - S_0 \right) \in \LS_{m,n}(2r,2s,\mu)$. Dividing both sides of \eqref{eq:nahtVj} by $\left\|L_0 - L^{j+1} \right\|_F$ gives
	\begin{equation}
		\left\|L_0 - L^{j+1} \right\|_F  \leq 2 \left\| I - \step_j^L A_Q^* A_Q \right\| \, \left\| L^j - L_0 \right\|_F + 2\step_j^L \, \rho_2 \left\|S^{j} - S_0\right\|_F. \label{eq:nahtA}
	\end{equation}
	
	Let $W^j = S^j  - \step_j^S \cA^*\left(\cA(X^{j+\frac{1}{2}})  - b\right)$ be the subsequent update of $S^j$ before the $s$-sparse projection $S^{j+1} = \HTSarg{W^j}{s}$. By $S^{j+1}$ being the closest $s$ sparse matrix to $W^j$ in the Frobenius norm and by $\left\|S_0\right\|_0 \leq s$, it follows that
	\begin{align}
		\left\|W^j - S_0\right\|_F^2 &\geq \left\|W^j - S^{j+1}\right\|_F^2 = \left\|W^j - S_0 + S_0 - S^{j+1}\right\|_F^2 \nonumber \\
					&= \left\|W^j - S_0 \right\|_F^2 + \left\|S_0 - S^{j+1} \right\|_F^2 + 2\inner{W^j - S_0}{S_0 - S^{j+1}}. \label{eq:nahtWj1}
	\end{align}
	Subtracting $\left\|W^j - S_0\right\|_F^2$ from both sides in \eqref{eq:nahtWj1} and rearranging terms gives
	\begin{align}
		\left\|S_0 - S^{j+1} \right\|_F^2  &\leq 2\inner{W^j - S_0}{S^{j+1}-S_0} \\
		=& 2\inner{S^j - \step_j^S \cA^*\left(\cA\left(X^{j+\frac{1}{2}} -X_0 \right)\right) - S_0}{S^{j+1}-S_0} \\
		=&  2\inner{S^j - S_0 -  \step_j^S \cA^*\left(\cA\left(L^{j+1} - L_0 + S^j - S_0 \right)\right)}{S^{j+1}-S_0} \\
		=&  2\inner{S^j - S_0}{S^{j+1}-S_0} - 2 \step_j^S\inner{\cA\left(S^j - S_0\right)}{\cA\left(S^{j+1} - S_0\right)} \nonumber \\
		& \qquad - 2 \step_j^S \inner{\cA\left(L^{j+1} - L_0\right)}{\cA\left(S^{j+1} - S_0\right)} \\
		\leq& 2 \left\|I - \step_j^S A_Q^* A_Q \right\| \left\| S^j - S_0 \right\|_F \left\| S^{j+1} - S_0 \right\|_F \nonumber \\
		& \qquad + 2\step_j^S \, \rho_2 \left\|L^{j+1} - L_0\right\|_F \left\|S^{j+1} - S_0\right\|_F, \label{eq:nahtWj}
	\end{align}
	where in the second line we express $W^j$ using the update rule $W^j = S^j - \step_j^S\cA\left( \cA(X^{j+\frac{1}{2}}) - b\right)$ and $b = \cA(X_0)$, in the third line we expanded $X^{j+\frac{1}{2}} = L^{j+1} + S^j$, in the fourth line we split the inner product into two inner products by linearity of $\cA$, and the inequality in the last line comes from Lemma~\ref{lemma:Ainp23} bounding the first two terms and Lemma~\ref{lemma:Ainp1} bounding the third term with $\rho_2 := \left( \rict_2 + \frac{2\rcc_2}{1-\rcc^2_2} \right)$ where $\rict_2:= \rict_{2r,2s,\mu}$ and $\rcc_2:=\rcc_{2r,2s,\mu}$ since $\left( L^{j+1} - L_0 + S^{j+1} - S_0 \right) \in \LS_{m,n}(2r,2s,\mu)$.
	Dividing both sides of \eqref{eq:nahtWj} by $\left\|S_0 - S^{j+1} \right\|_F$ gives
	\begin{equation}
		\left\|S_0 - S^{j+1} \right\|_F  \leq 2 \left\| I - \step_j^S A_Q^T A_Q \right\| \left\| S^j - S_0 \right\|_F + 2\step_j^S \, \rho_2 \left\|L^{j+1} - L_0\right\|_F. \label{eq:nahtB}
	\end{equation}
	Adding together \eqref{eq:nahtA} and \eqref{eq:nahtB}
	\begin{align}
		\left\|L_0 - L^{j+1} \right\|_F &+ \left\|S_0 - S^{j+1} \right\|_F  \leq \nonumber \\
		&  2 \left\|I - \step_j^L A_Q^T A_Q \right\|  \left\| L^j - L_0 \right\|_F + 2\step_j^L \, \rho_2 \left\|S^{j} - S_0\right\|_F \nonumber \\
		\quad + &2 \left\| I - \step_j^S A_Q^T A_Q \right\| \left\| S^j - S_0 \right\|_F + 2\step_j^S \, \rho_2 \left\|L^{j+1} - L_0\right\|_F \label{eq:nahtAB1},
	\end{align}
	which after rearranging terms in \eqref{eq:nahtAB1} becomes
	\begin{align}
		\left( 1 - 2\step_j^S \, \rho_2 \right)&\left\|L_0 - L^{j+1} \right\|_F + \left\|S_0 - S^{j+1} \right\|_F \nonumber \\  
		\leq \, &2\, \left\|I - \step_j^L A_Q^T A_Q \right\| \, \left\| L^j - L_0 \right\|_F  \nonumber \\ 
		 + &2\,\left( \left\|I - \step_j^S A_Q^T A_Q \right\|  + \step_j^L \, \rho_2 \right)\left\| S^j - S_0 \right\|_F\label{eq:nahtAB2}
	\end{align}
	and because $\step_j^S, \step_j^L,\rict_2 \geq 0$ and $\rcc_2 \in (0,1)$, subtracting $2\step_j^S \rho_2 \|S_0 - S^{j+1} \|_F$ on the left does not increase the left hand side while adding $2 \step_j^L \, \rho_2 \left\| L^j - L_0 \right\|_F $ on the right does not decrease the right hand side of \eqref{eq:nahtAB2}, therefore 
	\begin{align}
		\left( 1 - 2\step_j^S \rho_2 \right)& \left( \left\|L_0 - L^{j+1} \right\|_F + \left\|S_0 - S^{j+1} \right\|_F\right)  \nonumber \\
		\leq 2 & \left( \left\|I - \step_j A_Q^T A_Q \right\|  + \step_j^L \rho_2 \right) \left(  \left\| L^j - L_0 \right\|_F + \left\| S^j - S_0 \right\|_F \right), \label{eq:nahtAB3}
	\end{align}
	\Rev{where $\left\|I - \step_j A_Q^T A_Q \right\| = \max \left\{ \left\|I - \step_j^L A_Q^T A_Q \right\|, \left\|I - \step_j^S A_Q^T A_Q \right\| \right\}$.} Dividing both sides of \eqref{eq:nahtAB3} by $\left( 1 - 2\step_j^S \rho_2 \right)$ simplifies to
	\begin{align}
		\left\|L_0 - L^{j+1} \right\|_F &+ \left\| S_0 - S^{j+1} \right\|_F \nonumber \\
		 \leq  2 &\frac{ \left\|I - \step_j A_Q^T A_Q \right\|  + \step_j^L \rho_2}{1 - 2\step_j^S \rho_2 } \left(  \left\| L^j - L_0 \right\|_F + \left\| S^j - S_0 \right\|_F \right). \label{eq:nahtAB3b}
	\end{align}
	By Lemma \ref{lemma:Ainp23}, the eigenvalues of $\left(I-\step_j A^T_Q A_Q\right)$ can be bounded as
	\begin{equation}
			1-\step_j \left( 1 + \rict_3\right) \leq \lambda\left(I - \step_j A_Q^T A_Q\right) \leq 1 \Rev{-} \step_j \left( 1 - \rict_3\right), \label{eq:naht_ric1}
	\end{equation}
	with $\rict_3 := \rict_{3r, 3s, \mu}$ being the RIC of $\cA$. By $\step_j^L$ and $\step_j^S$ being the normalized stepsizes as introduced in \citep{Blumensath2010normalized,Tanner2013normalized} 
	\begin{equation}
		\step_j^L = \frac{\left\| \Projarg{U^j}{\Omega^j}{R^j} \right\|^2_F}{\left\| \cA\left( \Projarg{U^j}{\Omega^j}{R^j} \right)\right\|^2_2} 
		\quad\text{and}\quad 
		\step_j^S = \frac{\left\| \Projarg{U^{j+1}}{\Omega^j}{R^j} \right\|^2_F}{\left\| \cA\left( \Projarg{U^{j+1}}{\Omega^j}{R^j} \right)\right\|^2_2}
	\end{equation}
	where the projection $\Projarg{U^j}{\Omega^j}{R^j},\Projarg{U^{j+1}}{\Omega^j}{R^{j+\frac{1}{2}}}$ ensures that the residual $R^j$ and $R^{j+\frac{1}{2}}$ is projected into the set $\LS_{m,n}(r,s,\mu)$. Then, it follows from the RIC for $\cA$ that the stepsizes~$\step_j^L,\step_j^S$ can be bounded as
	\begin{equation}
		\frac{1}{1+\rict_1} \leq \step_j^{L/S} \leq \frac{1}{1-\rict_1},\label{eq:naht_ric2}
	\end{equation}
	where $\rict_1 := \rict_{r,s,\mu}$. Putting \eqref{eq:naht_ric1} and \eqref{eq:naht_ric2} together
	\begin{equation}
		1- \frac{1+\rict_3 }{1-\rict_1} \leq  \lambda\left(I - \step_j^{L/S} A_Q^T A_Q\right) \leq 1- \frac{1-\rict_3 }{1+\rict_1}. \label{eq:naht_op_bound}
	\end{equation}
	Since $\rict_3 \geq \rict_1$ we have that the magnitude of the lower bound in \eqref{eq:naht_op_bound} is greater than the upper bound. Therefore
	\begin{equation}
		\frac{1+\rict_3}{1-\rict_1} - 1\geq \left\| I- \step^{L/S}_j A_Q^T A_Q\right\|_2 \label{eq:naht_op_bound2}.
	\end{equation}
	
	Finally, the constant on the right hand side of \eqref{eq:nahtAB3b} can be upper bounded
	\begin{align}
		\eta &:= 2\frac{ \left\|I - \step_j^S A_Q^T A_Q \right\|  + \step_j^L \rho_2 }{1 - 2\step_j^S \rho_2 } \label{eq:naht_eta_bound1} \\
		&\leq 2 \frac{ \left(\frac{1+\rict_3}{1-\rict_1} - 1 \right) + \frac{ 1 }{1-\rict_1}\left( \rict_2 + \frac{2\rcc_2}{1-\rcc_2^2}\right) }{1 - 2\frac{ 1 }{1-\rict_1}\left( \rict_2 + \frac{2\rcc_2}{1-\rcc_2^2}\right) } = 2 \frac{\rict_3 + \rict_1 + \rict_2 + \frac{2\rcc_2}{1-\rcc_2^2} }{ 1- \rict_1 - 2\rict_2 - \frac{4\rcc_2}{1-\rcc_2^2} } \label{eq:naht_eta_bound2} \\
		& \leq \frac{6\rict_3 + \frac{4\rcc_2}{1-\rcc_2^2}}{1-3\rict_3 - \frac{4\rcc_2}{1-\rcc_2^2}} \label{eq:naht_eta_bound3}
	\end{align}
	where the inequality in the second line in \eqref{eq:naht_eta_bound2} comes from upper bounds in \eqref{eq:naht_op_bound2} and in \eqref{eq:naht_ric2}, and the third line in \eqref{eq:naht_eta_bound3} follows from $\rict_2 \geq \rict_1$.
	
	\Rev{To ensure that $\eta < 1$, it suffices to show that the right-hand side in \eqref{eq:naht_eta_bound3} is smaller than one, which translates to
	\begin{equation}
		\rict_3 \leq \frac{1}{9}\left(1 - 8\frac{\rcc_2}{1-\rcc^2_2}\right), \label{eq:naht_eta_bound5} 
	\end{equation}
	which is satisfied when $\rict_3 \leq \frac{1}{9} - \rcc_2$.} For $\rict_{3r,3s,\mu} < \frac{1}{9} - \rcc_2$ the inequality in \eqref{eq:nahtAB3b} implies contraction of the error
	\begin{equation}
		\left\|L_0 - L^{j+1} \right\|_F + \left\| S_0 - S^{j+1} \right\|_F \leq  \eta \left(  \left\| L^j - L_0 \right\|_F + \left\| S^j - S_0 \right\|_F \right),
	\end{equation}
	because $\eta < 1$, which guarantees linear convergence of iterates $L^j$ and $S^j$ to $L_0$ and $S_0$ respectively.
\end{proof}

\section{Numerical experiments \label{sec:numerics}}

This section demonstrates the computational efficacy of recoverying a
low-rank plus sparse matrix $X_0\in \LS_{m,n}(r,s,\mu)$ from its
undersampled values $\mathcal{A}(X_0)$.  Section
\ref{sec:num_synthetic} considers synthetic examples where matrices in
$X_0\in \LS_{m,n}(r,s,\mu)$ are created, and recovery from their
undersampled values attempted for the following algorithms: NIHT
(Alg.\ \ref{algo:niht}), NAHT (Alg.\ \ref{algo:naht}), 
SpaRCS~\cite{Waters2011sparcs}, and the convex relaxation \eqref{eq:convex}.  Figure \ref{fig:phase_nonconvex}
presents empirically observed phase transitions, which indicate the
values of model complexity $r,s,$ and measurements $p$ for which
recovery is possible.  Figure 
\ref{fig:convergence1} and \ref{fig:convergence2} gives examples of 
convergence rates for NIHT, NAHT, and SpaRCS, including contrasting
different methods to implement the 
projection NIHT, step 5 of Alg.\ \ref{algo:niht}. Section \ref{sec:applications} presents
applications to dynamic-foreground/static-background and computational
multispectal imaging.  An additional phase transition simulation for
the convex relaxation is given in Appendix \ref{app:convex}.  
Software to reproduce the experiments in this section is publicly
available\footnote{\url{https://github.com/SimonVary/lrps-recovery}}.

\subsection{Empirical average case performance on synthetic data}\label{sec:num_synthetic}

Synthetic matrices $X_0 = L_0 + S_0\in\LS_{m,n}(r,s,\mu)$ are
generated using the experimental setup proposed in the Robust PCA
literature \cite{Netrapalli2014provable,Yi2016fast,
  Cai2019accelerated}. The low-rank component is formed as $L_0 =
UV^T$, where $U\in\pR^{m\times r}, V\in\pR^{n\times r}$ are two random
matrices having their entries drawn i.i.d.\ from the standard Gaussian
distribution. The support set of the sparse component $S_0$ is
generated by sampling a uniformly random subset of $[m]\times [n]$
indices of size $s$ and each non-zero entry $\left(S_0\right)_{i,j}$
is drawn from the uniform distribution over $\left[-\E\left(|
    (L_0)_{i,j}|\right), \E\left( | (L_0)_{i,j} | \right) \right]$.
Each synthetic matrix is measured using linear operators $\A:\pR^{m\times n} \rightarrow \pR^p$. The random Gaussian measurement operators are constructed by $p$ matrices $A^{(\ell)}\in\pR^{m\times n}$ whose entries are sampled from Gaussian distribution $A^{(\ell)}_{i,j}\sim\mathcal{N}(0, 1/p)$ where $p$ is the number of measurements. The Fast Johnson-Lindenstrauss Transform is implemented as
\begin{equation}
	\A_{\mathrm{FJLT}}\left(X\right) = R H D\, \mvec{X}, \label{eq:numerics_fjlt}
\end{equation}
where $R\in\pR^{p \times mn}$ is a restriction matrix constructed from a $mn \times mn$ identity matrix with $p$ rows randomly selected, $H\in\pR^{mn\times mn}$ is discrete cosine transform matrix, $D\in\pR^{mn \times mn}$ is a diagonal matrix whose entries are sampled independently randomly from $\left\{-1,1\right\}$, and $\mvec{X}\in\pR^{mn}$ is the vectorized matrix $X\in\pR^{m \times n}$.

Theorems \ref{thm:rip_ls}, \ref{thm:convex_recovery},
\ref{thm:niht_convergence}, and \ref{thm:naht_convergence} indicate
that recovery of $X_0$ from $\mathcal{A}(X_0)$ depends on the problem
dimensions through the ratios of the number of measurements $p$ with
the ambient dimension $mn$, and the minimum number of measurements,
$r(m+n-r) + s$, through an undersampling and two oversampling ratios 
\begin{equation}\label{eq:delta_rho}
	\delta = \frac{p}{mn}\quad\text{and}\quad \rho_r = \frac{r(m+n-r)}{p},\quad \rho_s=\frac{s}{~p~}.
\end{equation}
The matrix dimensions $m$ and $n$ are held fixed, while $p$, $r$ and $s$ are chosen according to varying parameters $\delta, \rho_r$ and $\rho_s$. For each pair of $\rho_r, \rho_s\in \left\{0, 0.02, 0.04, \ldots, 1
\right\}$ where $\rho_r + \rho_s \leq 1$, with the sampling ratio
restricted to values $\delta \in \left\{0.02, 0.04, \ldots, 1
\right\}$, $20$ simulated recovery tests are conducted and we compute
the critical subsampling ratio $\delta^*$ above which more than half
of the experiments succeeded. For the linear 
transform $\A$ drawn from the (dense) Gaussian distribution, the
highest per iteration cost in NIHT and NAHT comes from applying $\A$
to the residual matrix, which requires $pmn$ scalar multiplications
which scales proportionally to $\left(mn\right)^2$.  For this reason,
our tests are restricted to the matrix size of $m=n=100$ in the
case of NIHT and NAHT, and to a smaller size $m=n=30$ for testing the recovery
by solving the convex relaxation \eqref{eq:convex} with semidefinite programming \cite{Toh1999sdpt3} that has
$\bigO\left((mn)^2\right)$ variables which is more computationally
demanding\footnote{As an example, a low-rank plus sparse matrix with
  $m = n = 100$ with $\rho_r = \rho_s = 0.1$ undersampled and measured
  with Gaussian matrix with $\delta = 0.5$ takes $2.5$ seconds and
  $2.3$ seconds to recover using NIHT and NAHT respectively, while the
  recovery using the convex relaxation takes over $7$ hours.} compared
to the hard thresholding gradient descent methods. 
Algorithms are terminated at iteration $\ell$ when either: the
relative residual error is smaller than $10^{-6}$, that is when
$\|\A(X^\ell) - b\|_2/\|b\|_2 \leq 10^{-6} \|b\|_2$, or the relative
decrease in the objective is small  
\begin{equation}
	\left(\frac{\|\A(X^{\ell}) - b\|_2}{\|\A(X^{\ell-15}) - b\|_2}\right)^{1/15} > 0.999,
\end{equation}
or the maximum of $300$ iterations is reached.  An algorithm is
considered to have successfully recovered $X_0\in\LS_{m,n}(r,s,\mu)$
if it returns a matrix $X^\ell\in\LS_{m,n}(r,s,\mu)$ that is within
$10^{-2}$ of $X_0$  in the relative Frobenius error, $\| X^\ell -
X_0\|_F\leq 10^{-2}\|X_0\|_F$. 

\newcommand\figwidthH{0.4} 
\begin{figure}[t!]
        \centering
         \begin{subfigure}[b]{\figwidthH\textwidth}
 		\includegraphics[width=1\textwidth]{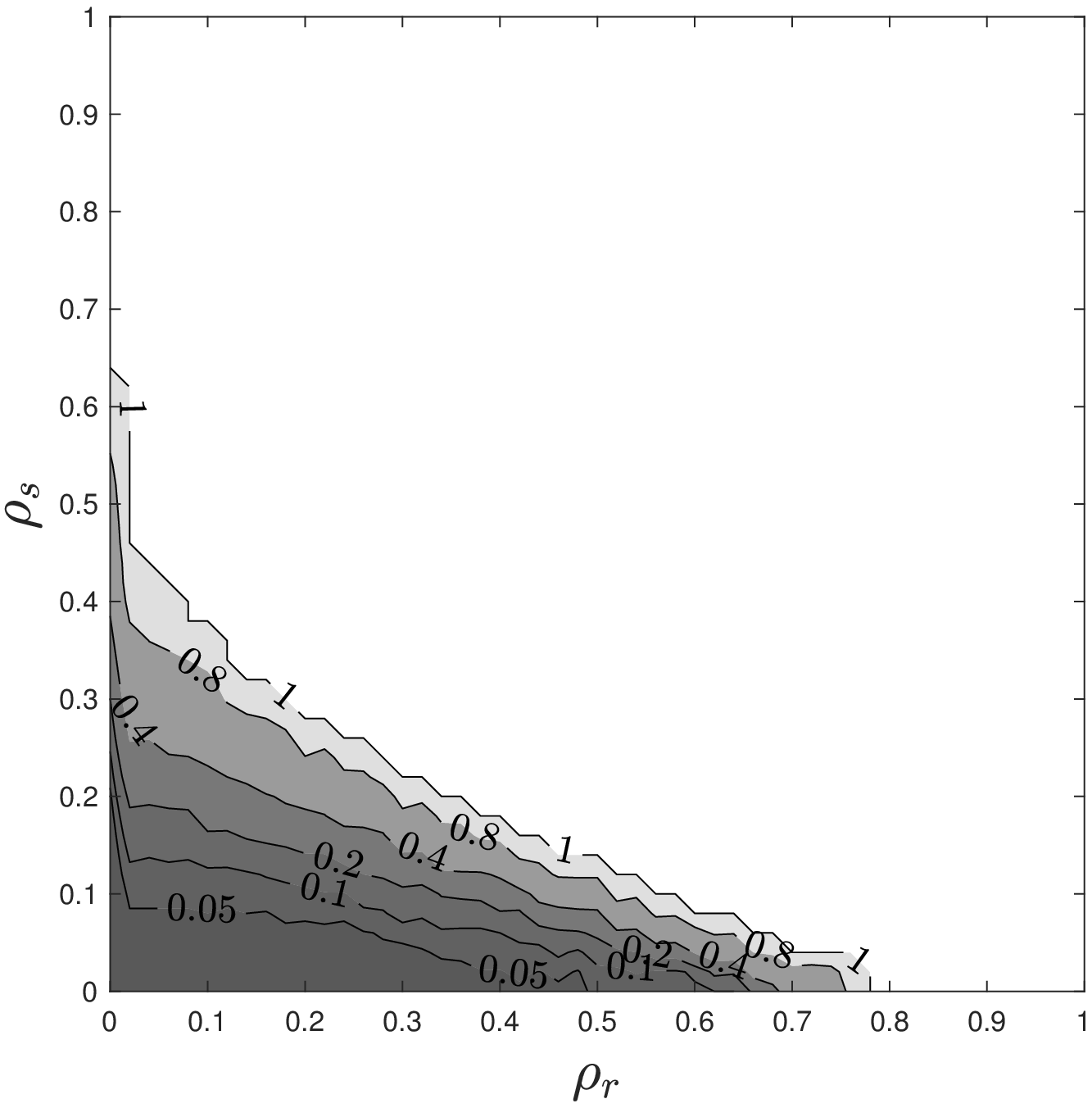}
		\subcaption{NIHT (Gaussian measurements) \label{fig:phase_niht_gausst}}
	\end{subfigure}
	\hspace{0.01\textwidth}
         \begin{subfigure}[b]{\figwidthH\textwidth}
	 	\includegraphics[width=1\textwidth]{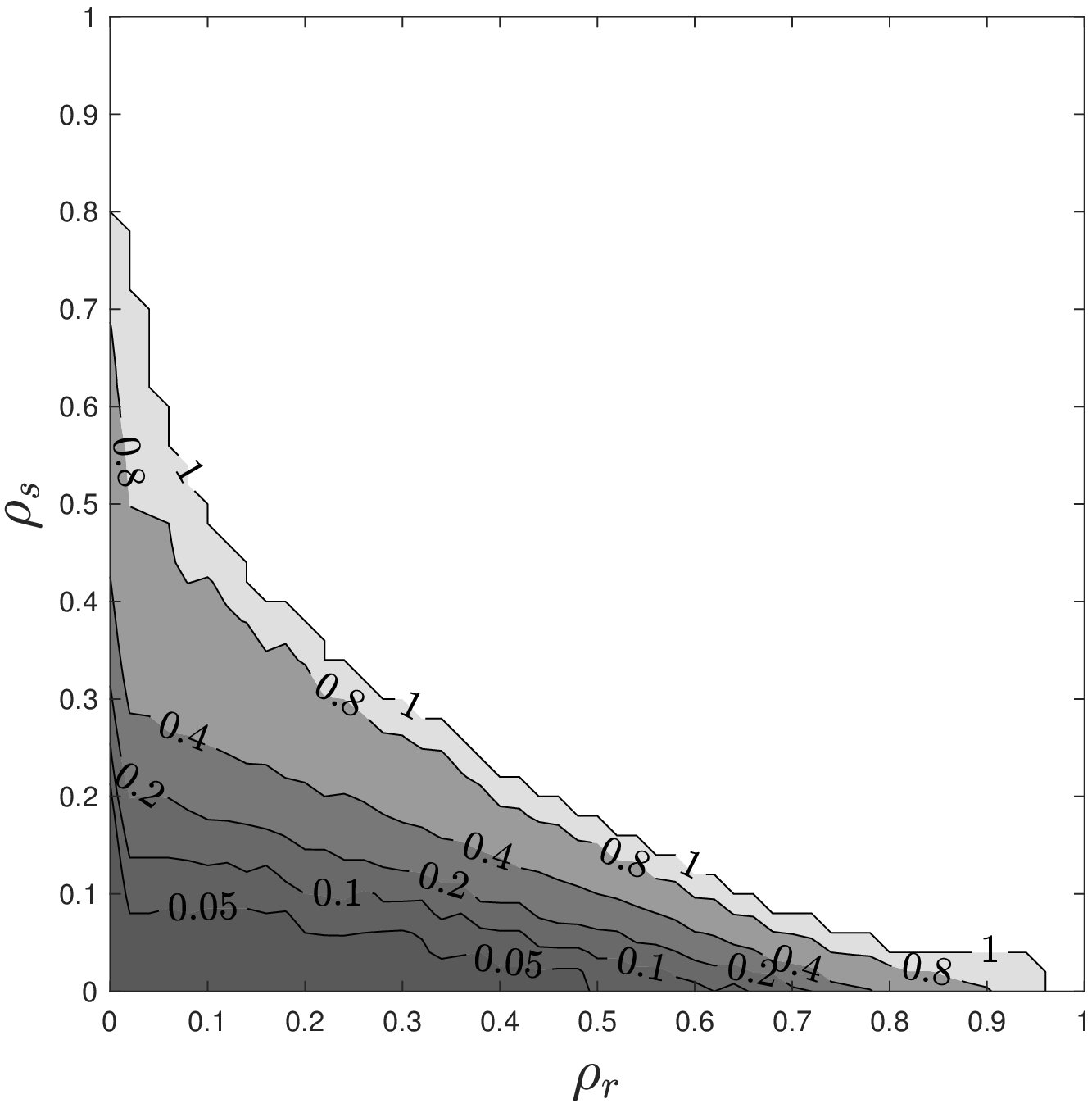}
		\subcaption{NIHT (FJLT measurements) \label{fig:phase_niht_fjlt}}
	\end{subfigure}\\
	\begin{subfigure}[b]{\figwidthH\textwidth}
 		\includegraphics[width=1\textwidth]{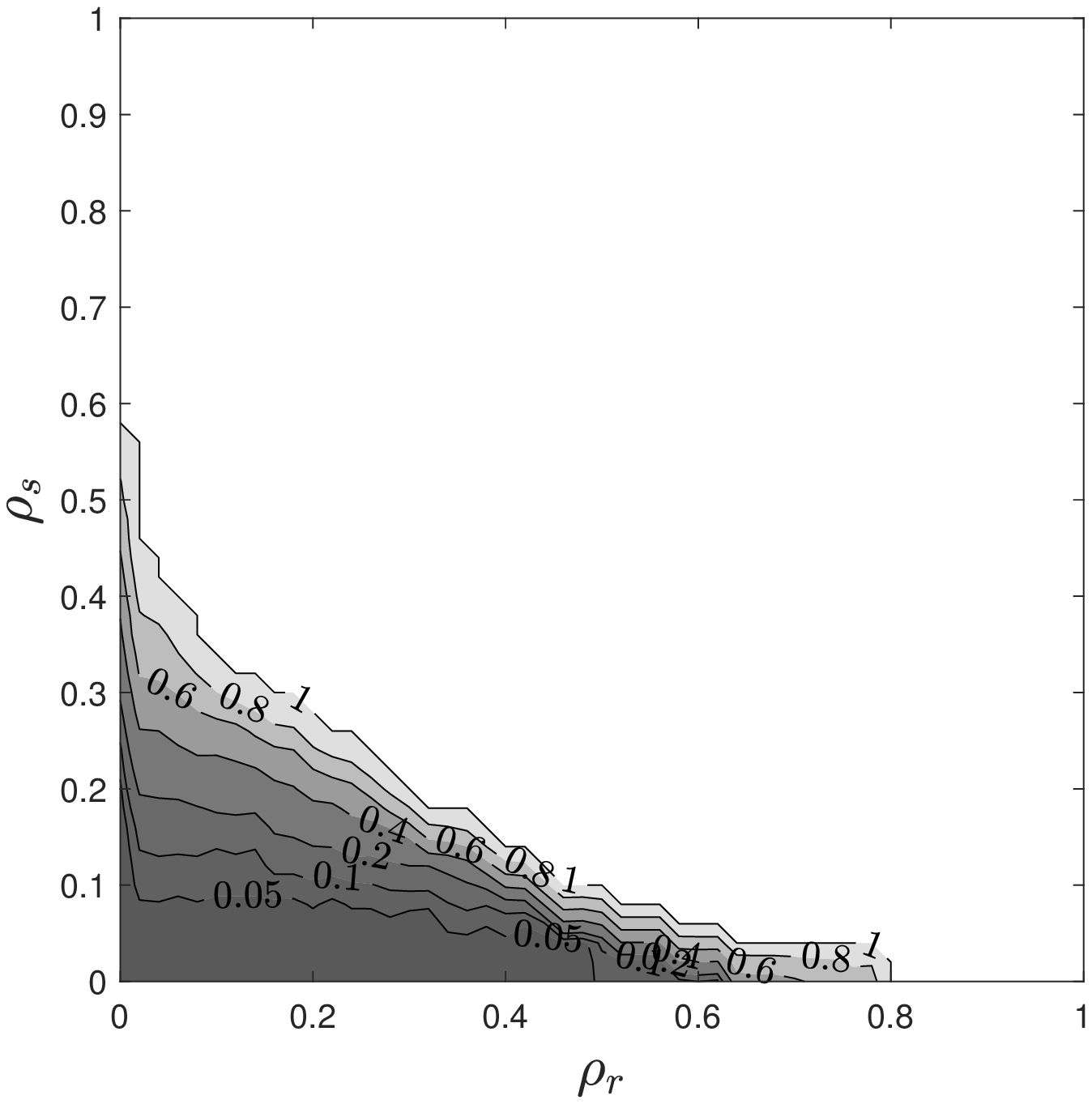}
		\subcaption{NAHT (Gaussian measurements) \label{fig:phase_naht_gausst}}
	\end{subfigure}
	\hspace{0.01\textwidth}
         \begin{subfigure}[b]{\figwidthH\textwidth}
	 	\includegraphics[width=1\textwidth]{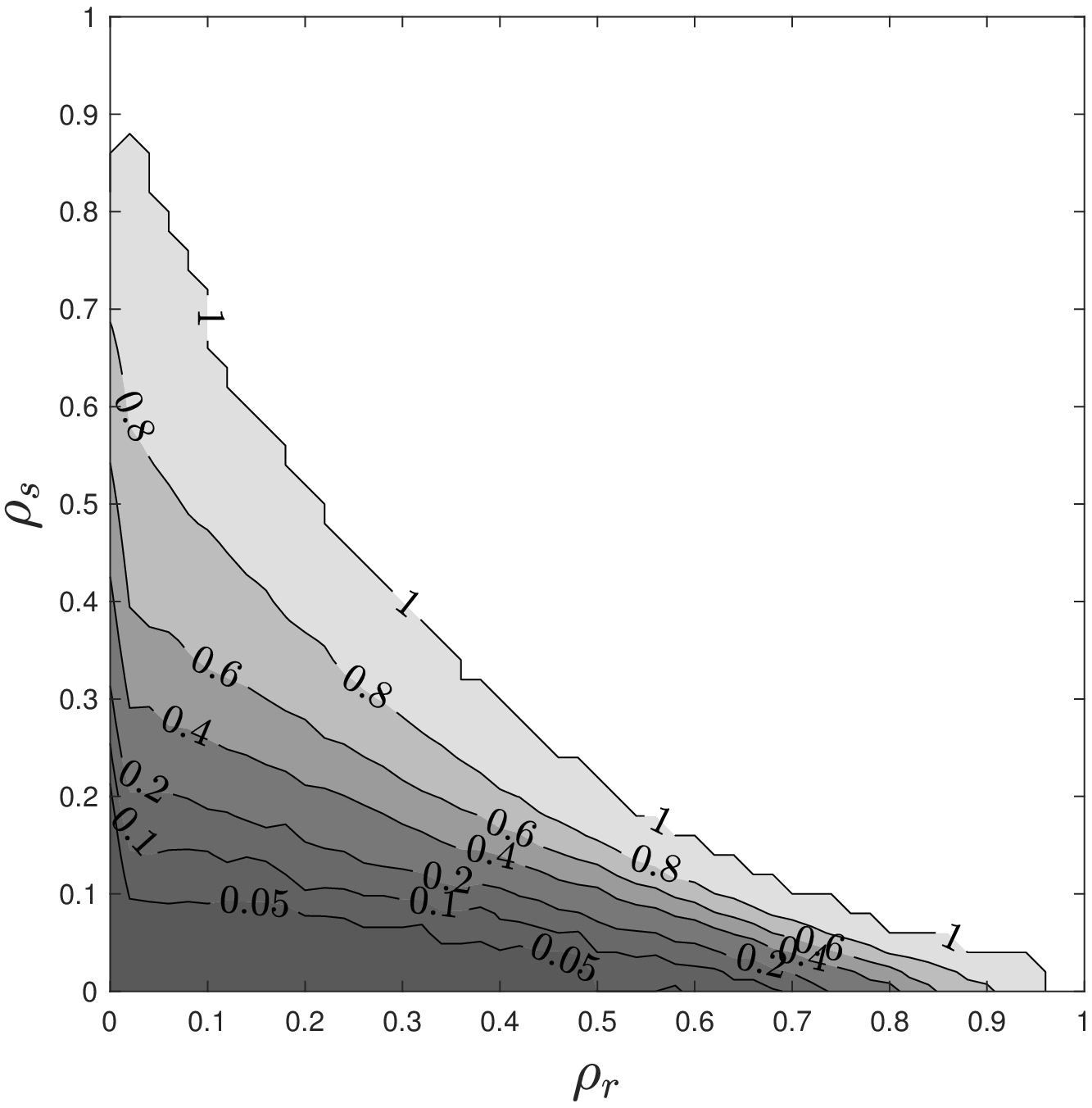}
		\subcaption{NAHT (FJLT measurements) \label{fig:phase_naht_fjlt}}
	\end{subfigure}
        	\caption{Phase transition level curves denoting the
                  value of $\delta^*$ for which values of $\rho_r$ and
$\rho_s$ {\em below which} are recovered for at least half of
the experiments for $\delta$, $\rho_r$, and $\rho_s$ as given by
\eqref{eq:delta_rho}.  NIHT is observed to recover matrices of higher
ranks and sparsities from FJLT than from Gaussian measurements, while
the phase transitions for NIHT and NAHT are comparable. The Robust PCA
projection in NIHT, step 5 in Alg.\ \ref{algo:niht}, is performed by
AccAltProj \cite{Cai2019accelerated}. \label{fig:phase_nonconvex}} 
\end{figure}

Figure~\ref{fig:phase_nonconvex} depicts the  phase transitions of
$\delta$ above which NIHT and NAHT successfully recovers $X_0$ in more
than half of the experiments.  For example, the level curve 0.4 in
Fig.~\ref{fig:phase_nonconvex} denotes the values of $\rho_r$ and
$\rho_s$ {\em below which} recovery is possible for at least half of
the experiments for $p=0.4mn$ and $\rho_r,\rho_s$ as given by
\eqref{eq:delta_rho}.  Note that the bottom left portion of
Fig.~\ref{fig:phase_nonconvex} corresponds to smaller values of model
complexity $(r,s)$ and are correspondingly easier to recover than
larger values of $(r,s)$.  Both algorithms are observed to recover
matrices with prevalent rank structure, $\rho_r \leq 0.6$, even from
very few measurements as opposed to matrices with prevalent sparse
structure requiring in general more measurements for a successful
recovery. Phase transitions corresponding to the sparse-only ($\rho_r
= 0$) and to the rank-only ($\rho_s=0$) cases are roughly in agreement
with phase transitions that have been observed for non-convex
algorithms in compressed sensing~\cite{Blanchard2015performance} and
matrix completion literature~\cite{Tanner2013normalized,
  Blanchard2015cgiht}. We observe that NAHT achieves almost identical
performance to NIHT in terms of possible recovery despite not
requiring the computationally expensive Robust PCA projection in every
iteration. For both algorithms we see that the successful recovery is
possible for matrices with higher ranks and sparsities in the case of
FJLT measurements compared to Gaussian measurements. 


Equivalent experiments are conducted for the convex relaxation
\eqref{eq:convex}, but with smaller matrix size $30\times 30$ and
limited to $10$ simulations for each set of parameters due to the
added computational demands. The convex optimization is formulated
using CVX modeling framework~\cite{Stephen2014cvx} and solved in
Matlab by the semidefinite programming optimization package SDPT3~\cite{Toh1999sdpt3}. We observe that
recovery by solving the convex relaxation is successful for
somewhat lower ranks and sparsities and requiring larger sampling
ratio $\delta$ compared to the non-convex algorithms. The observed
phase transitions of the convex relaxation alongside phase transitions for $m=n=30$ experiments with NIHT and NAHT are depicted in Figure
\ref{fig:phase_cvx} in \ref{app:convex}. Comparing the phase transitions of the non-convex algorithms in Fig.~ \ref{fig:phase_nonconvex} and Fig.~\ref{fig:phase_cvx} show that with the increased problem size, the phase transition are independent of the dimension with only small differences due to finite dimensional effects of the smaller problem size in the case of $m = n = 30$.

\newcommand\figwidthT{0.325} 
\begin{figure}[t!]
        \centering
         \begin{subfigure}[b]{\figwidthT\textwidth}
 		\includegraphics[width=1\textwidth]{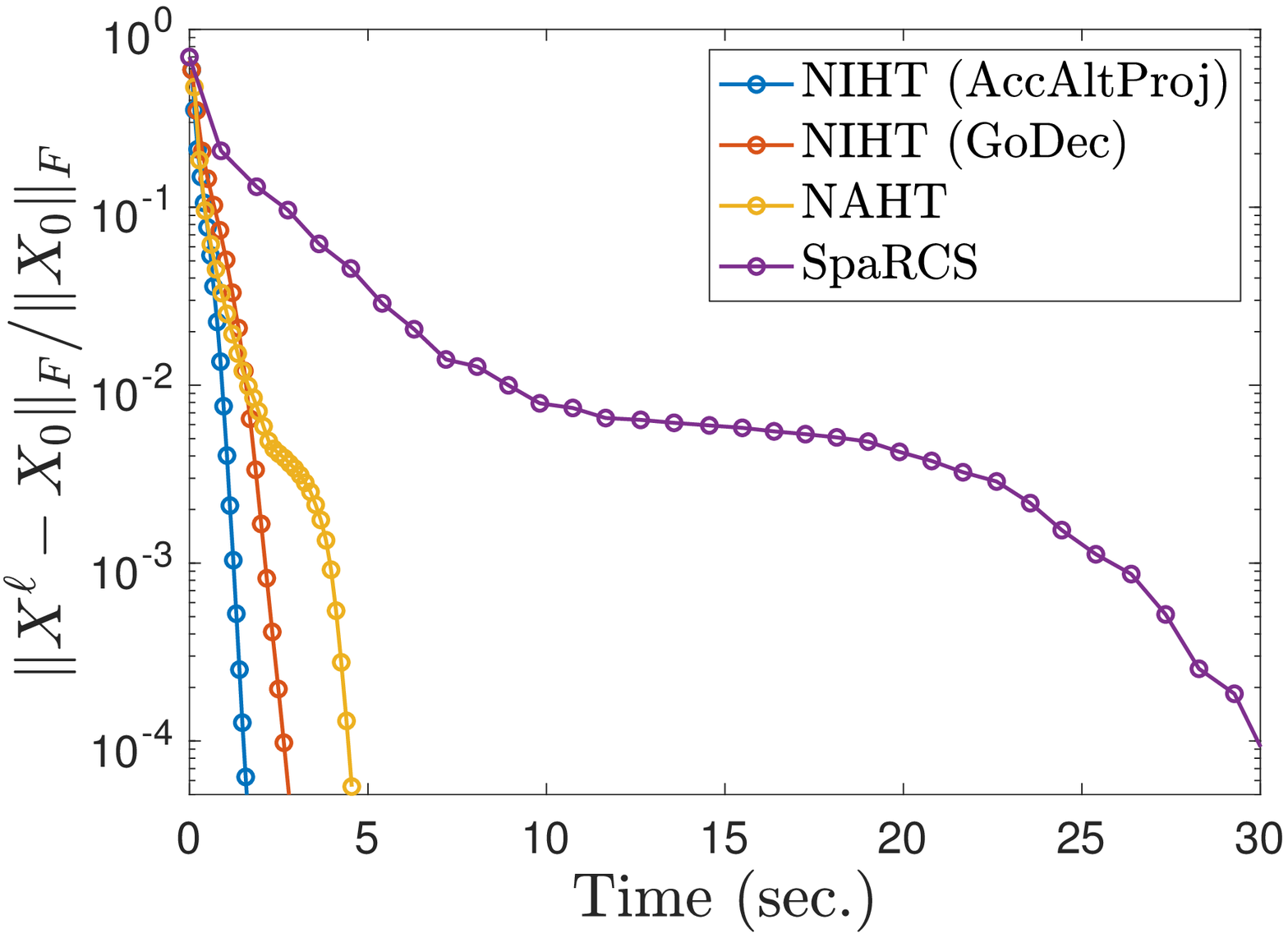}
		\caption{$\rho_r  = \rho_s = 0.05$}
	\end{subfigure}
	\begin{subfigure}[b]{\figwidthT\textwidth}
	 	\includegraphics[width=1\textwidth]{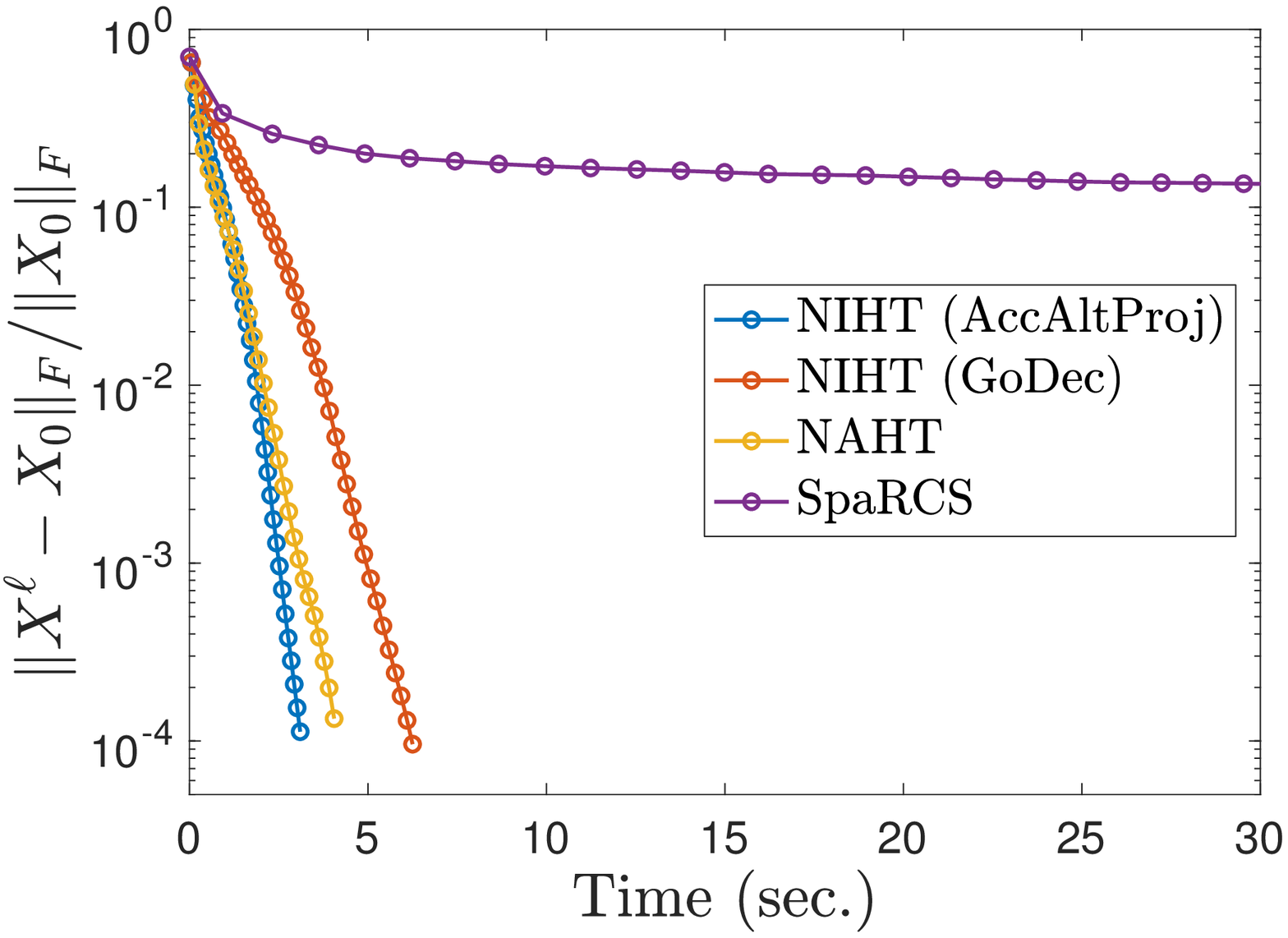}
		\caption{$\rho_r = \rho_s = 0.1$}
	\end{subfigure}
	 \begin{subfigure}[b]{\figwidthT\textwidth}
	 	\includegraphics[width=1\textwidth]{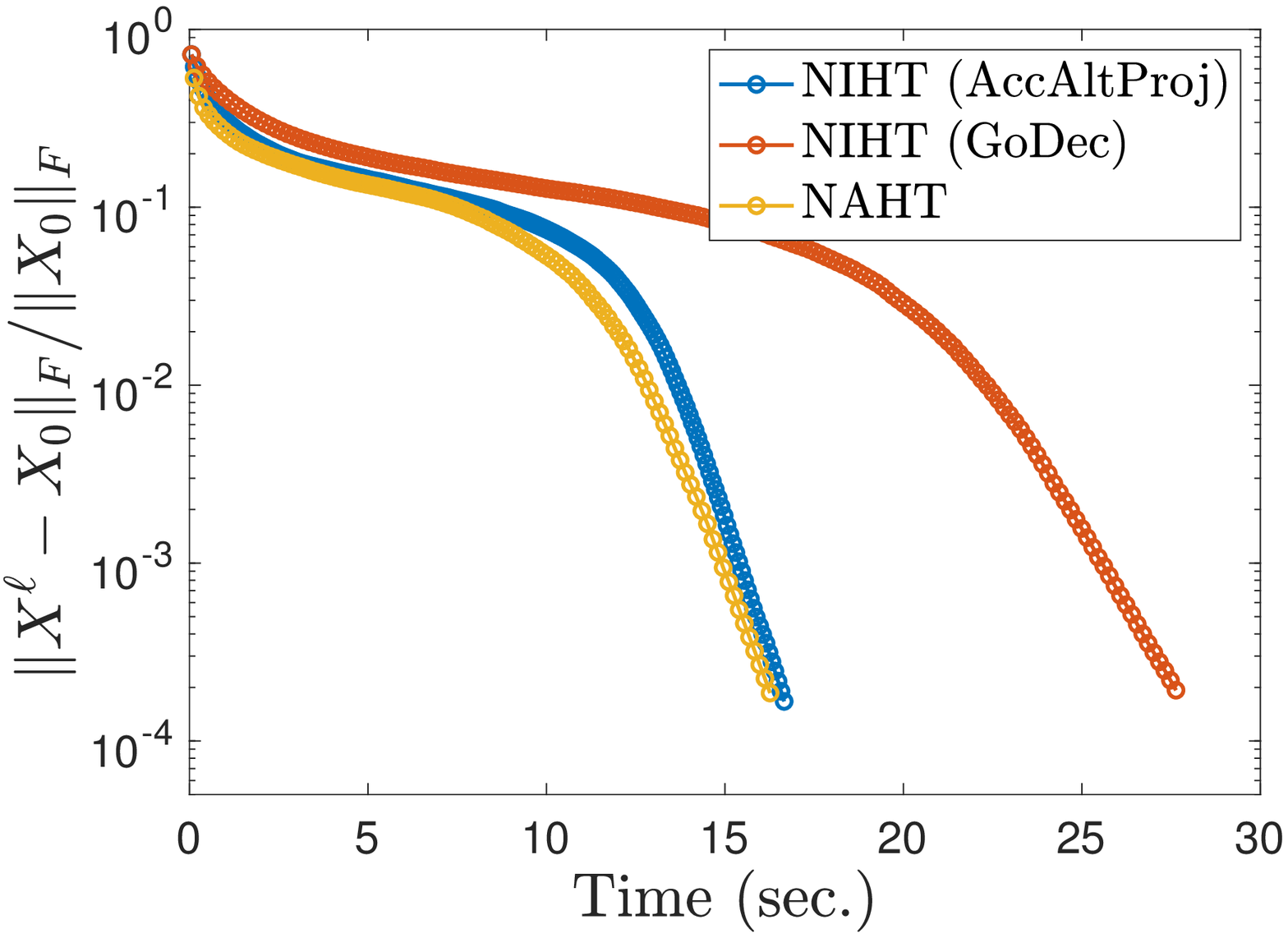}
		\caption{$\rho_r = \rho_s = 0.2$}
	\end{subfigure}
        	\caption{Relative error in the approximate $X^{\ell}$
                  as a function of time for synthetic problems with
                  $m=n=100$ and $p=(1/2)100^2$, $\delta=1/2$, for
                  Gaussian linear measurements $\A$.  In (b), SpaRCS
                  converged in 171 sec. (45 iterations), and in (c),
                  SpaRCS did not converge. \label{fig:convergence1}} 
\end{figure}

Figure~\ref{fig:convergence1} presents convergence timings of Matlab
implementations of the three non-convex algorithms used for recovery
of matrices with $m=n=100$ from $p=(1/2)10^2$ ($\delta=1/2$)
measurements and three values of $\rho_r = \rho_s =
\left\{0.05, 0.1, 0.2 \right\}$. The convergence
results are presented for two variants of NIHT with different Robust
PCA algorithms Accelerated Alternating Projection
(AccAltProj)~\cite{Cai2019accelerated} and Go Decomposition
(GoDec)~\cite{Zhou2011godec} in the projection step 5 of Alg.~\ref{algo:niht}. 
Both NIHT and NAHT converge at a much faster rate than the existing non-convex
algorithm for low-rank plus sparse matrix recovery 
SpaRCS~\cite{Waters2011sparcs}.  All the algorithms take longer to recover
a matrix for increased rank $r$ and/or sparsity $s$.  

The computational efficacy of NIHT compared to NAHT depends on the cost of computing the Robust PCA calculation in comparison to the cost of applying $\A$. NAHT computes two step sizes in each iteration which results into computing $\A$ twice per iteration in comparison to just one such computation per iteration in the case of NIHT. On the other hand, NIHT involves solving Robust PCA in every iteration for the projection step whereas NAHT performs computationally cheaper singular value decomposition (SVD) and sparse hard thresholding projection.

Figure~\ref{fig:convergence2} illustrates the convergence of the
individual low-rank and sparse components $\|L^{\ell}-L_0\|_F$ and
$\|S^{\ell}-S_0\|_F$ as a function of time.  The algorithms are
observed to approximate the the low-rank factor more accurately than
the sparse component and that the computational time increases for
larger values of sparsity fraction $\rho_s$.  Moreover, for both NIHT
and NAHT the relative error of both components decreases together. 

\begin{figure}[t!]
        \centering
         \begin{subfigure}[b]{\figwidthH\textwidth}
 		\includegraphics[width=1\textwidth]{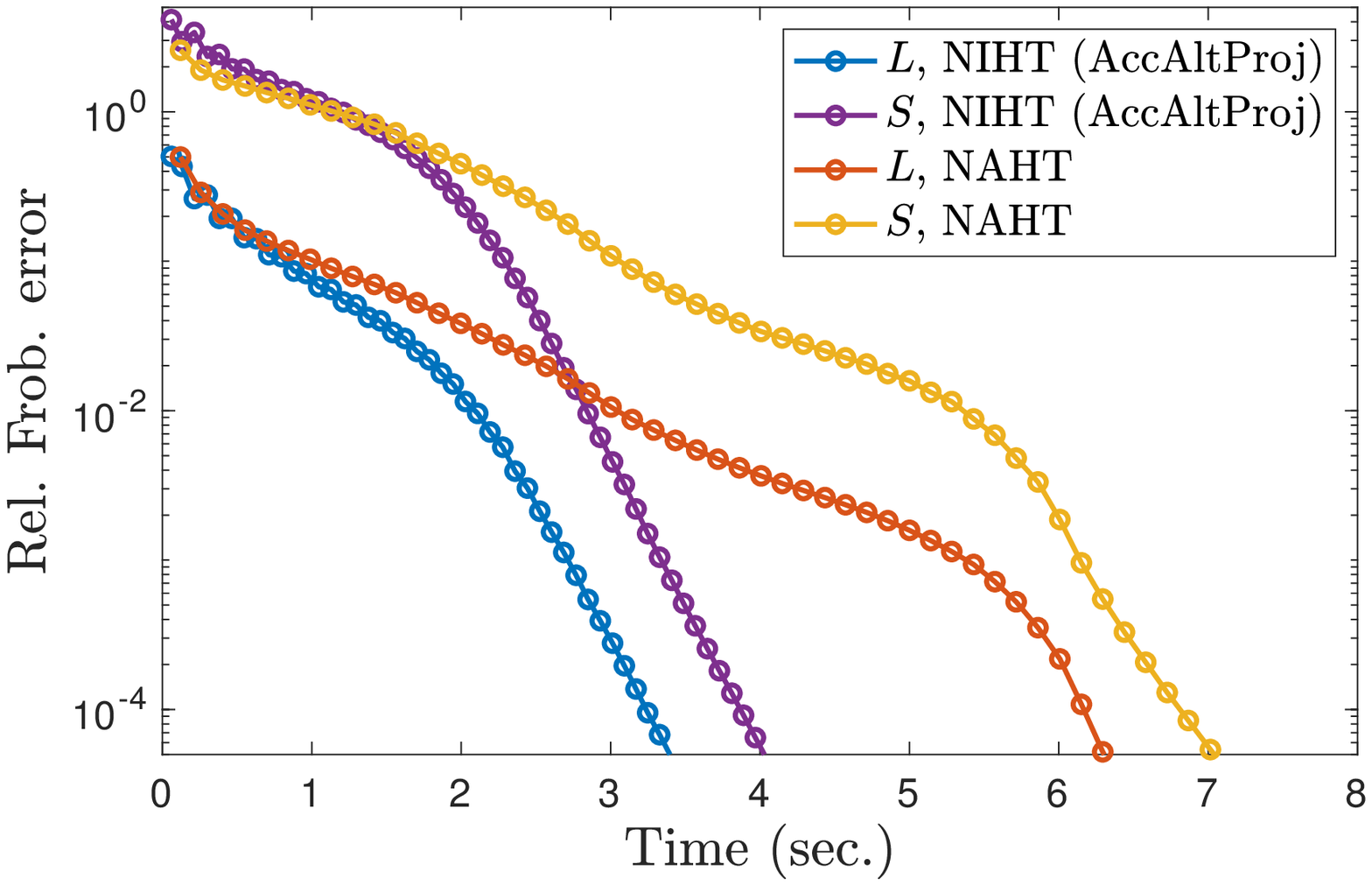}
		\caption{$\delta = 0.5 ,\rho_r = 0.05, \rho_s = 0.15$}
	\end{subfigure}
         \begin{subfigure}[b]{\figwidthH\textwidth}
	 	\includegraphics[width=1\textwidth]{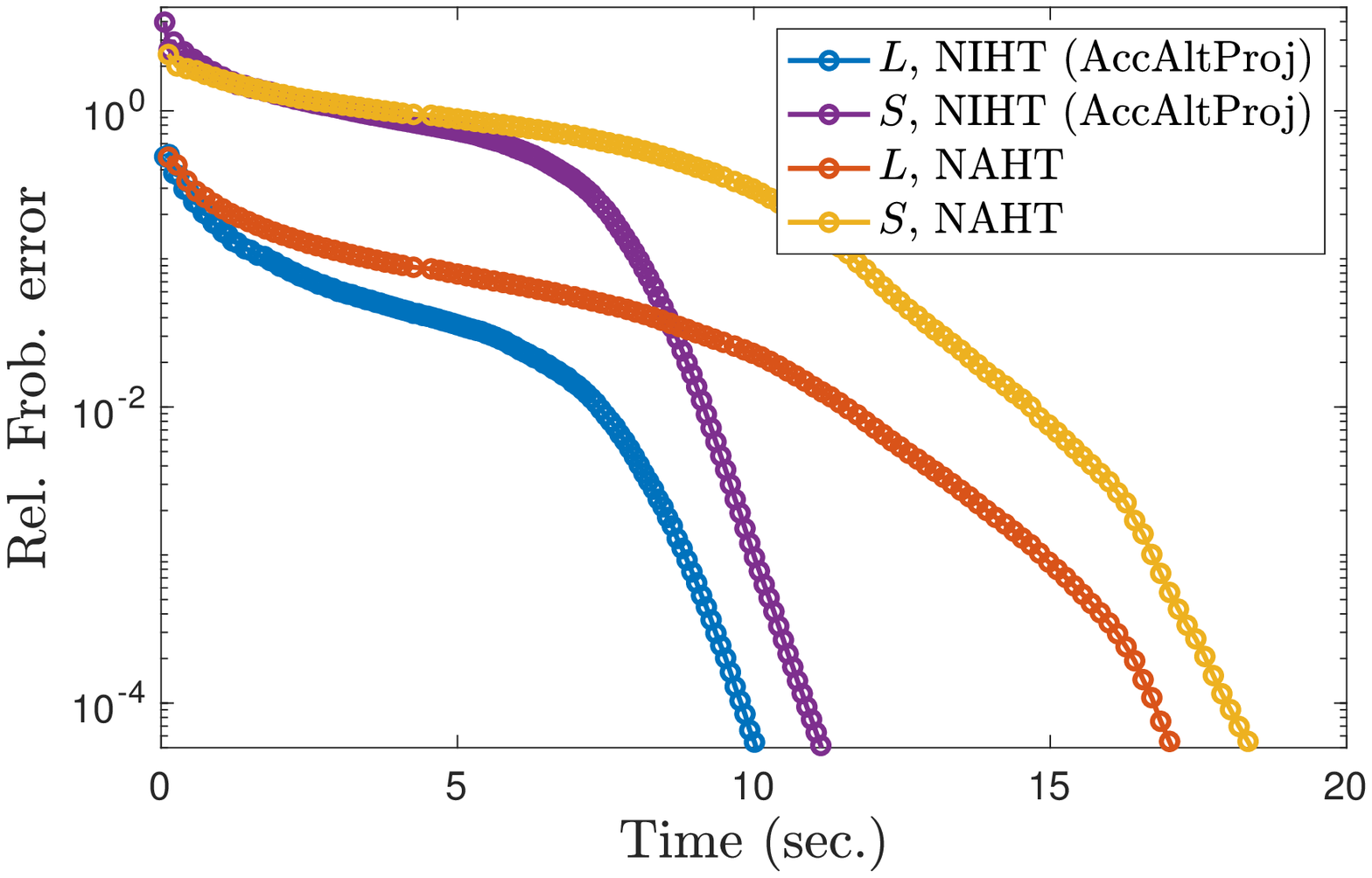}
		\caption{$\delta = 0.5 ,\rho_r = 0.05, \rho_s = 0.25$}
	\end{subfigure}
        	\caption{Error between between the approximate
                  recovered low-rank and sparse components $L^{\ell}$
                  and $S^{\ell}$ and the true low-rank and sparse
                  components $L_0$ and $S_0$.  Error is plotted as a
                  function of recovery time for synthetic problems with
                  $m=n=100$ and $p=(1/2)100^2$, $\delta=1/2$, for
                  Gaussian linear measurements $\A$.  \label{fig:convergence2}}
\end{figure}

\subsection{Applications}\label{sec:applications}

\subsubsection{Dynamic-foreground/static-background video separation}

Background/foreground separation is the task of distinguishing moving
objects from the static-background in a time series, e.g. a video
recording. A widely used approach is to arrange frames of the video
sequence into an $m\times n$ matrix, where $m$ is the number of pixels
and $n$ is the number of frames of the recording and apply Robust PCA
to decompose the matrix into the sum of a low-rank and a sparse
component which model the static background and dynamic foreground
respectively~\cite{Bouwmans2016decomposition}. Herein we consider the
same problem but with the additional challenge of recovering the video
sequence from subsampled information~\cite{Waters2011sparcs} analogous
to compressed sensing.  
\newcommand\figwidthO{0.85} 
\def\LW{\dimexpr.15\linewidth-.5em}
\begin{figure}[!t]
	\centering
	  \begin{subfigure}[b]{\figwidthO\textwidth}
		\hspace{-\LW}
		\parbox{\LW}{
			\subcaption{$X_\mathrm{rpca}$ \label{fig:video_Xrpca}}
		}\parbox{\LW}{
			\includegraphics[width=1\textwidth]{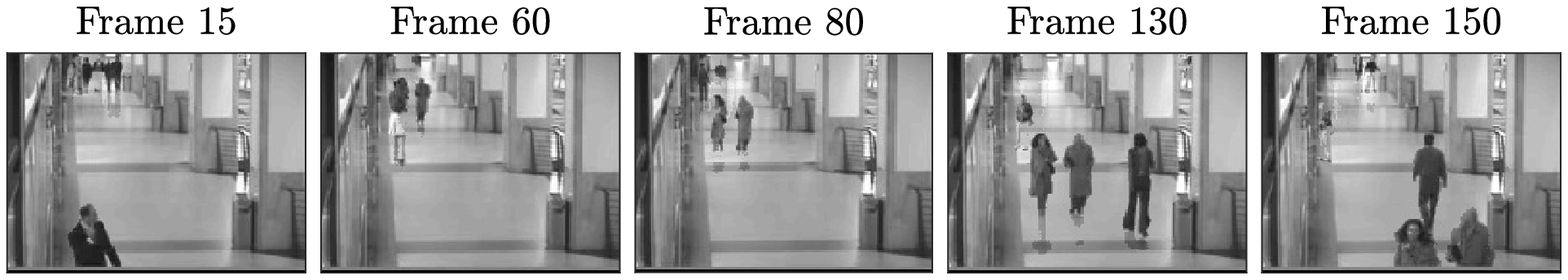}
		}\hfill%
    \end{subfigure}
       \begin{subfigure}[b]{\figwidthO\textwidth}
		\hspace{-\LW}
		\parbox{\LW}{
			\subcaption{$X_\mathrm{niht}$ \label{fig:video_Xniht}}
		}\parbox{\LW}{
			\includegraphics[width=1\textwidth]{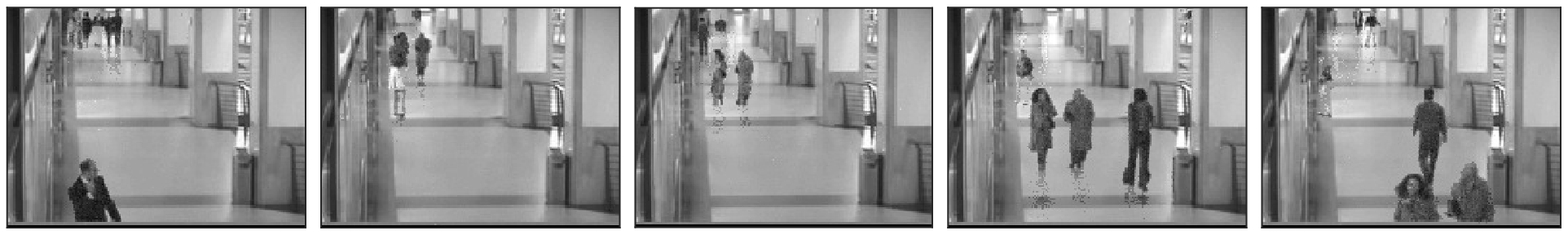}
		}\hfill%
    \end{subfigure}
     \begin{subfigure}[b]{\figwidthO\textwidth}
		\hspace{-\LW}
		\parbox{\LW}{
			\subcaption{$S_\mathrm{rpca}$ \label{fig:video_Srpca}}
		}\parbox{\LW}{
			\includegraphics[width=1\textwidth]{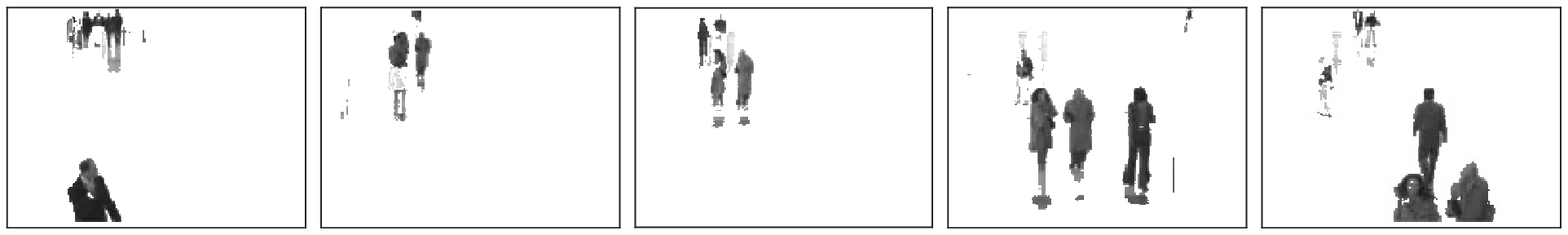}
		}\hfill%
    \end{subfigure}
    \begin{subfigure}[b]{\figwidthO\textwidth}
		\hspace{-\LW}
		\parbox{\LW}{
			\subcaption{$S_\mathrm{niht}$ \label{fig:video_Sniht}}
		}\parbox{\LW}{
			\includegraphics[width=1\textwidth]{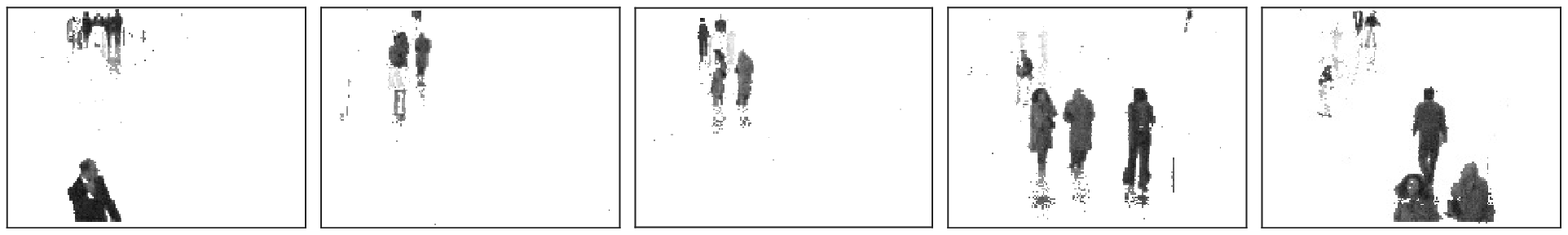}
		}\hfill%
    \end{subfigure}
    \caption{\label{fig:video}NIHT recovery results of a $256 \times
      256\times150$ video sequence compared to the approximation of
      the complete video sequence by Robust PCA
      (AccAltProj~\cite{Cai2019accelerated}). The video sequence is
      reshaped into a $26\,600 \times 150$ matrix and either recovered
      from FJLT measurements with $\delta = 0.33$ using rank $r = 1$
      and sparsity $s = 197\,505$ or approximated from the full video sequence by computing Robust PCA by AccAltProj with the same rank and sparsity parameters. Recovery by NIHT from subsampled information achieves PSNR of~$34.5~\si{\deci\bel}$ whereas the Robust PCA approximation from the full video sequence achieves PSNR of~$35.5~\si{\deci\bel}$.
    }
\end{figure}

We apply NIHT, Alg.\ \ref{algo:niht}, to the well studied shopping
mall surveillance~\cite{Li2004statistical} which is $256 \times 256
\times 150$ video sequence. The video sequence is rearranged into a
matrix of size $26\,600\times 150$ and measured using subsampled FJLT
\eqref{eq:numerics_fjlt} with one third as many meausrements as the ambient dimension,
$\delta = 0.33$.  The static-background is modeled with a rank-$r$ matrix with $r = 1$ and the dynamic-foreground by an $s$-sparse matrix with $s =
197\,505$ ($\rho_r = 0.02,\,\rho_s = 0.15$).  Figure \ref{fig:video}
displays the reconstructed image $X_{niht}$ and its sparse component
$S_{niht}$ alongside the results obtained from applying Robust PCA
(AccAltProj~\cite{Cai2019accelerated})  which makes use of the fully
sampled video sequence rather than the one-third measurements
available to NIHT.  NIHT accurately estimates the video sequence
achieving PSNR of $34.5~\si{\deci\bel}$ while also separating the
low-rank background from the sparse foreground. The results are of a
similar visual quality to the case of Robust PCA that achieves PSNR of
$35.5~\si{\deci\bel}$ which requires access to the full video sequence.

\subsubsection{Computational multispectral imaging}
A multispectral image captures a wide range of light spectra generating a vector of spectral responses at each image pixel thus acquiring information in the form of a third order tensor. 
Low-rank model has a vital role in multispectral imaging in the form of a linear spectral mixing models that assume the spectral responses of the imaged scene are well approximated as a linear combination of spectral responses of only few core materials referred to as \emph{endmembers}~\cite{Dimitris2003hyperspectral}. 
As such, the low-rank structure can be exploited by computational imaging systems which acquire the image in a compressed from and use computational methods to recover a high-resolutional image~\cite{Cao2016computational,Degraux2015generalized, Antonucci2019multispectral}.
However, when different materials are in close proximity the resulting spectrum can be highly nonlinear combination of the~\emph{endmembers} resulting in anomalies of the model~\cite{Stein2002anomaly}. Herein we propose the low-rank plus sparse matrix recovery as a way to model the spectral anomalies in the low-rank structure. 

\begin{figure}[t!]
        \centering
         \begin{subfigure}[b]{\figwidthT\textwidth}
 		\includegraphics[width=1\textwidth]{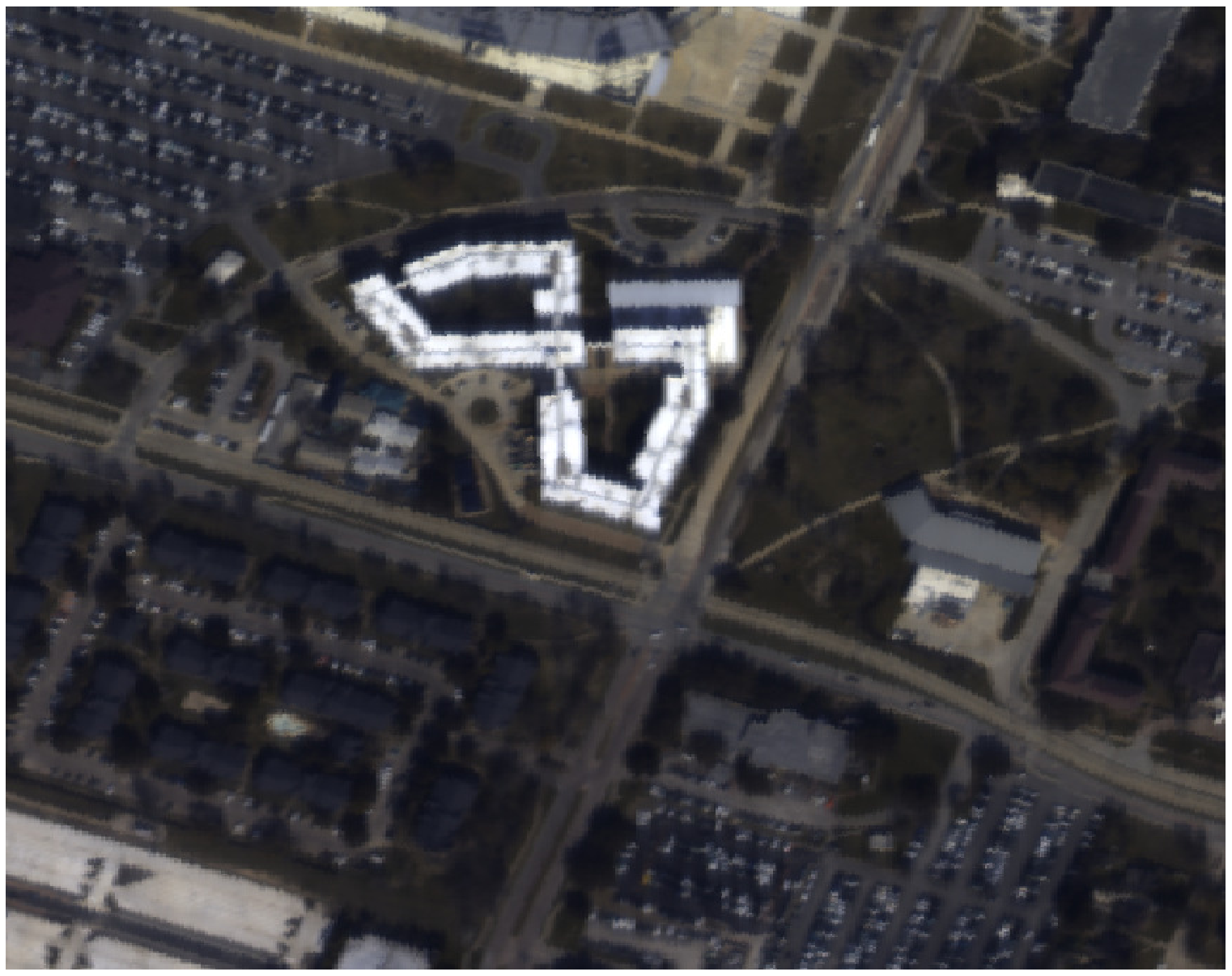}
		\subcaption{Groundtruth $\mathcal{X_\mathrm{true}}$\label{fig:hsi_truth}}
	\end{subfigure}
	\begin{subfigure}[b]{\figwidthT\textwidth}
		 \includegraphics[width=1\textwidth]{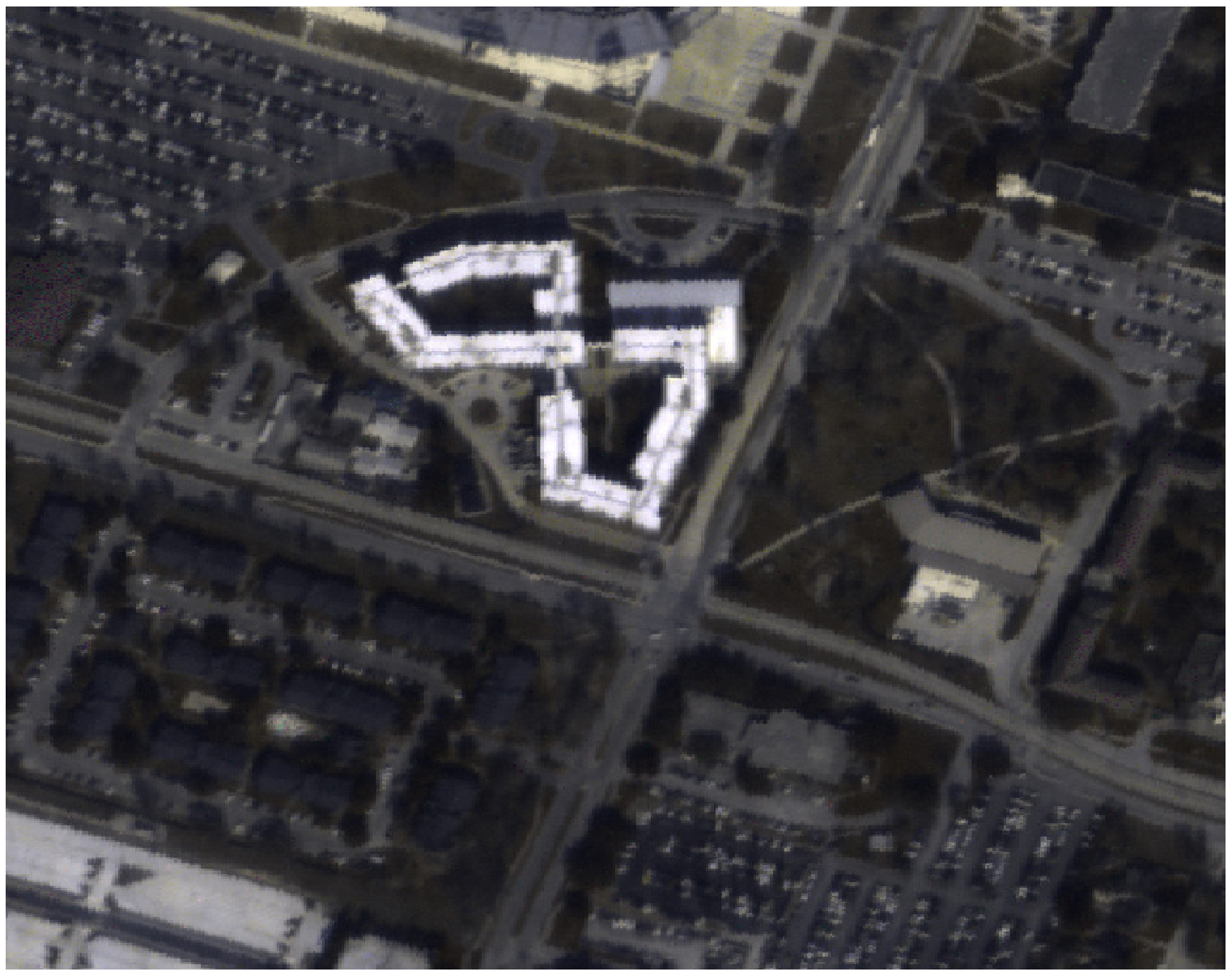}
		\subcaption{Low-rank plus sparse $\mathcal{X_\mathrm{niht}}$\label{fig:hsi_niht}}
	\end{subfigure}
	 \begin{subfigure}[b]{\figwidthT\textwidth}
	 	 \includegraphics[width=1\textwidth]{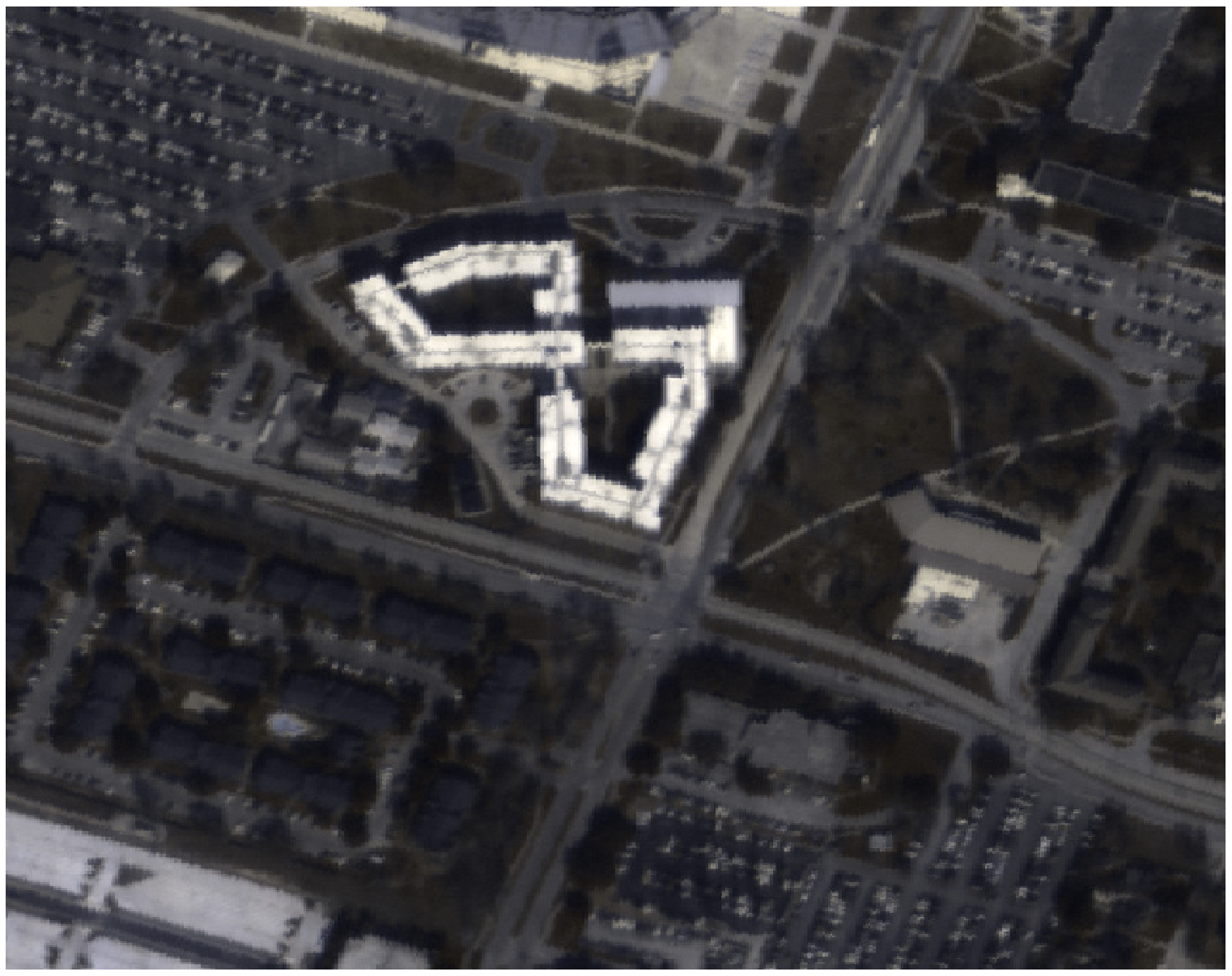}
		\subcaption{Low-rank $\mathcal{X_\mathrm{mc}}$\label{fig:hsi_mc}}
	\end{subfigure}
	\begin{subfigure}[h]{0.32\textwidth}
		\centering
		\begin{subfigure}[t]{1\textwidth}
			\centering
	 		\includegraphics[width=1\textwidth]{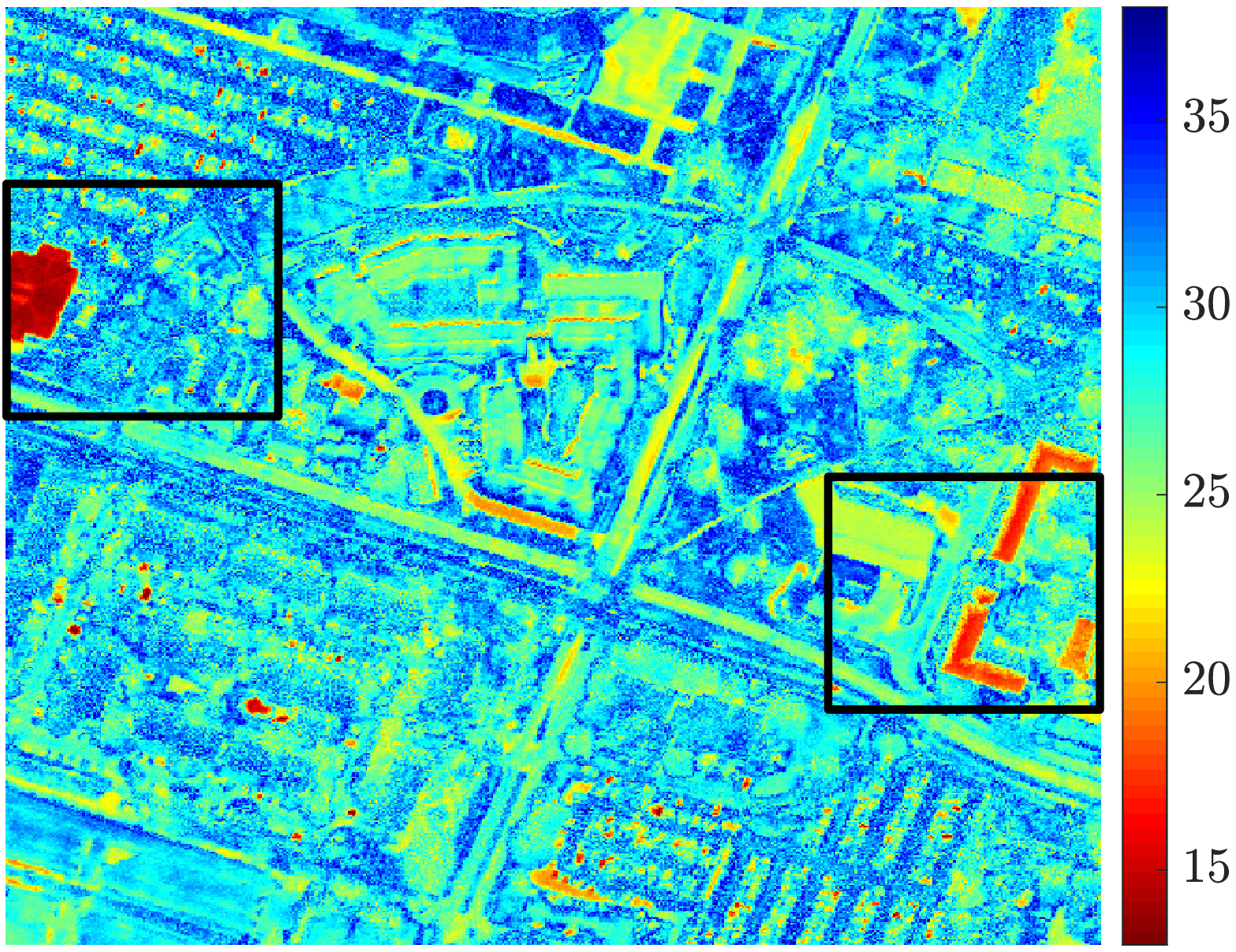}
			\subcaption{PSNR (low-rank)\label{fig:hsi_mc_psnr}}
		\end{subfigure}
		\begin{subfigure}[t]{1\textwidth}
			\centering
	 		\includegraphics[width=1\textwidth]{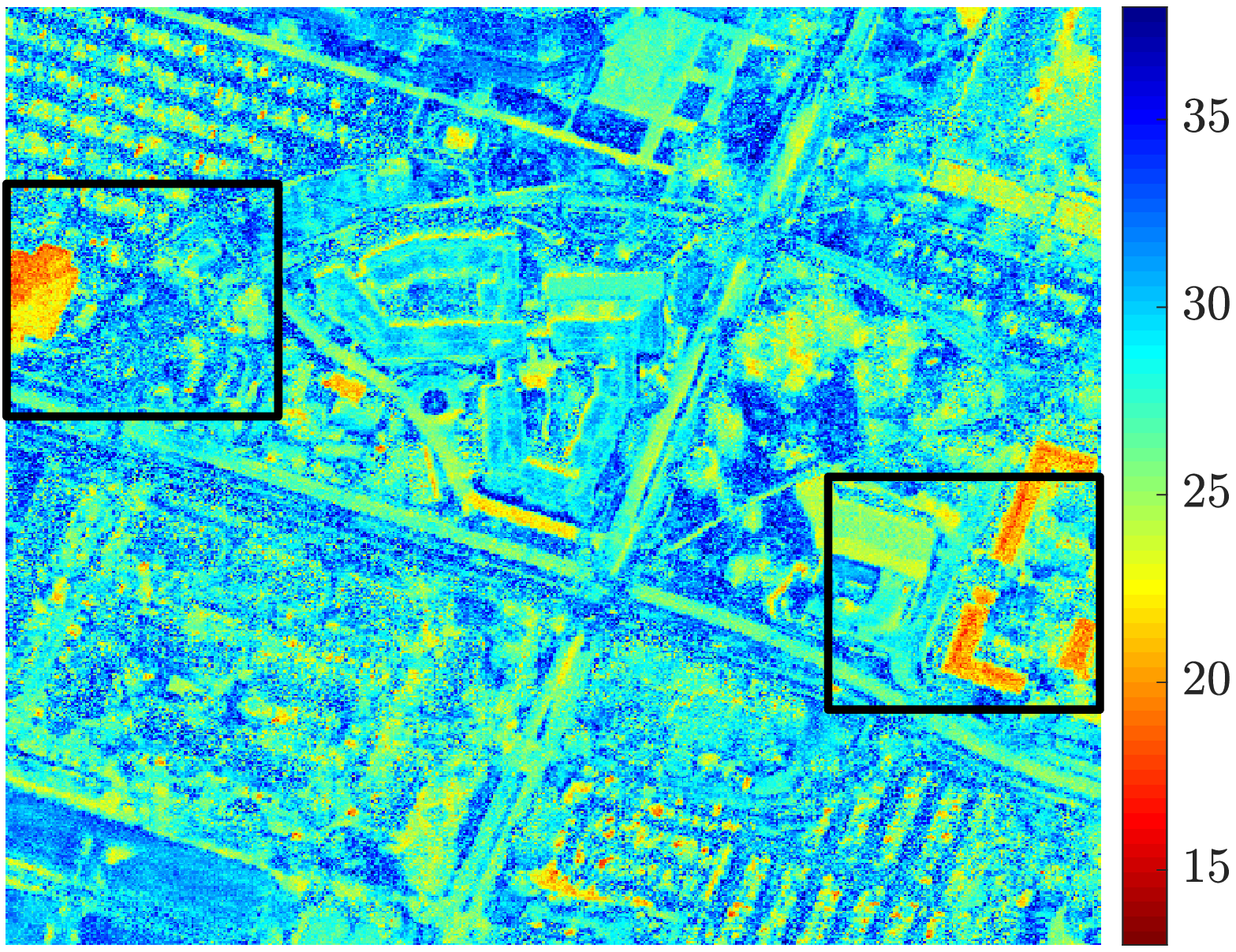}
			\subcaption{PSNR (low-rank plus sparse)\label{fig:hsi_niht_psnr}}
		\end{subfigure}
	\end{subfigure}
	\hspace{3em}
	\begin{subfigure}[h]{0.225\textwidth}
		\centering
	 	\hspace{-9\LW}
		\parbox{\LW}{
			$\mathcal{X}_\mathrm{true}$
		}\hspace{1.2em}
		\parbox{\LW}{
			\includegraphics[width=1\textwidth]{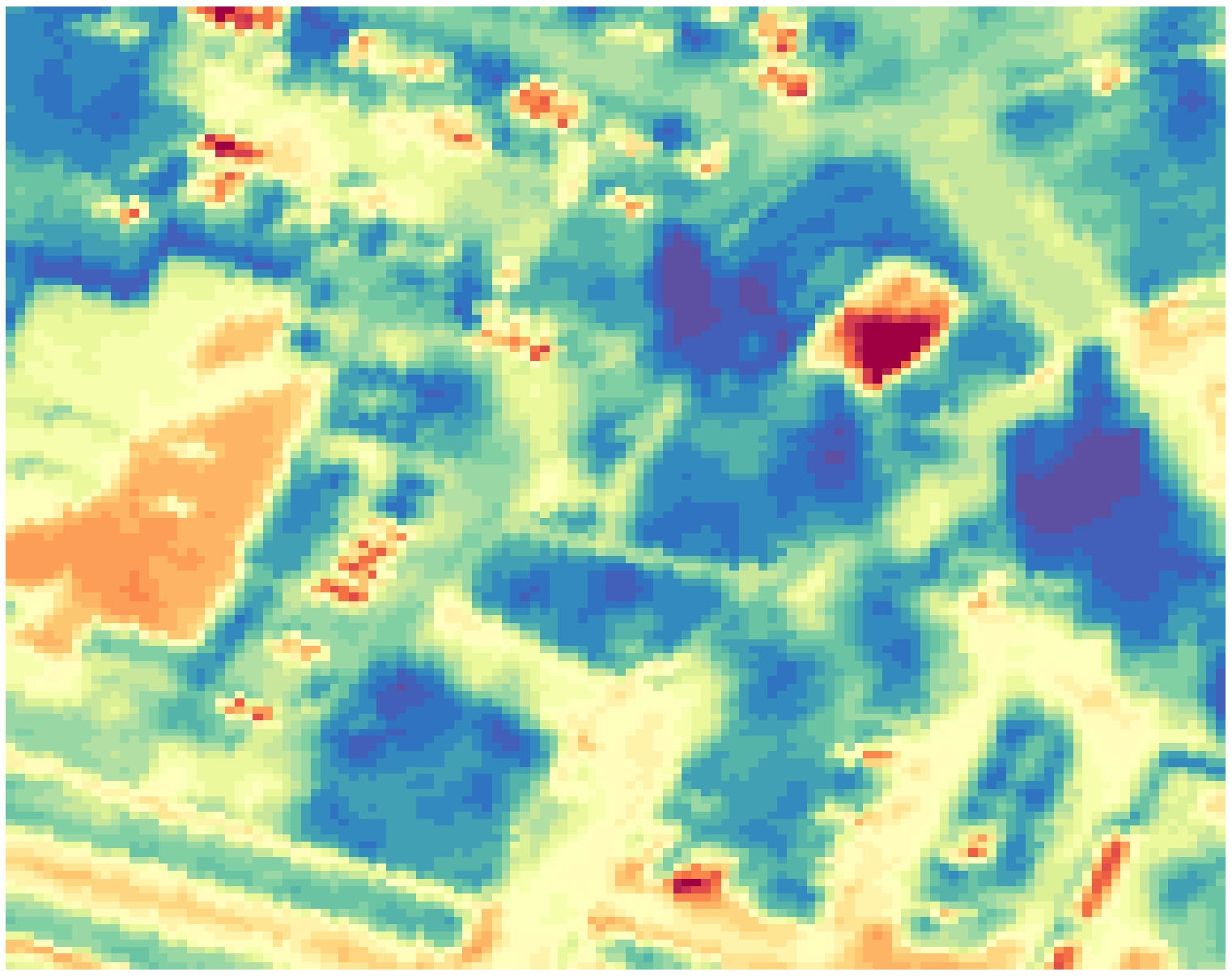}
		}\hfill \\
		\hspace{-9\LW}
		\parbox{\LW}{
			$\mathcal{X}_\mathrm{niht}$
		}\hspace{1.2em}
		\parbox{\LW}{
			\includegraphics[width=1\textwidth]{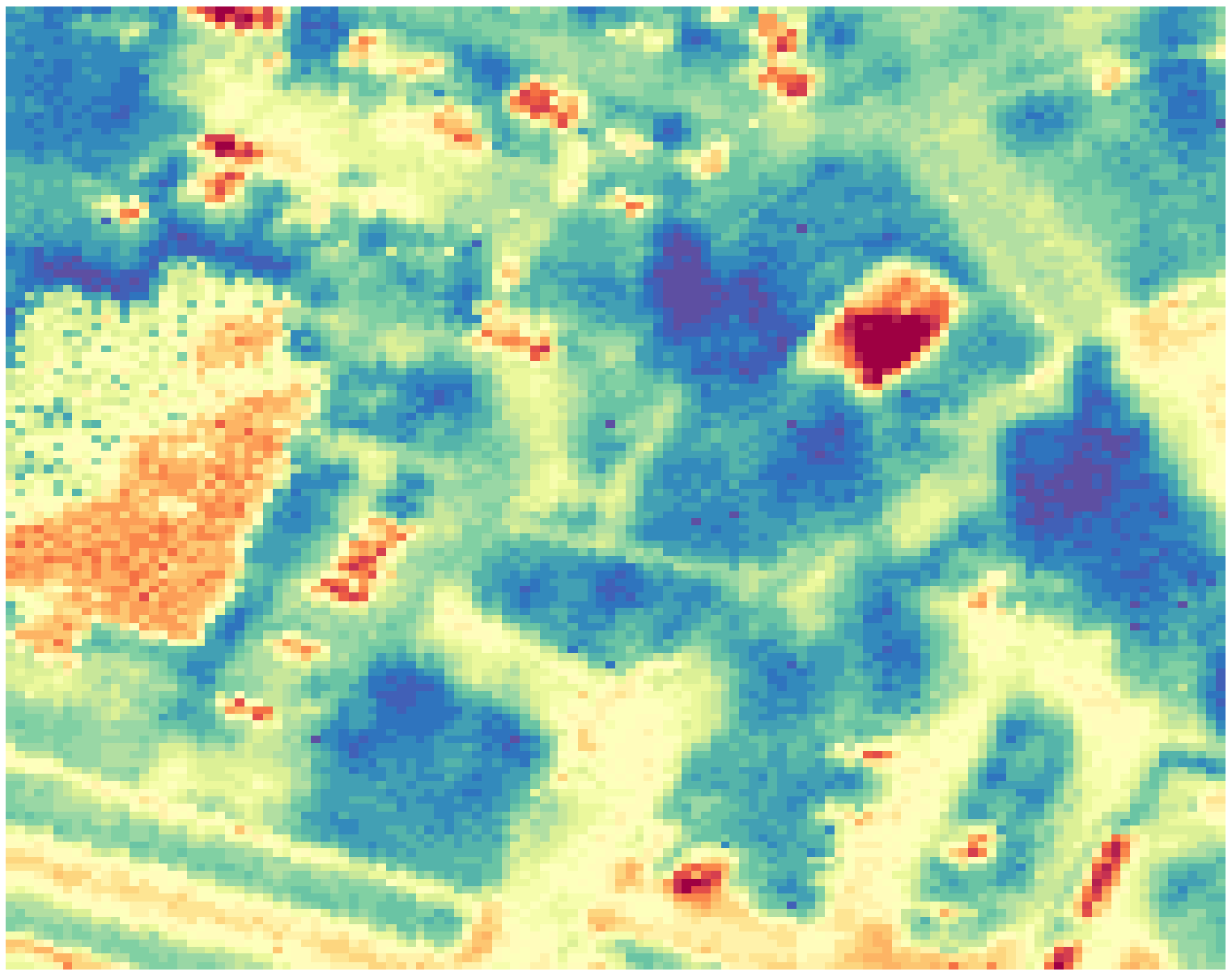}
		}\hfill \\
		\hspace{-9\LW}
		\parbox{\LW}{
			$\mathcal{X}_\mathrm{mc}$
		}\hspace{1.2em}
		\parbox{\LW}{
			\includegraphics[width=1\textwidth]{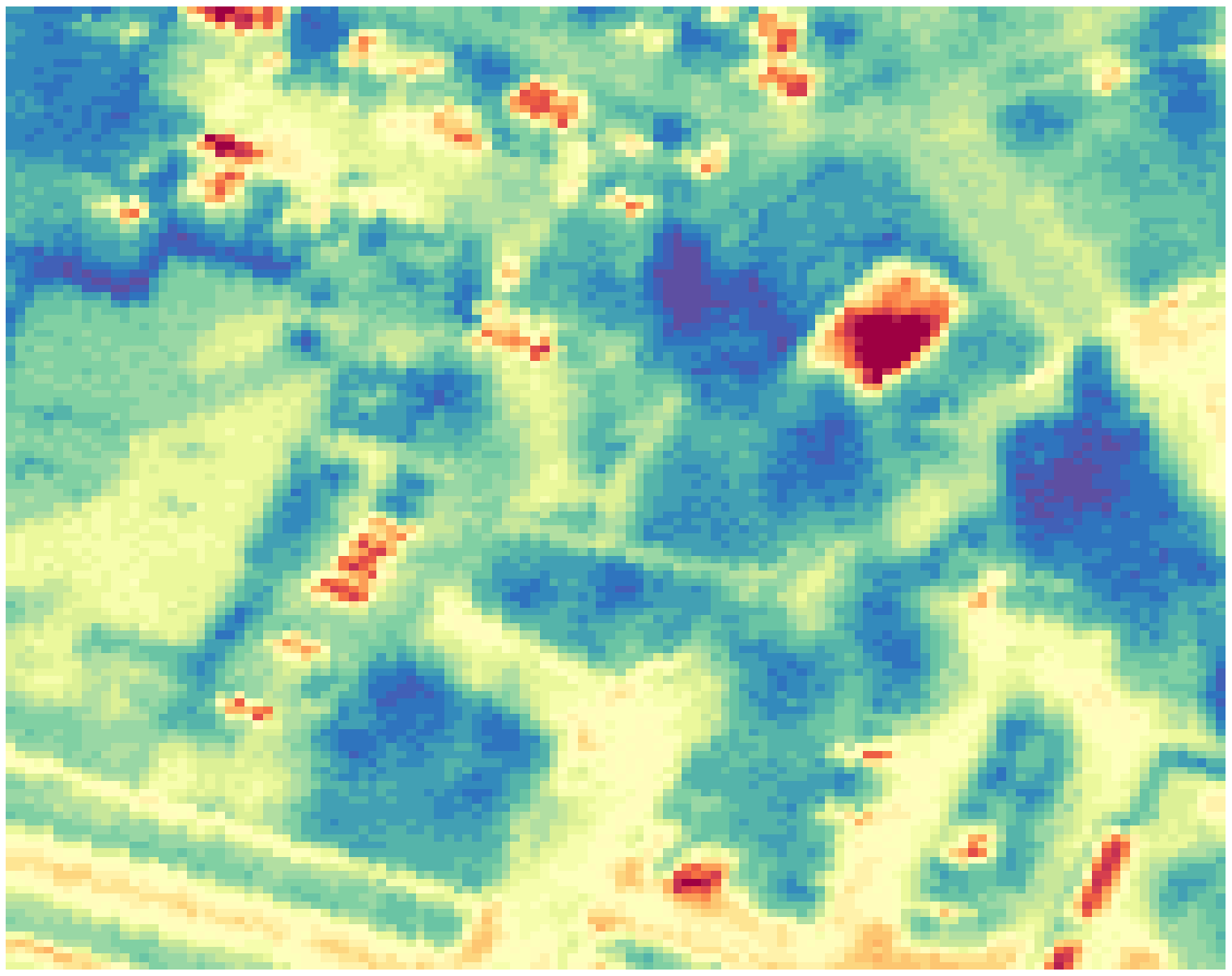}
		}\hfill \\
		\subcaption{Detail 1 ($694~\si{\nano\meter}$)\label{fig:hsi_detail1}}
	\end{subfigure}
	\hspace{1em}
	\begin{subfigure}[h]{0.225\textwidth}
		\centering
	 	\includegraphics[width=1\textwidth]{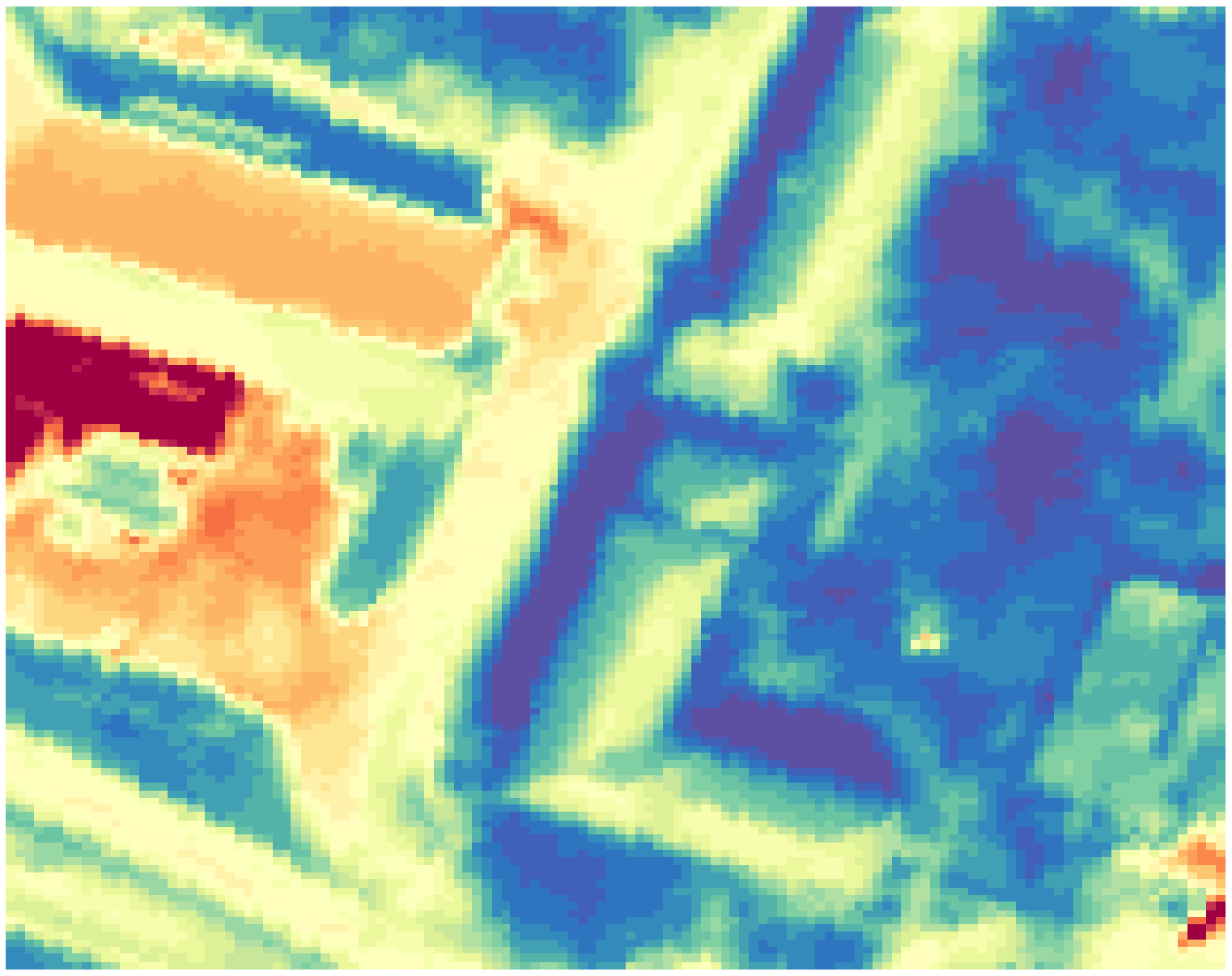}
		\includegraphics[width=1\textwidth]{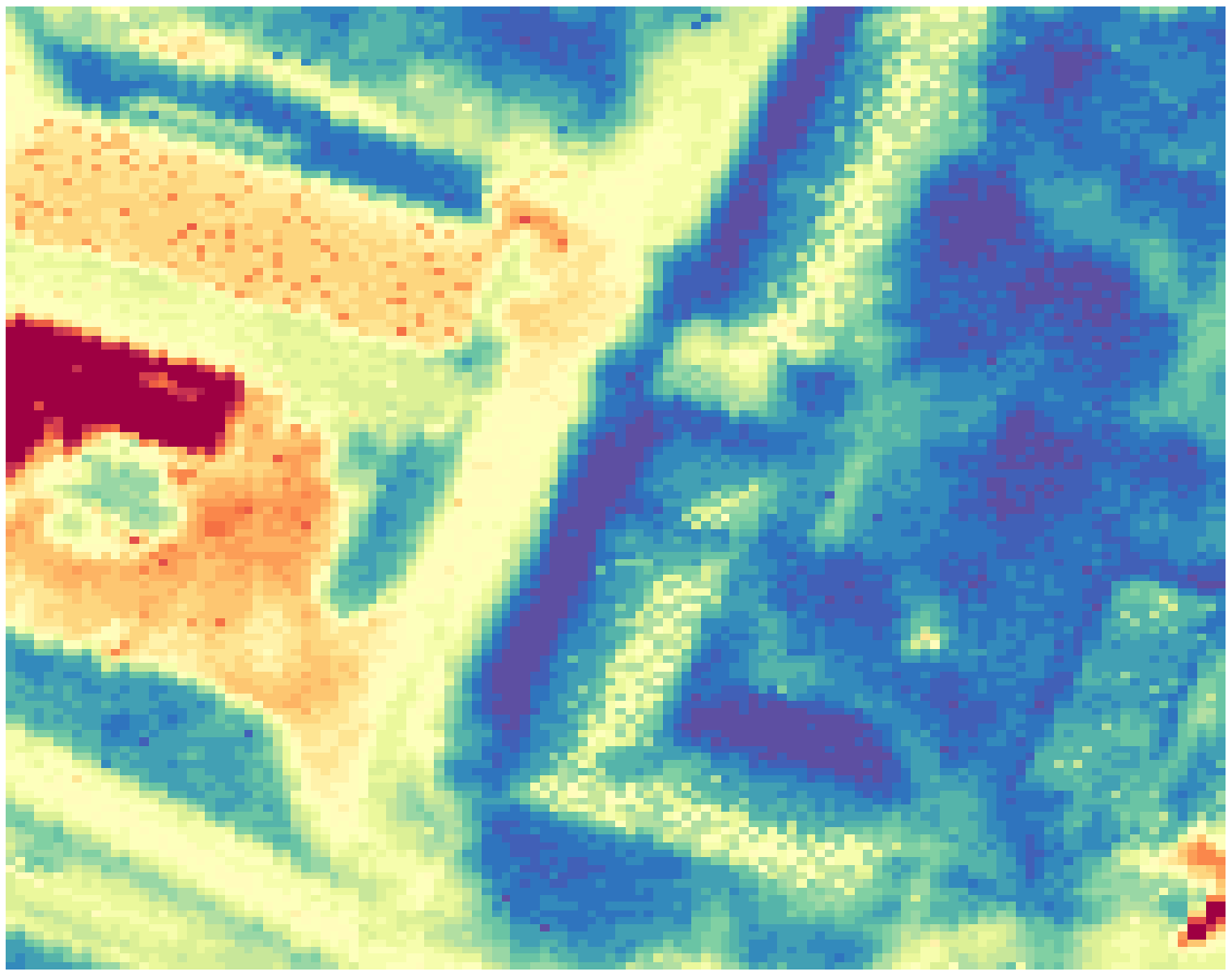}
		\includegraphics[width=1\textwidth]{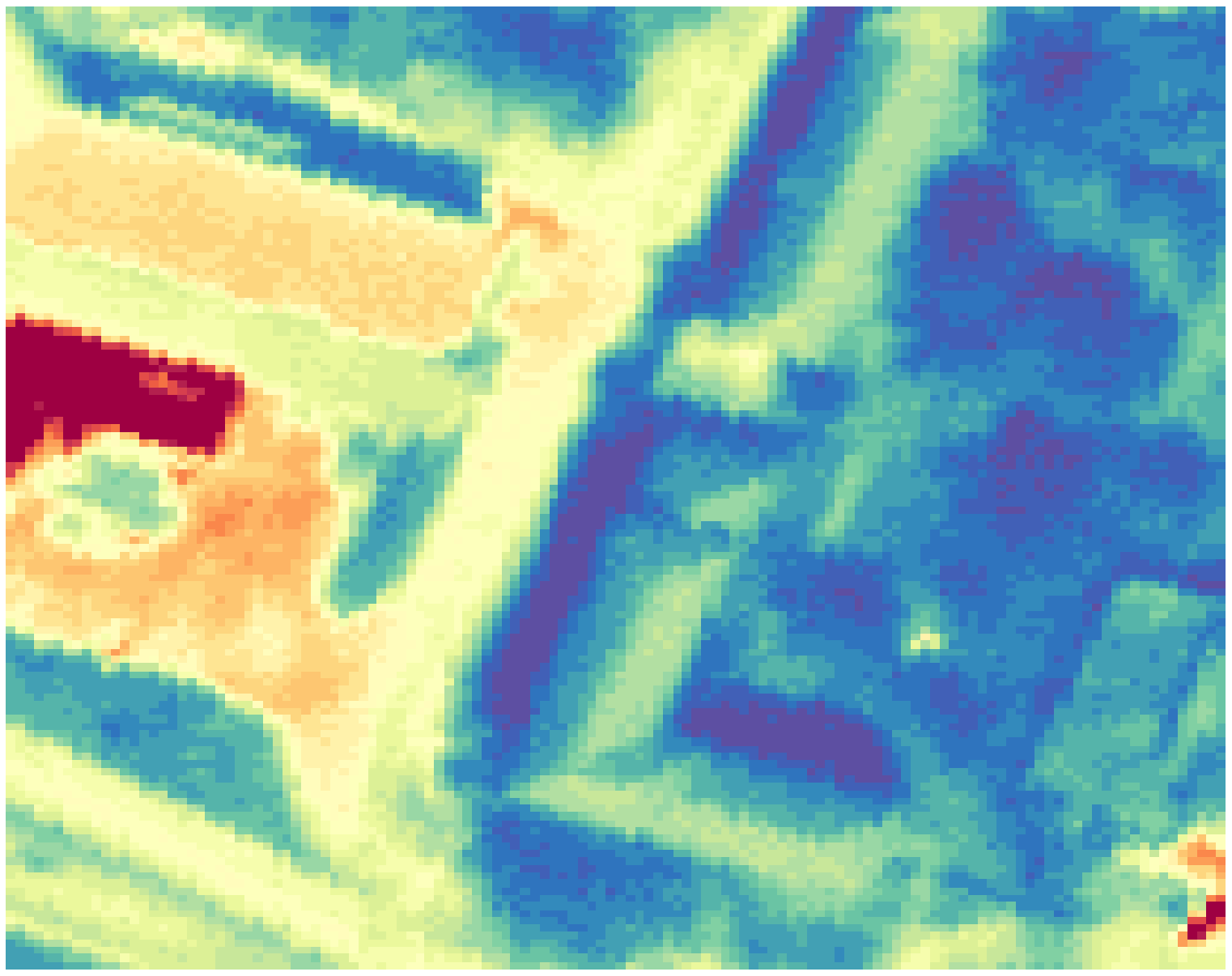}
		\subcaption{Detail 2 ($694~\si{\nano\meter}$)\label{fig:hsi_detail2}}
	\end{subfigure}
        	\caption{Recovery by NIHT from FJLT measurements with $\delta = 0.33$ using low-rank model ($\rho_r = 0.25,\, \rho_s = 0$) compared to the low-rank plus sparse model ($\rho_r = 0.25,\, \rho_s = 0.05$). Figure~\ref{fig:hsi_truth} - \ref{fig:hsi_niht} show the color renderings of the original multispectral image and the two recovered images. Figure~\ref{fig:hsi_mc_psnr} and Figure~\ref{fig:hsi_niht_psnr} show the spatial PSNR of the recovery from the low-rank only model (overall PSNR of $38.9~\si{\deci\bel}$) and the low-rank plus sparse model (overall PSNR of $39.1~\si{\deci\bel}$) respectively. Figure~\ref{fig:hsi_detail1} and Figure~\ref{fig:hsi_detail2} show two details of size $128\times128$ in the $694~\si{\nano\meter}$ band.\label{fig:hsi}}
\end{figure}

We employ NIHT on a $512 \times 512 \times 48$ airborne hyperspectral image from the GRSS 2018 Data Fusion contest~\cite{Xu2019advanced} that is rearranged into a matrix of size $262\,144 \times 48$ and subsampled using FJLT with $\delta =0.33$.  Figure~\ref{fig:hsi} demonstrates recovery by NIHT using rank $r = 3$ and sparsity $s = 150\,995$ ($\rho_r = 0.25,\,\rho_s = 0.05$) in comparison with the the low-rank model with rank $r = 3$ and $s = 0$ ($\rho_r = 0.25,\,\rho_s = 0$). Both methods recover the image well but the low-rank plus sparse recovery achieves slightly higher PSNR of $39.1~\si{\deci\bel}$ compared to the low-rank recovery that has PSNR of $38.9~\si{\deci\bel}$ and slightly better fine details. Figure~\ref{fig:hsi_mc_psnr} and Figure~\ref{fig:hsi_niht_psnr} depict the localization of the error in terms of PSNR and shows that adding the sparse component improves PSNR of a few localized parts. Although the overall gain in the  PSNR is small compared to the low-rank model, the differences in the localized regions of the image can be potentially impactful when further analyzed in practical applications such as semantic segmentation~\cite{Kemker2018algorithms}.

\section{Conclusion}
The main theorems, Theorems \ref{thm:rip_ls}, \ref{thm:thmexistunique}, \ref{thm:convex_recovery}, \ref{thm:niht_convergence}, and \ref{thm:naht_convergence}, are the natural extension of analogous results in the compressed sensing and matrix completion literature to the space of low-rank plus sparse matrices, Definition~\ref{def:ls}, see \cite{Eldar2012compressed,Foucart2013a} and references therein.  They establish the foundational theory and provide examples of algorithms for recovery of matrices that can be expressed as a sum of a low-rank and a sparse matrix from under sampled measurements.   While these results could be anticipated, with \cite{Waters2011sparcs} being an early non-convex algorithm for this setting, these advancements had not yet been proven.
We prove that the restricted isometry constants of random linear operators obeying concentration of measure inequalities, such as Gaussian measurements or the Fast Johnson-Lindenstrauss Transform, can be upper bounded when the number of measurements are of the order depending on the degrees of freedom of the low-rank plus sparse matrix.  Making use of these RICs, we show that low-rank plus sparse matrices can be provably recovered by computationally efficient methods, e.g. by solving semidefinite programming or by two gradient descent algorithms, when the restricted isometry constants of the measurement operator are sufficiently bounded. \Rev{These results also provably solve Robust PCA with the asymptotically optimal number of corruptions and improve the previously known guarantees by not requiring an assumption on the support of the sparse matrix.} Numerical experiments on synthetic data empirically demonstrate phase transitions in the parameter space for which the recovery is possible.  Experiments for dynamic-foreground/static-background video separation show that the segmentation of moving objects can be obtains with similar error from only one third as many measurement as compared to the entire video sequence.  The contributions here open up the possibility of other algorithms in compressed sensing and low-rank matrix completion/sensing to be extended to the case of low-rank plus sparse matrix recovery, e.g. more efficient algorithms such as those employing momentum \cite{Kyrillidis2014matrix,Wei2015fast} or minimising over increasingly larger subspaces \cite{Blanchard2015cgiht}.  These results also illustrate how RICs can be developed for more complex additive data models, \Rev{provided it is possible to control the correlation between them,} and one can expect that similar results would be possible for new data models.

\section*{Acknowledgement}
We would like to thank Robert A.\ Lamb and David Humphreys for useful discussions around the applications of low-rank plus sparse model to multispectral imaging.
\bibliography{library.bib}

\newpage
\appendix
\section{Phase transitions for synthetic problem of size $m=n=30$}\label{app:convex}
\vspace{-.5em}
Figure~\ref{fig:phase_cvx} depicts the  phase transitions of
$\delta$ above which NIHT, NAHT and solving the convex relaxation problem in \eqref{eq:convex} successfully recovers $X_0$ in more
than half of the experiments. Comparing Fig.~\ref{fig:phase_cvx} to
Fig.~\ref{fig:phase_nonconvex} we see that the phase transitions
roughly occur for the same parameters $\rho_r, \rho_s$ with only small
differences due to the finite dimensional effects of the smaller
problem size being more pronounced when $m = n = 30$. We also observe
that non-convex algorithms perform better than the convex relaxation
in that they are able to recover higher ranks and sparsities from
fewer samples in addition to also taking less time to converge. 
\begin{figure}[h!]
        \centering
         \begin{subfigure}[b]{\figwidthH\textwidth}
 		\includegraphics[width=.9\textwidth,height=.195\textheight]{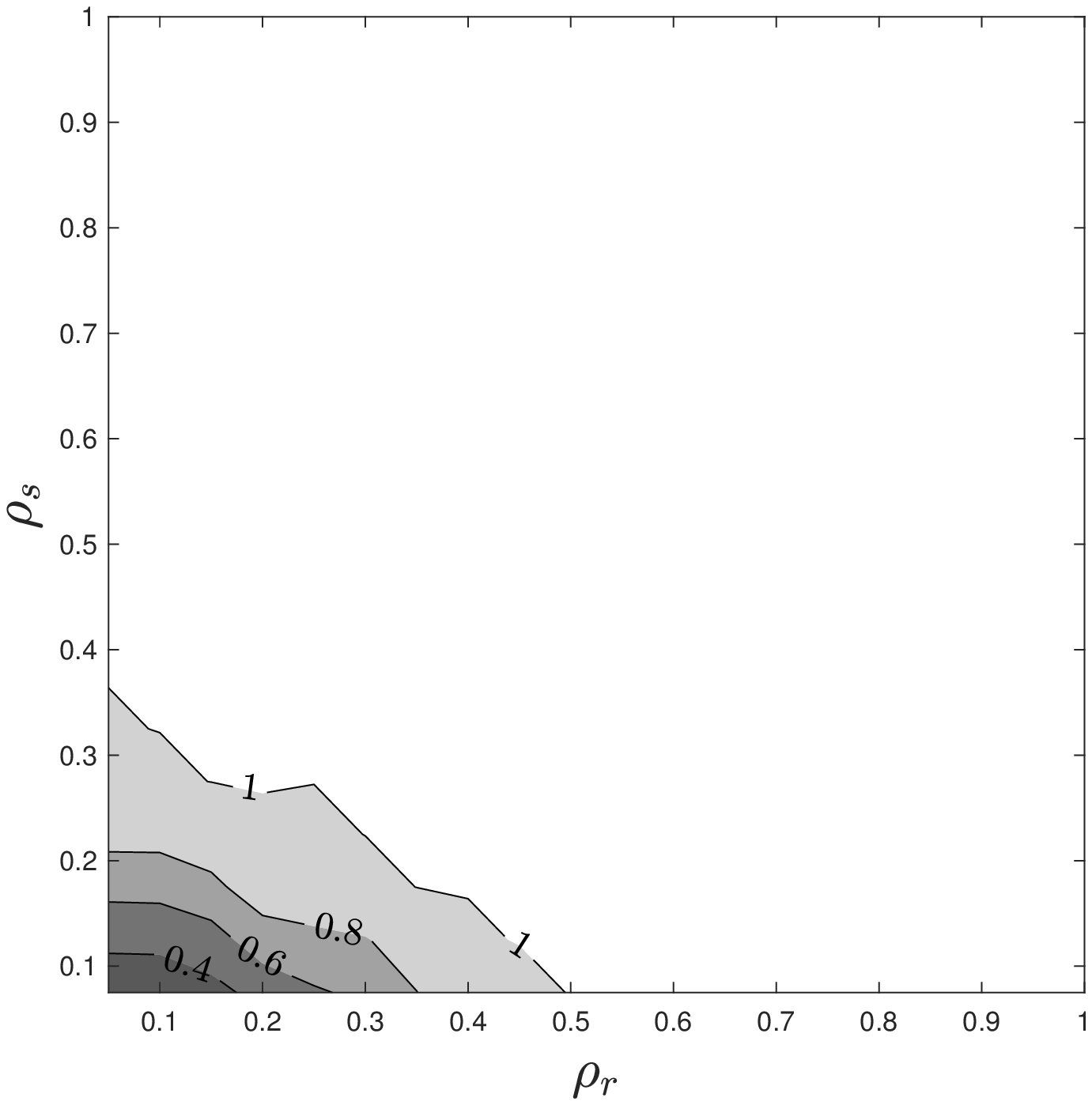}
		\caption{Convex relaxation (Gaussian measurements)}
	\end{subfigure}
	\hspace{0.03\textwidth}
         \begin{subfigure}[b]{\figwidthH\textwidth}
	 	\includegraphics[width=.9\textwidth,height=.195\textheight]{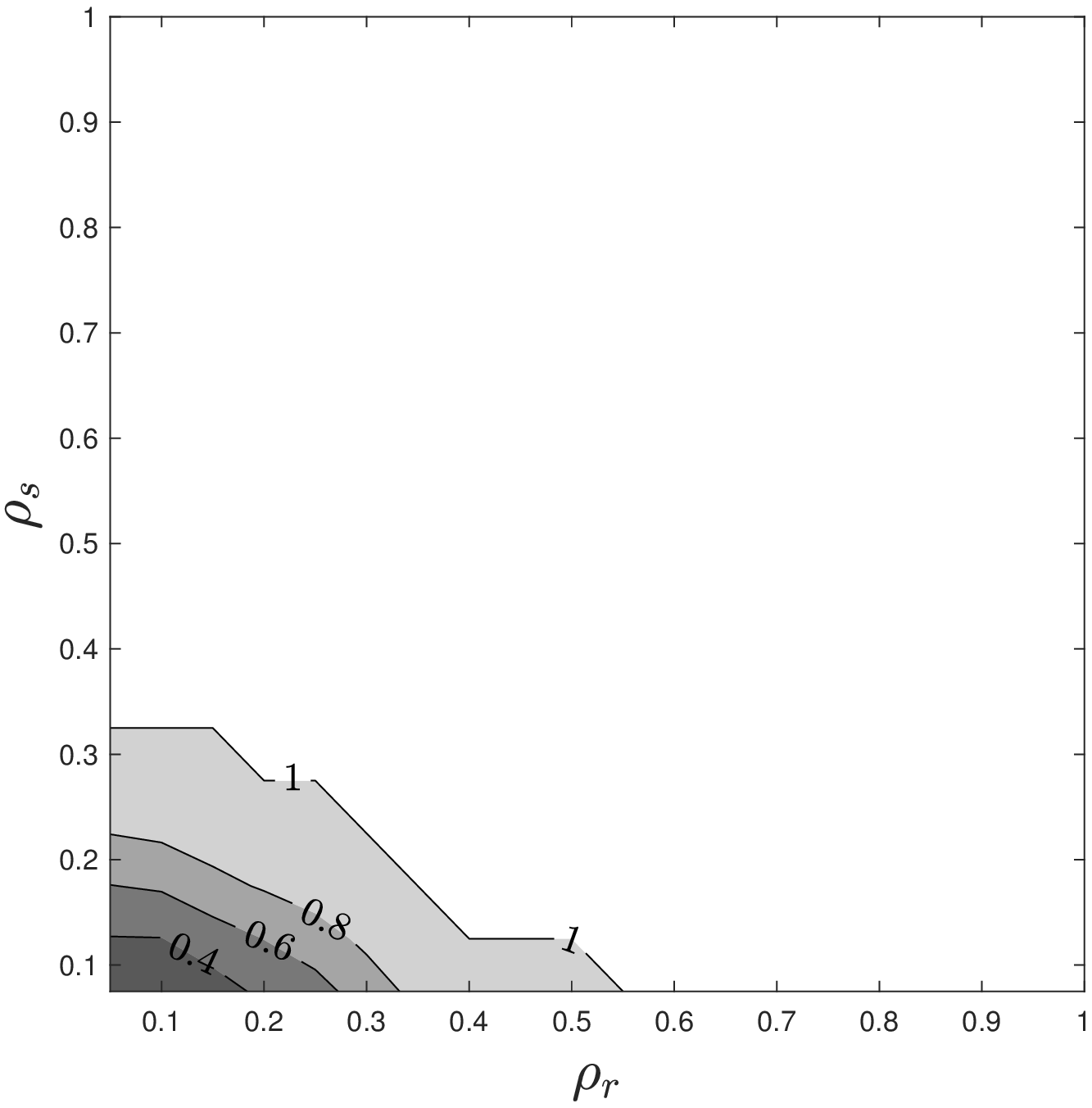}
		\caption{Convex relaxation (FJLT measurements)}
	\end{subfigure}
	\begin{subfigure}[b]{\figwidthH\textwidth}
 		\includegraphics[width=.9\textwidth,height=.195\textheight]{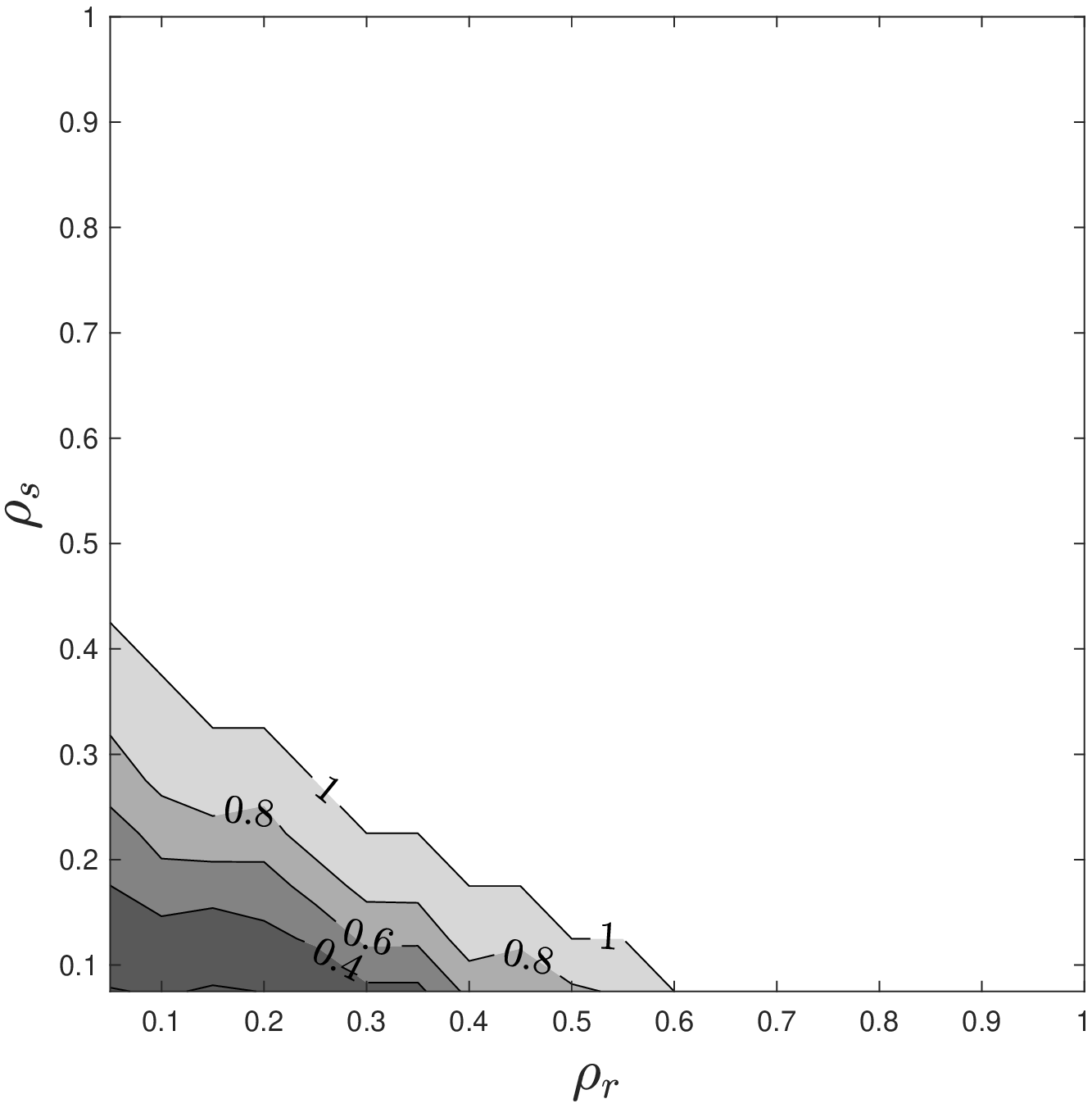}
		\caption{NIHT (Gaussian measurements)}
	\end{subfigure}
	\hspace{0.03\textwidth}
         \begin{subfigure}[b]{\figwidthH\textwidth}
	 	\includegraphics[width=.9\textwidth,height=.195\textheight]{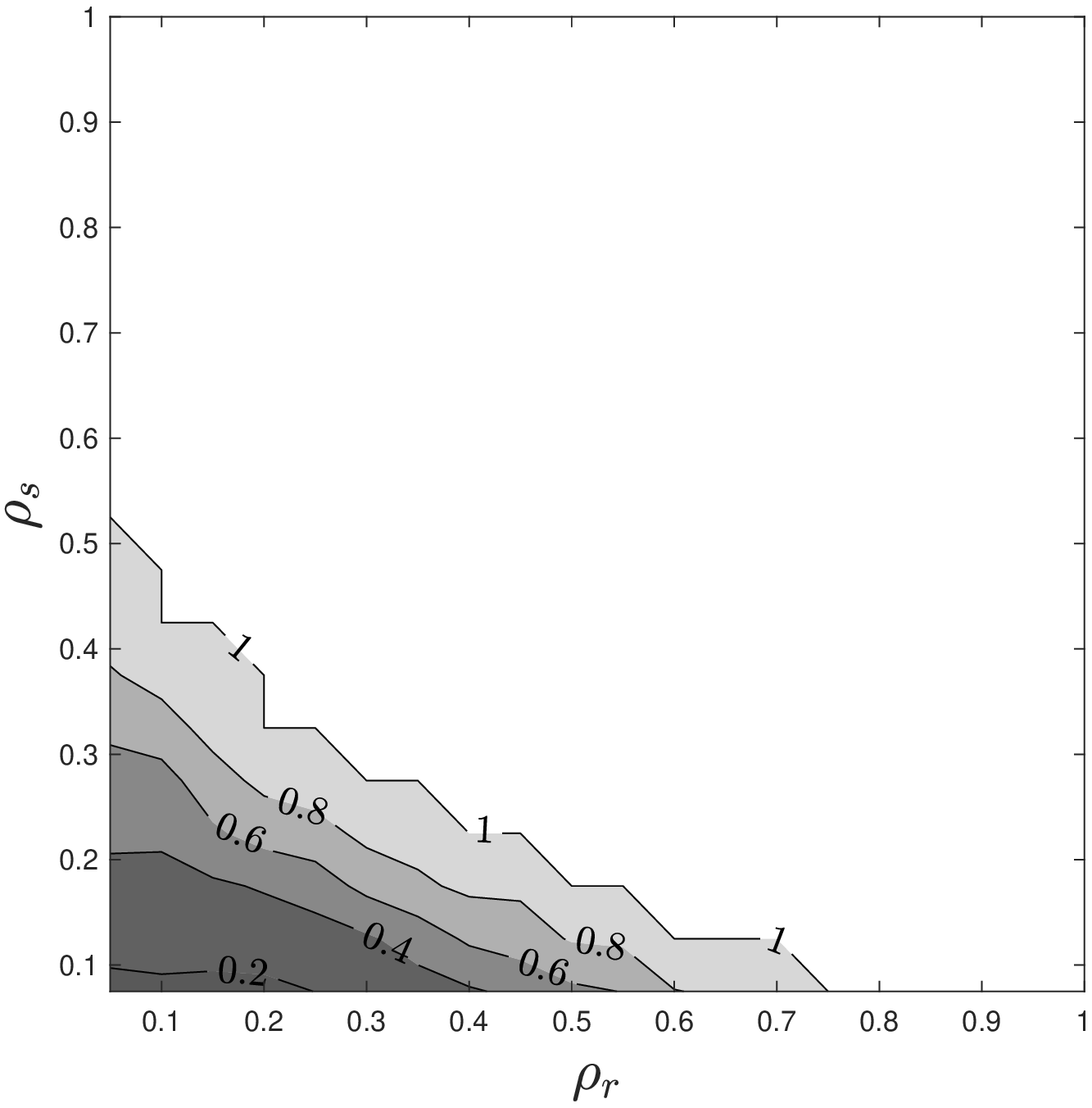}
		\caption{NIHT (FJLT measurements)}
	\end{subfigure}
	\begin{subfigure}[b]{\figwidthH\textwidth}
 		\includegraphics[width=.9\textwidth,height=.195\textheight]{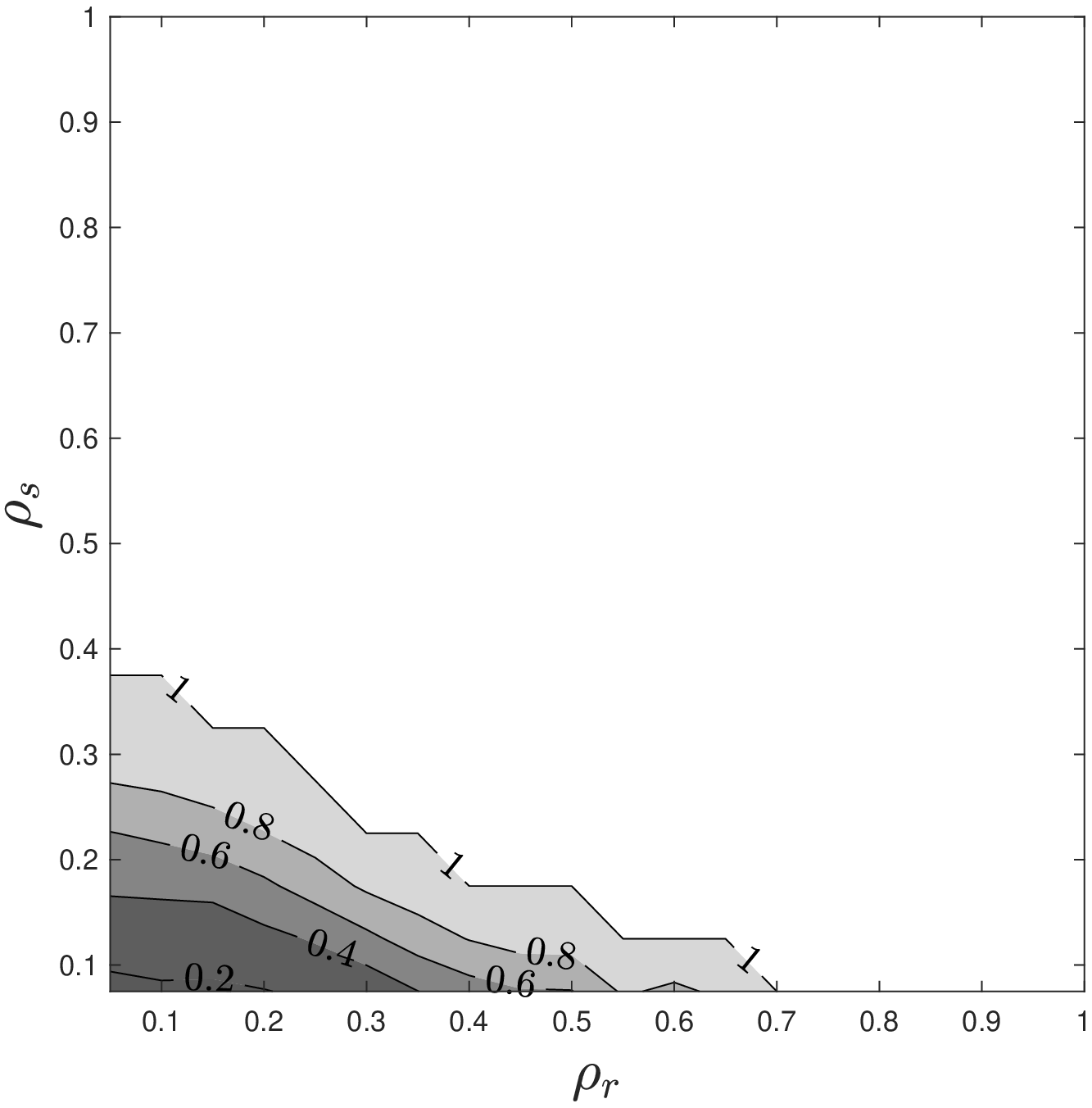}
		\caption{NAHT (Gaussian measurements)}
	\end{subfigure}
	\hspace{0.03\textwidth}
         \begin{subfigure}[b]{\figwidthH\textwidth}
	 	\includegraphics[width=.9\textwidth,height=.195\textheight]{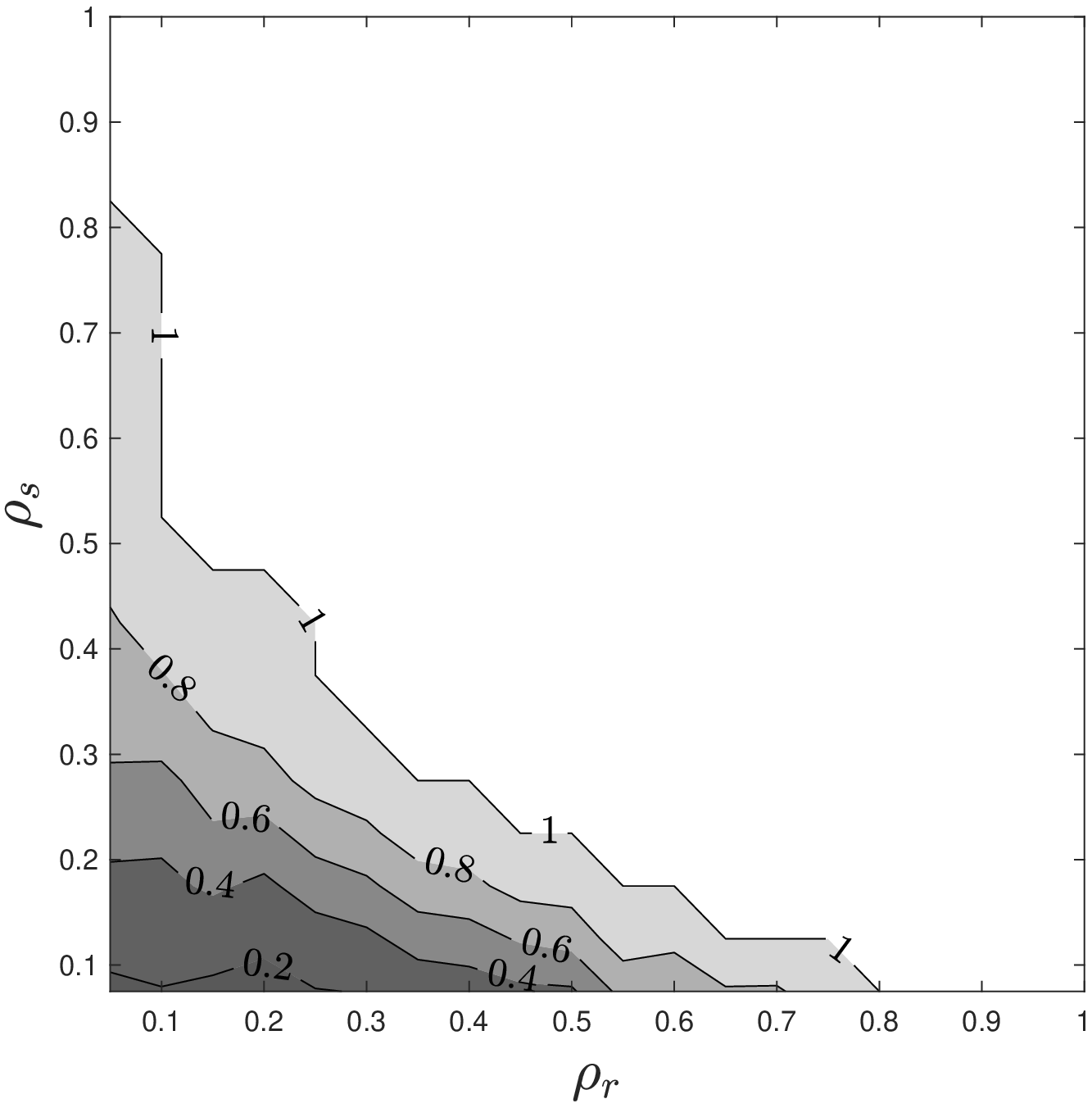}
		\caption{NAHT (FJLT measurements)}
	\end{subfigure}
	\vspace{-0.5em}
    \caption{Phase transition level curves denoting the
                  value of $\delta^*$ for which values of $\rho_r$ and
$\rho_s$ {\em below which} are recovered for at least $5$ out of $10$
 experiments for $\delta$, $\rho_r$, and $\rho_s$ as given by
\eqref{eq:delta_rho}. The convex optimization problem is solved by SDPT3 \cite{Toh1999sdpt3}. NIHT and NAHT are observed to recover matrices of higher ranks and sparsities compared to solving the convex relaxation.\label{fig:phase_cvx}}
\end{figure}


\section{Supporting lemmata \label{sec:appendix_lemma}}

\Rev{The following lemma reveals the usefulness of incoherence in controlling the correlation between \emph{incoherent low-rank} and \emph{sparse} matrices.}

\begin{restatable}[\Rev{Subadditivity of the $\LS_{m,n}(r,s,\mu)$ set}]{lemma}{lemmalsmuadditive}
	\label{lemma:ls_mu_additive}
    The sum of two incoherent low-rank plus sparse matrices $X_1, X_2 \in \LS_{m,n}(r,s,\mu)$ is also an incoherent low-rank plus sparse matrix $X_1 + X_2 \in \LS_{m,n}(2r,2s,\mu)$, and consequently
    \begin{equation*}
        \LS_{m,n}(r,s,\mu) + \LS_{m,n}(r,s,\mu) = \LS_{m,n}(2r,2s,\mu),
    \end{equation*}
	where the plus sign denotes the Minkowski sum of two sets.
\end{restatable}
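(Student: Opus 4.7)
The plan is to verify the three defining conditions of $\LS_{m,n}(2r, 2s, \mu)$ for the sum $X_1 + X_2 = (L_1 + L_2) + (S_1 + S_2)$: a rank bound on $L_1+L_2$, a sparsity bound on $S_1+S_2$, and the incoherence bound on the combined low-rank component. The first two are essentially immediate---$\rank(L_1 + L_2) \le \rank(L_1) + \rank(L_2) \le 2r$ by subadditivity of rank, and $\|S_1 + S_2\|_0 \le |\supp(S_1) \cup \supp(S_2)| \le \|S_1\|_0 + \|S_2\|_0 \le 2s$---so the essential content is the incoherence of $L_1+L_2$.

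For the incoherence, the natural starting point is the concatenated (non-orthonormal) factorization
\begin{equation*}
L_1 + L_2 \;=\; \begin{bmatrix} U_1 & U_2 \end{bmatrix} \begin{pmatrix} \Sigma_1 & 0 \\ 0 & \Sigma_2 \end{pmatrix} \begin{bmatrix} V_1 & V_2 \end{bmatrix}^T,
\end{equation*}
where $(U_i, \Sigma_i, V_i)$ are the SVD factors of $L_i$ furnished by $L_i \in \LS_{m,n}(r,s,\mu)$. For the concatenated factor $M := [U_1, U_2] \in \mathbb{R}^{m\times 2r}$, row-norms are exactly additive,
\begin{equation*}
\|M^T e_i\|_2^2 \;=\; \|U_1^T e_i\|_2^2 + \|U_2^T e_i\|_2^2 \;\le\; 2\mu r/m \;=\; \mu (2r)/m,
\end{equation*}
which is exactly the incoherence target on the left, and symmetrically for $[V_1, V_2]$ on the right.

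The main obstacle is converting this row-norm bound on the non-orthonormal factor $M$ into the same bound on the \emph{orthonormal} left singular basis $\tilde U$ of $L_1+L_2$ required by Definition~\ref{def:ls}. Since $\Col{L_1+L_2} \subseteq \Col{U_1}+\Col{U_2} = \Col{M}$, this reduces to bounding $\|P_{\Col{M}} e_i\|_2^2$, for which the natural approach is the inequality $\|P_{\Col{U_1}+\Col{U_2}} e_i\|_2^2 \le \|P_{U_1} e_i\|_2^2 + \|P_{U_2} e_i\|_2^2$. I would attempt this via the orthogonal decomposition $P_{\Col{U_1}+\Col{U_2}} = P_{\Col{U_1}} + P_W$ with $W = (\Col{U_1}+\Col{U_2}) \cap \Col{U_1}^\perp$ of dimension at most $r$, and control $\|P_W e_i\|_2^2$ using the incoherence carried by $U_2$ (noting that $W$ is the $U_1$-perpendicular projection of $\Col{U_2}$ rather than $\Col{U_2}$ itself, so some care is needed to pass the bound through). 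This conversion step from the non-orthonormal factor to the SVD basis is where I expect the bulk of the technical effort to lie.

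Once the incoherence of the combined low-rank factor is established, $X_1 + X_2 \in \LS_{m,n}(2r, 2s, \mu)$ is direct. The reverse inclusion giving the Minkowski-sum equality is then obtained by splitting a rank-$2r$ matrix via its truncated SVD (top $r$ versus bottom $r$ singular values) and partitioning the sparse support into two halves of size at most $s$; the incoherence of each rank-$r$ half inherits from the rank-$2r$ factor.
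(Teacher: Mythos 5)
Your route is the same as the paper's: rank and sparsity subadditivity are immediate, and the real content is the incoherence of $L_1+L_2$, which both you and the paper attack through the concatenated factor $M=[U_1\,U_2]$ and the exact additivity $\|M^Te_i\|_2^2=\|U_1^Te_i\|_2^2+\|U_2^Te_i\|_2^2\le \mu(2r)/m$. The step you flag as the bulk of the technical effort --- converting this bound on the non-orthonormal $M$ into the same bound on the orthonormal singular basis of $L_1+L_2$ --- is indeed the crux, and it is a genuine gap in your proposal: the inequality you reach for, $\|P_{\Col{U_1}+\Col{U_2}}e_i\|_2^2\le\|P_{U_1}e_i\|_2^2+\|P_{U_2}e_i\|_2^2$, is false in general, and the repair via $W=(\Col{U_1}+\Col{U_2})\cap\Col{U_1}^{\perp}$ cannot work because $W$ inherits no incoherence from $U_2$. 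Concretely, take $r=1$, $u_1=m^{-1/2}(1,1,\dots,1)^T$ and $u_2=m^{-1/2}(-1,1,\dots,1)^T$, both $\mu=1$-incoherent; then $u_1-u_2\propto e_1$, so $\Col{u_1}+\Col{u_2}$ contains $e_1$ and $\|P_{\Col{u_1}+\Col{u_2}}e_1\|_2^2=1$ while $\|P_{u_1}e_1\|_2^2+\|P_{u_2}e_1\|_2^2=2/m$. Worse, with $L_1=u_1v^T$ and $L_2=-u_2v^T$ for an incoherent $v$, the sum $L_1+L_2=(2/\sqrt{m})\,e_1v^T$ has left singular vector $e_1$ and is maximally coherent, so no amount of care at this step can succeed: the incoherence claim itself fails for such pairs unless one either enlarges $\mu$ for the sum or imposes a lower bound on the principal angles between $\Col{L_1}$ and $\Col{L_2}$.

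You should know that the paper's own proof disposes of this step with the bare assertion $\|U^Te_i\|_2^2\le\|[U_1\,U_2]^Te_i\|_2^2$, justified only by $\Col{U}\subseteq\Col{[U_1\,U_2]}$; that is precisely the operator inequality $P_{\Col{U_1}+\Col{U_2}}\preceq P_{U_1}+P_{U_2}$ evaluated at $e_i$, and it is defeated by the same example (on $\Col{U_1}+\Col{U_2}$ the operator $P_{U_1}+P_{U_2}$ has an eigenvalue $1-|\cos\theta|<1$ when the two lines meet at angle $\theta$, whereas the projection onto the sum has eigenvalue $1$ there). So your instinct that this conversion is delicate is more accurate than the paper's treatment, but neither your sketch nor the paper's argument closes it. Separately, your reverse inclusion via the truncated SVD and a split of the support only yields halves whose rank-$r$ factors satisfy $\|U_{:,1:r}^Te_i\|_2^2\le 2\mu r/m$, i.e.\ incoherence $2\mu$ rather than $\mu$ at rank $r$, so the Minkowski-sum equality does not follow from that sketch either (the paper does not argue this direction at all).
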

\begin{proof}\label{proof:ls_mu_additive}
	Let $X_1, X_2 \in \LS_{m,n}(r,s,\mu)$ with $X_1 = L_1 + S_1$, $X_2 = L_2 + S_2$, and $U_1, U_2$ and $V_1, V_2$ being the left and the right singular vectors of $L_1$ and $L_2$ respectively. 

	Construct the sum $X = L + S$, where $L = L_1 + L_2$, $S = S_1 + S_2$, and $U, V$ are the left and right singular vectors of the newly constructed $L$. Since the column space of $U$ is a subspace of the column space of the concatenated matrix $\left[U_1 \, U_2 \right]$ we have that the projection on $U$ must have a smaller or equal norm than the projection on $\left[U_1 \, U_2 \right]$
	\begin{align*}
		\left\| U^T e_i \right\|^2_2 &\leq \left\| \left[ U_1 \, U_2 \right]^T e_i \right\|^2_2 \\
		& = e_i^T \left[U_1 \, U_2 \right] \left[U_1 \, U_2 \right]^T e_i \\
		& = \left\| U_1^T e_i \right\|_2^2 + \left\| U_2^T e_i \right\|_2^2 \leq 2\frac{\mu r}{m},
	\end{align*}
	where in the third line we use the definition of incoherence. Since the rank of the matrix doubled, the inequality yields the desired result $\| U^T e_i\| \leq \sqrt{\frac{\mu 2r}{m}}$. The argument can be followed \emph{mutatis mutandis} for the upper bound on the right singular vectors $V$.
\end{proof}

\prooftitle{Lemma~\ref{lemma:LS_mu_closed}}{$\LS_{m,n}(r,s,\mu)$ is a closed set}{\pageref{lemma:LS_mu_closed}}
\begin{proof}\label{lemma:LS_mu_closed_proof}
	The first part (1) of the statement is proved as part of Lemma~\ref{lemma:notclosed-mucorrelation}. 

	To prove the second part (2) of the statement, let $X = L + S \in \LS_{m,n}(r,s,\mu)$ and denote $\rcc = \mu \frac{r \sqrt{s}}{\sqrt{mn}}$ for which we have $\rcc <1$ by $\mu < \sqrt{mn}/(r\sqrt{s})$. By conicity of $\LS_{m,n}(r,s,\mu)$ we can assume without loss of generality $\|X\|_F = 1$. The bound on the correlation in (1) states
	\begin{equation*}
		\rcc \geq \frac{\left| \inp{L}{S} \right| }{\|L\|_F \|S\|_F},
	\end{equation*}
	which combined with the rearranged terms of the identity $\|X\|_F^2 = 1 = \|L\|^2_F + \|S\|^2_F + 2\inp{L}{S}$ yields
	\begin{align}
		\gamma \geq \frac{\left| \inp{L}{S} \right| }{\|L\|_F \|S\|_F} &= \frac{1}{2} \left| \frac{1}{\|L\|_F \|S\|_F} - \frac{\|S\|_F}{\|L\|_F} - \frac{\|L\|_F}{\|S\|_F} \right|. \label{eq:ls-equiv2}
	\end{align}

	The proof follows by showing that the inequality in \eqref{eq:ls-equiv2} implies an upper bound on $\|L\|_F$ and $\|S\|_F$. For ease of notation, we denote $x := \|L\|_F$ and $y := \|S\|_F$, and multiply the inequality in \eqref{eq:ls-equiv2} by $\left\|L\right\|_F \left\|S\right\|_F$ 
	\begin{equation}
		2 \rcc x y \geq | 1 - x^2 - y^2 | . \label{eq:ls-equiv3}
	\end{equation}
	where we used that $\left\|L\right\|_F, \left\|S\right\|_F$ are strictly positive.

	The case of $1 - x^2 - y^2 \geq 0$ implies that $x \leq 1$ and $y \leq 1$, and thus concludes the proof. The other case of $1 - x^2 - y^2 \leq 0$ in \eqref{eq:ls-equiv3} is equivalent~to
	\begin{equation*}
		2 \rcc x y \geq x^2 + y^2 -1,
	\end{equation*}
	which has two roots $y = -cx \pm \sqrt{c^2 x^2 - x^2 + 1}$. Since $x,y$ denote the Frobenius norm of $L$ and $S$ respectively, we seek only the real roots, for which to exist we need $c^2 x^2 - x^2 + 1 \geq 0$, and because $c<1$, we can rearrange the terms as
	\begin{equation*}
		x \leq \frac{1}{\sqrt{1- \gamma^2}},
	\end{equation*}
	which is equivalent to
	\begin{align*}
		\left\|L\right\|_F &\leq \frac{1}{\sqrt{1-\gamma^2}} = \left(1 - \mu^2\frac{r^2 s}{mn} \right)^{-1/2},
	\end{align*}
	prooving the second statement (2) in Lemma~\ref{lemma:LS_mu_closed}. Applying the same arguments to $\| S \|_F$ yields the bound on the Frobenius norm of the sparse component.
	
	Finally, to prove the third part of the statement (3), consider a sequence $X_i = L_i + S_i \in \LS_{m,n}(r,s, \mu)$ that converges to a matrix $X\in\bR^{m\times n}$ as $i \rightarrow \infty$. Since, also $\| X_i\|_F \rightarrow \|X\|_F$, we have that for any $\ep > 0$, there exists $i_0 \in \mathbb{N}$ such that
	\begin{equation*}
		\forall i > i_0: \qquad \left\|X\right\|_F - \ep \leq \left\| X_i \right\|_F \leq \left\| X \right\|_F + \ep,
	\end{equation*} 
	which, combined with $\left\|L_i\right\|_F \leq \tau \left\| X_i \right\|_F$, implies that for all $i \geq i_0$ we have $\left\| L_i \right\|_F \leq \tau \left\| X \right\|_F + \tau\ep$ where $\tau:=(1-\mu\frac{r^2s}{mn})^{-1/2}$ by the second part of the statement (2). 
	
	Denote the closed set of rank-$r$ matrices whose Frobenius norm is bounded by $\gamma > 0$ as 
	\begin{equation*}
		\mathcal{L}_{m,n}\left(r, \gamma \right) = \left\{Y \in \bR^{m\times n}:\quad \rank{(Y)}\leq r, \quad \|Y\|_F \leq \gamma \right\},
	\end{equation*}
    which is also compact by being closed and bounded.

	We have that $L_i\in\mathcal{L}_{m,n}\left(r,\left\| X \right\|_F + \tau\ep\right)$ for all $i \geq i_0$. Since the set is compact and closed, we can assume, by passing to a subsequence, that $L_i \xrightarrow{i \rightarrow \infty} L \in \mathcal{L}_{m,n}\left(r, \tau \left\|X\right\|_F + \tau \ep \right)$ as $i \geq i_0$. 
	
	Additionally, since $\tau>0$ is fixed, the upper bound of the Frobenius norm of the low-rank component $\left\|L_i\right\|_F \leq \tau \left\|X_i\right\|_F$ must also hold in the limit $\left\| L \right\|_F \leq \tau \left\|X \right\|_F$.

	By the set of $s$-sparse matrices being closed, we have that the limit point
	\begin{equation*}
		S_i = X_i - L_i \rightarrow X - L,
	\end{equation*}
	is also an $s$-sparse matrix, thus 
	\begin{equation*}
		X = L + (X-L) \in \LS_{m,n}(r,s,\mu),
	\end{equation*}
	proving that $\LS_{m,n}(r,s, \mu)$ is closed.
\end{proof}

\begin{lemma}[\Rev{The rank-sparsity correlation bound}]
	\label{lemma:notclosed-mucorrelation}
	Let $L,S\in\bR^{m\times n}$ and $L = U \Sigma V^T$ be the singular value decomposition of $L$, then
	\begin{equation}
		\left| \inp{L}{S} \right| \leq \left\| \abs\left( U \right) \abs\left( V^T\right) \right\|_\infty \, \sigma_\mathrm{max}\left( L \right) \left\| S \right\|_1, \label{eq:rank-sparsity-correlation1}
   \end{equation}
   where $\abs(\cdot)$ denotes the entry-wise absolute value of a matrix, the matrix norms are vectorised entry-wise $\ell_p$-norms, and $\sigma_\mathrm{max}\left( L \right)$ is the largest singular value of $L$. As a consequence, if $L$ is a rank-$r$ matrix that is $\mu$-incoherent and $S$ is an $s$-sparse~matrix
	\begin{equation}
		\left| \inp{L}{S} \right| \leq \mu \frac{r \sqrt{s}}{\sqrt{mn}} \left\| L \right\|_F \left\| S \right\|_F. \label{eq:notclosed-mucorrelation}
	\end{equation}
\end{lemma}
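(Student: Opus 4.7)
The plan is to prove the two inequalities in sequence, with the first inequality serving as a stepping stone to the second. For the first bound, I would expand the inner product entry-wise and exploit the singular value decomposition of $L$ to separate the singular values from the singular vectors. Specifically, writing $L = U\Sigma V^T$, each entry satisfies
\begin{equation*}
    |L_{ij}| = \left|\sum_{k=1}^{r} \sigma_k U_{ik} V_{jk}\right| \leq \sigma_{\max}(L)\sum_{k=1}^{r}|U_{ik}||V_{jk}| = \sigma_{\max}(L)\,\bigl(\abs(U)\abs(V^T)\bigr)_{ij}.
\end{equation*}
Then the triangle inequality applied to $\inp{L}{S} = \sum_{i,j} L_{ij} S_{ij}$ gives
\begin{equation*}
    |\inp{L}{S}| \leq \sigma_{\max}(L) \sum_{i,j} \bigl(\abs(U)\abs(V^T)\bigr)_{ij} |S_{ij}| \leq \sigma_{\max}(L)\,\|\abs(U)\abs(V^T)\|_\infty \|S\|_1,
\end{equation*}
using that the entry-wise $\ell_\infty$-norm factors out of the sum against the entry-wise $\ell_1$-norm of $S$. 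This proves the first inequality.

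For the second inequality, I would bound each of the three factors on the right-hand side of the first inequality using the hypotheses. The incoherence assumption directly controls $\|\abs(U)\abs(V^T)\|_\infty$: for any $i,j$, by Cauchy--Schwarz,
\begin{equation*}
    \bigl(\abs(U)\abs(V^T)\bigr)_{ij} = \sum_{k=1}^{r} |U_{ik}||V_{jk}| \leq \|U^T e_i\|_2 \,\|V^T f_j\|_2 \leq \sqrt{\tfrac{\mu r}{m}}\sqrt{\tfrac{\mu r}{n}} = \frac{\mu r}{\sqrt{mn}}.
\end{equation*}
The singular value is bounded by the Frobenius norm, $\sigma_{\max}(L)\leq \|L\|_F$, and $s$-sparsity of $S$ together with Cauchy--Schwarz on the support yields $\|S\|_1 \leq \sqrt{s}\,\|S\|_F$. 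Multiplying these three bounds gives the stated $\mu \tfrac{r\sqrt{s}}{\sqrt{mn}}\|L\|_F\|S\|_F$.

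There is no substantive obstacle: both inequalities are elementary once the right decomposition is written down. The only mildly delicate step is recognising that the correct factorisation of $|L_{ij}|$ is through $\abs(U)\abs(V^T)$ with $\sigma_{\max}(L)$ pulled out; bounding by $\sigma_{\max}$ rather than by $\|\Sigma\|_F$ is what makes the incoherence yield the clean $\mu r/\sqrt{mn}$ factor after Cauchy--Schwarz. Everything else is a direct application of standard norm inequalities.
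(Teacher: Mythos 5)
Your proof is correct and follows essentially the same route as the paper's: expand $L_{ij}$ via the SVD, pull out $\sigma_{\max}(L)$ to reduce to $\|\abs(U)\abs(V^T)\|_\infty\|S\|_1$, then apply Cauchy--Schwarz with the incoherence bound, $\sigma_{\max}(L)\leq\|L\|_F$, and $\|S\|_1\leq\sqrt{s}\,\|S\|_F$. No gaps.
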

\begin{proof}\label{proof:notclosed-mucorrelation}
	For $L, S\in\bR^{m\times n}$ and $L = U \Sigma V^T$ being the singular value decomposition of $L$, we have
	\begin{align}
		\left| \inp{L}{S} \right| &= \left| \sum_{(i,j)\in [m] \times [n]} S_{i,j} \, L_{i,j} \,\right| = \left| \sum_{(i,j)\in [m] \times [n]}  S_{i,j} \,  e_i^T U \Sigma V^T f_j \, \right| \label{eq:coherence-correlation0}\\
		&\leq \sum_{(i,j)\in [m] \times [n]} \left| S_{i,j} \right| \left| \left( U^T e_i \right)^T \Sigma \left( V^T \Rev{f_j} \right) \right| \label{eq:coherence-correlation1} \\
		& = \sum_{(i,j)\in [m] \times [n]} \left| S_{i,j} \right| \left| \sum_{k=1}^r \sigma_k\, \left( U^T e_i \right)_k \left( V^T \Rev{f_j} \right)_k \right| \label{eq:coherence-correlation2} \\
		& \leq \sum_{(i,j)\in [m] \times [n]} \left| S_{i,j} \right| \, \sum_{k=1}^r \sigma_k \, \abs \left( U^T e_i \right)_k \, \abs\left( V^T f_j \right)_k \label{eq:coherence-correlation3} \\
		& \leq \sigma_{\mathrm{max}}(L) \sum_{(i,j)\in [m] \times [n]} \left| S_{i,j} \right| \abs\left( U^T e_i \right)^T \abs\left( V^T \Rev{f_j} \right) \label{eq:coherence-correlation4} \\
		& \leq \sigma_{\mathrm{max}}(L) \, \|S\|_1 \left\| \abs\left( U \right) \abs\left( V^T\right) \right\|_\infty  \label{eq:coherence-correlation5}
	\end{align}
\Rev{where in the first line in \eqref{eq:coherence-correlation0} we denote $e_i\in\bR^m$, $f_i\in\bR^n$ to be the canonical basis vectors of $\bR^m$ and $\bR^n$}, the inequality in the second line \eqref{eq:coherence-correlation1} comes from the subadditivity of the absolute value, in the third line \eqref{eq:coherence-correlation2} we write out the inner product as a sum, in the fourth line \eqref{eq:coherence-correlation3} we use the subadditivity and multiplicativity of the abslolute value and denote $\abs(\cdot)$ as the entry-wise absolute value of a vector, the fifth line \eqref{eq:coherence-correlation4} comes from $\sigma_{\mathrm{max}}(L)$ being the largest singular value of $L$, and the final line in \eqref{eq:coherence-correlation5} comes from the entry-wise $\ell_\infty$-norm bounding the absolute value of all  entries of $\left(\abs\left( U \right) \abs\left( V^T\right)\right)$.

If the low-rank component $L$ is also $\mu$-incoherent, we further have 
\begin{align}
	\left\| \abs\left( U \right) \abs\left( V^T\right) \right\|_\infty &= \max_{(i,j)\in [m]\times [n]} \abs\left( U^T e_i \right)^T \abs\left( V^T f_j \right) \\
	&\leq \left\| U^T e_i \right\|_2 \left\| V^T f_j \right\|_2 \\
	&\leq \mu \frac{r}{\sqrt{mn}}, \label{eq:coherence-correlation6}
\end{align}
where the first upper bound comes from the Cauchy-Schwarz inequality on the entry-wise absolute values of $U^Te_i$ and $V^T f_j$, and the second upper bound comes from the definition of incoherence in \eqref{eq:coherence} Combining \eqref{eq:coherence-correlation6} with \eqref{eq:coherence-correlation5}, the fact that $\left\|S\right\|_1 \leq \sqrt{s} \left\| S \right\|_F$ for $s$-sparse matrices, and that $\sigma_{\mathrm{max}} \leq \|L\|_F$ yields the result in \eqref{eq:notclosed-mucorrelation}
\end{proof}

\prooftitle{Lemma~\ref{lemma:rip_single}}{RIC for a fixed $\LS$ subspace}{\pageref{lemma:rip_single}}\\
The proof uses similar to arguments as \cite[Lemma 5.1]{Baraniuk2008a} and \cite[Lemma 4.3]{Recht2010guaranteed} with the exception that here we consider two subsets, one for the low rank and another for the sparse component.
\begin{proof}\label{lemma:rip_single_proof}
	\Rev{By the linearity of $\cA(\cdot)$ and conicity of $\Sigma_{m,n}(V, W, T, \cohc)$ we can assume without loss of generality that $\|X\|_F = 1$}. By Lemma~\ref{lemma:LS_mu_closed} with $\| X \|_F = 1$ and $\cohc < \frac{\sqrt{mn}}{r \sqrt{s}}$, we can bound the Frobenius norm of the low-rank and the sparse component as $\| L \| _F \leq \tau $ and $\| S \| _F \leq \tau$, where $\tau := \left(1 - \mu^2 \frac{r^2 s}{mn}\right)^{-1/2}$. 
	
	There exist two finite $(\ric/8)$-coverings of the two matrix sets with bounded norms
	\begin{gather}
		 \left\{ L\in\bR^{m\times n}\,:\, \, \Col{L} \subseteq V,\, \Col{L^T} \subseteq W,\, \|L\|_F \leq \tau \right\} \label{eq:rip_single_setL} \\
		\left\{ S\in\bR^{m\times n}\,:\, \, S \subseteq T,\, \|S\|_F \leq \tau \right\}, \label{eq:rip_single_setS}
	\end{gather}
	that we denote $\Lambda^L, \Lambda^S$ and by \cite[Chapter~13]{Lorentz1996constructive} they are subsets of the two sets in \eqref{eq:rip_single_setL} and \eqref{eq:rip_single_setS}, and their covering numbers are upper bounded as
	\begin{equation}
		\left|\Lambda^L\right| \leq \left(\frac{24}{\ric} \tau \right)^{\dim V \cdot \dim W} \qquad \left|\Lambda^S\right| \leq \left(\frac{24}{\ric} \tau \right)^{\dim T}.
	\end{equation}
	Let $\Lambda := \left\{Q^{L} + Q^{S}:\, Q^L\in \Lambda^L,\, Q^S\in\Lambda^S \right\}$ be the set of sums of all possible pairs of the two coverings. The set $\Lambda$ is a $(\ric/4)$-covering of the set $\Sigma_{m,n}\left(V,W,T, \cohc\right)$ since for all $X \in \Sigma_{m,n} \left(V, W, T, \cohc\right)$ there exists a pair $Q \in\Lambda$ such that 
	\begin{align}
		\left\|X - Q\right\|_F &= \left\| L + S - \left(Q^L + Q^S\right)\right\|_F \\ 
		&\leq \left\| L - Q^{L}\right\|_F + \left\| S - Q^{S}\right\|_F \leq \frac{\ric}{8}+\frac{\ric}{8},
	\end{align}
	where in the first line we used the fact that $X$ can be expressed as $L + S$, and in the second line we applied the triangular inequality combined with the $Q^{L}, Q^{S}$ being $(\ric/8)$-coverings of the matrix sets for the low-rank component and the sparse component respectively.
	
	Applying the probability union bound on concentration of measure of $\cA$ as in \eqref{eq:conc_measure} with $\ep = \ric/2$ gives that
	\begin{equation}
		(\forall Q \in \Lambda): \left(1 -  \frac{\ric}{2}\right) \left\| Q \right\|_F \leq \left\|\cA(Q)\right\|_2 \leq \left(1 + \frac{\ric}{2} \right) \left\| Q \right\|_F, \label{eq:rip_single_eq1}
	\end{equation}
	holds with the probability at least 
	\begin{equation}
		1 - 2 \left(\frac{24}{\ric} \tau \right)^{\dim V \cdot \dim W}  \left(\frac{24}{\ric} \tau \right)^{\dim T} \exp{\left(-\frac{p}{2}\left( \frac{\ric^2}{8} - \frac{\ric^3}{24} \right) \right)}.
	\end{equation}
	
	By $\Sigma_{m,n}(V, W, T, \cohc)$ being a closed set by Lemma~\ref{lemma:LS_mu_closed}, the maximum
	\begin{equation}
		M = \max_{Y\in \Sigma_{m,n}(V, W, T, \cohc),\,\|Y\|_F = 1 } \| \cA(Y) \|_2,\label{eq:rip_single_max}
	\end{equation}
	is attained. Then there exists $Q\in\Lambda$ such that
	\begin{equation}
		\left\| \cA(X)\right\|_2 \leq \left\| \cA(X) \right\|_2 +  \left\| \cA(X-Q) \right\|_2 \leq 1 + \frac{\ric}{2} + M\frac{\ric}{4}, \label{eq:rip_single_eq2}
	\end{equation}
	where the first inequality comes from applying the triangle inequality to $X$ and $Q-X$ and in the second inequality we used \eqref{eq:rip_single_eq1} to upper bound $\|\cA(X)\|_2$ since $(X-Q) \in \Sigma_{m,n}(V,W,T, \cohc)$ by Lemma~\ref{lemma:ls_mu_additive} and the upper bound of $\| X- Q \|_F$ comes from $Q\in\Lambda$ combined with $\Lambda$ being a $(\ric/4)$-covering. Note that the inequality \eqref{eq:rip_single_eq2} holds for all $X\in\Sigma_{m,n}(V, W, T, \cohc)$ whose Frobenius norm \Rev{$\|X\|_F= 1$} and thus also for a matrix $\widehat{X}$ for which the maximum in \eqref{eq:rip_single_max} is attained. The inequality in \eqref{eq:rip_single_eq2} applied to the matrix that attains the maximum $\widehat{X}$ yields
	\begin{equation}
		M \leq 1 + \frac{\ric}{2} + M \frac{\ric}{4} \quad \implies \quad M \leq 1+\ric. \label{eq:rip_single_Mbound}
	\end{equation}
	The lower bound follows from the reverse triangle inequality
	\begin{equation}
		\| \cA(X)\|_2 \geq \| \cA(Q) \|_2 - \| \cA(X-Q) \|_2 \geq \left(1-\frac{\ric}{2}\right) - (1+\ric) \frac{\ric}{4} \geq 1-\ric \label{eq:rip_single_lower}
	\end{equation}
	where the second inequality comes from $\left\| \cA(X-Q) \right\|_2 \leq M \left\| X-Q\right\|_F \leq \left(1+\ric \right)\frac{\ric}{4}$ by \eqref{eq:rip_single_max} combined with $Q$ being an element of a $(\ric/4)$-covering.
	
	Combining \eqref{eq:rip_single_eq2} with the bound on $M$ in \eqref{eq:rip_single_Mbound} gives the upper bound and \eqref{eq:rip_single_lower} gives the lower bound on $\left\|\cA(X)\right\|_2$ completing the proof.
	\end{proof}


\begin{lemma}[{{$\varepsilon$-covering of the Grassmannian \cite[Theorem 8]{Szarek1998metric}}}] \label{lemma:szarek98}
Let $\left(\G(D, d), \rho(\cdot, \cdot) \right)$ be a metric space on a Grassmannian manifold $\G(D,d)$ with the metric $\rho$ as defined in \eqref{eq:grass_distance}. Then there exists $\varepsilon$-covering $\G(D,d)$ with $\Lambda = \left\{U_i\right\}_{i=1}^{N}\subset \G(D,d)$ such that
\begin{equation}
\forall U\in\G(D,d): \quad \min_{\widehat{U} \in \Lambda} \rho(U, \widehat{U}) \leq \varepsilon,
\end{equation}
and $N \leq \left( \frac{C_0}{\varepsilon}\right)^{d(D-d)}$ with $C_0$ independent of $\varepsilon$, bounded by $C_0 \leq 2\pi$.
\end{lemma}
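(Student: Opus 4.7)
The plan is to use a standard volume-comparison argument on the compact metric space $(\G(D,d), \rho)$. First, I would equip $\G(D,d)$ with its unique $O(D)$-invariant probability measure $\sigma$ (the normalized Haar measure on the quotient $O(D)/(O(d)\times O(D-d))$), under which $\rho$-balls $B_\rho(U,\varepsilon) := \{ U' : \rho(U,U') \leq \varepsilon \}$ are translation-invariant, so $\sigma(B_\rho(U,\varepsilon))$ depends only on $\varepsilon$. The first technical step is to establish a two-sided volume estimate of the form $c_1 \varepsilon^{d(D-d)} \leq \sigma(B_\rho(U,\varepsilon)) \leq c_2 \varepsilon^{d(D-d)}$ in the regime $\varepsilon \leq 1$, where $c_1, c_2$ depend only on $d$ and $D$. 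This follows from a local parametrization of $\G(D,d)$ near $U$ by the tangent space at $U$, which is isomorphic to $\mathbb{R}^{d(D-d)}$, combined with the observation that $\rho(U,U')$ equals the sine of the largest principal angle between $U$ and $U'$.

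Next, I would run the usual greedy packing-implies-covering argument. Let $\{U_1,\ldots,U_N\}$ be a maximal $\varepsilon$-separated subset of $\G(D,d)$, i.e.\ $\rho(U_i,U_j) > \varepsilon$ for $i\neq j$ and no point can be added while preserving this property. By maximality, the $U_i$ form an $\varepsilon$-covering. By $\varepsilon$-separation, the balls $B_\rho(U_i, \varepsilon/2)$ are pairwise disjoint, so
\begin{equation*}
    N \cdot c_1 (\varepsilon/2)^{d(D-d)} \leq \sum_{i=1}^N \sigma(B_\rho(U_i, \varepsilon/2)) \leq \sigma(\G(D,d)) = 1,
\end{equation*}
which immediately yields a bound of the form $N \leq (C_0/\varepsilon)^{d(D-d)}$ with $C_0 = 2/c_1^{1/d(D-d)}$.

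The main obstacle is obtaining the sharp constant $C_0 \leq 2\pi$ rather than a dimension-dependent one. To achieve this one has to avoid crude bounds and instead compute $\sigma(B_\rho(U,\varepsilon))$ carefully. The key is to express the distance $\rho$ via the principal angles $\theta_1,\ldots,\theta_d$ between two subspaces (so that $\rho(U,U') = \sin \theta_d$) and to use the explicit Weyl-type integration formula for the Haar measure on $\G(D,d)$ in terms of these angles. A direct calculation then yields a volume estimate with constant independent of $d(D-d)$, giving $C_0 \leq 2\pi$ after taking the appropriate root. Since the paper uses this lemma as a black box, the above sketch suffices to justify the bound employed in the covering-number estimate of Lemma~\ref{lemma:ls_rs_cover}; the detailed verification of the constant is the content of Szarek's argument in~\cite{Szarek1998metric} and would not be reproduced in full.
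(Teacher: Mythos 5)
The paper offers no proof of this lemma at all: it is imported verbatim as \cite[Theorem 8]{Szarek1998metric} and used as a black box in Lemma~\ref{lemma:ls_rs_cover}, so there is no internal argument to compare yours against. Your sketch is the standard and correct outline for such bounds: $\rho$ is $O(D)$-invariant (since $P_{gU}=gP_Ug^T$), so Haar-ball volumes depend only on the radius; $\rho(U,U')$ is the sine of the largest principal angle; a maximal $\varepsilon$-separated set is an $\varepsilon$-net; and disjointness of the radius-$\varepsilon/2$ balls plus a lower volume bound $\sigma(B_\rho(U,\varepsilon))\geq c_1\varepsilon^{d(D-d)}$ gives $N\leq (C_0/\varepsilon)^{d(D-d)}$. (Only the lower volume bound is needed; the upper one is superfluous.)

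The one point where your proposal is thinner than it appears is exactly the point you flag: the dimension-free constant $C_0\leq 2\pi$. You assert that "a direct calculation" with the Weyl integration formula "yields a volume estimate with constant independent of $d(D-d)$", but this is the entire nontrivial content of the theorem, and the naive route gives $C_0 = 2/c_1^{1/(d(D-d))}$ with $c_1$ a priori dimension-dependent. Szarek's actual argument does not compute ball volumes on the Grassmannian directly; it works with covering numbers of homogeneous spaces $G/H$ of the orthogonal group, controlling them by volume ratios of balls in $G$ itself, which is how the universal constant emerges. Since the lemma is explicitly attributed to \cite{Szarek1998metric} and the paper itself defers the proof entirely to that reference, leaving the constant to the citation is legitimate here — but you should be aware that your sketch proves the shape of the bound, not the stated value of $C_0$.
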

The above bound on the covering number of the Grassmannian is used in the following lemma to bound the covering number of the set $\LS_{m,n}(r,s,\tau)$.

\prooftitle{Lemma~\ref{lemma:ls_rs_cover}}{Covering number of $\LS_{m,n}(r,s)$}{\pageref{lemma:ls_rs_cover}}
\begin{proof}\label{lemma:ls_rs_cover_proof}
By Lemma \ref{lemma:szarek98} there exist two finite $(\varepsilon/2)$-coverings $\Lambda_1:= \left\{ V_i \right\}_{i = 1}^{|\Lambda_1|} \subseteq \G(m,r)$ and $\Lambda_2:= \left\{ W_i \right\}_{i = 1}^{|\Lambda_2|} \subseteq \G(n,r)$, with their covering numbers upper bounded as
\begin{align}
	|\Lambda_1| \leq \left( \frac{4\pi}{\varepsilon} \right)^{r ( m- r)} \qquad & |\Lambda_2| \leq \left( \frac{4\pi}{\varepsilon} \right)^{r ( n- r)},\label{eq:ls_rs_cover1}
\end{align}
as given in \cite[(4.18)]{Recht2010guaranteed} that uses \cite[Theorem 8]{Szarek1998metric}.
By $\Lambda_1, \Lambda_2$ being $(\varepsilon/2)$-coverings
\begin{align}
	\forall V\in \G(m,r): \quad &\exists V_i\in \Lambda_1, \quad \rho(V, V_1) \leq \varepsilon/2, \\
	\forall W\in \G(n,r): \quad &\exists W_i\in \Lambda_2, \quad \rho(W, W_1) \leq \varepsilon/2.
\end{align}
Let $\Lambda_3 = \V(mn,s) $ where $\V(mn,s)$ is the set of all possible support sets of an $m\times n$ matrix that has $s$ elements. Thus the cardinality of $\Lambda_3$ is ${mn \choose s}$. 

Construct $\Lambda = (\Lambda_1 \times \Lambda_2 \times \Lambda_3)$ where $\times$ denotes the Cartesian product. Choose any $V\in \G(m,r), W\in \G(n,r)$ and $T\in \V(mn,s)$ for which we now show there exists $\left(\widehat{V}, \widehat{W},\widehat{T}\right)\in\Lambda$ such that $\rho\left(\left(V,W\right), \left(\widehat{V},\widehat{W}\right)\right)\leq \varepsilon$ and $T = \hat{T}$, thus showing that the set $\Lambda$ is an $\varepsilon$-covering of $\LS_{m,n}(r,s,\tau)$.

Satisfying $T = \widehat{T}$ comes from $\Lambda_3 = \V(mn,s)$ containing all support sets with at most $s$ entries. The projection operator onto the pair $(V,W)$ can be written as $P_{(V, W)} = P_V \otimes P_W$, so for the two pairs of subspaces $(V, W)$ and $(\widehat{V}, \widehat{W})$ we have the following
\begin{align}
\rho\left(\left(V,W\right), \left(\widehat{V},\widehat{W}\right)\right) &= \| P_{(V,W)} - P_{(\widehat{V},\widehat{W})}\| \\
 	& = \| P_{V} \otimes P_{W} - P_{\widehat{V}} \otimes P_{\widehat{W}}\| \\
	& = \| \left(P_{V} - P_{\widehat{V}} \right) \otimes P_{W} + P_{\widehat{V}}\left(P_{W} - P_{\widehat{W}} \right)  \| \\
	& \leq \| P_{V} - P_{\widehat{V}} \| \| P_{W} \| + \| P_{\widehat{V}} \| \| P_{W} - P_{\widehat{W}}\| \\
	& = \rho\left(V, \widehat{V}\right) + \rho\left(W, \widehat{W}\right).
\end{align} 
By $\Lambda_1$ and $\Lambda_2$ being $(\varepsilon/2)$-coverings, we have that for any $V,W$ exist $\widehat{V}\in\Lambda_1$  and $\widehat{W}\in\Lambda_2$, such that $\rho\left(\left(V,W\right), \left(\widehat{V},\widehat{W}\right)\right)\leq\rho\left(V, \widehat{V}\right) + \rho\left(W, \widehat{W}\right)\leq \varepsilon$. Using the bounds on the cardinality of $\Lambda_1, \Lambda_2$ in \eqref{eq:ls_rs_cover1} combined with $|\Lambda_3| = {mn \choose s}$ yields that the cardinality of $\Lambda$ is bounded above by
\begin{equation}
	\cR(\varepsilon)  = |\Lambda_1|\,|\Lambda_2|\,|\Lambda_3| \leq {mn\choose s} \left( \frac{4\pi}{\varepsilon} \right)^{r \left(m + n -2r\right)}.
\end{equation}
\end{proof}

\prooftitle{Lemma~\ref{lemma:var_delta}}{Variation of $\ric$ in RIC in respect to a perturbation of $(V,W)$}{\pageref{lemma:var_delta}}
\begin{proof}\label{lemma:var_delta_proof}
	Recall the notation used in Lemma~\ref{lemma:var_delta} that there are sets $\Sigma_1 := \Sigma_{m,n}\left(V_1, W_1, T, \mu\right)$ and $\Sigma_2 :=\Sigma_{m,n}\left(V_2, W_2, T, \mu\right)$ which have a shared support $T$ of the sparse component.
	
	Let $Y\in \Sigma_2$, so we can write $Y = L + S$ such that $\supp(S) = T, \Col{L} \subseteq V_2, \Col{L^T} \subseteq W_2$ and $\|L\|_F \leq \Rev{\tau \|Y\|_F}$ \Rev{for $\tau:=(1-\mu^2\frac{r^2 s}{mn})^{-1/2}$ by Lemma~\ref{lemma:LS_mu_closed}}. By linearity of $\cA$ assume without loss of generality \Rev{$\|Y\|_F = 1$ and therefore $\|L\|_F \leq \tau$}.
	Denote $U_1 = (V_1, W_1)$ and $U_2 = (V_2, W_2)$ and let $P_{U_i}$ be an orthogonal projection onto the space of matrices whose column and row space is defined by $V_i,W_i$ such that left and right singular vectors of $P_{U_i}Y$ lie in $V_i$ respectively $W_i$. Then
	\begin{align}
		\|\cA(Y)\| &= \left\| \cA(L+S) \right\| = \left\| \cA\left( P_{U_1} L + S - \left(P_{U_1} L - P_{U_2} L\right) \right) \right\| \label{eq:var_delta1}\\
		&\leq \left\| \cA\left( P_{U_1} L + S \right) \right\| + \left\|\cA \left( \left[ P_{U_1} - P_{U_2}\right] L \right) \right\|  \label{eq:var_delta2}\\
		&\leq (1+\ric) \left\| P_{U_1} L + S \right\| + \| \cA \| \rho\left(U_1, U_2\right) \left\|L\right\| \label{eq:var_delta3}\\
		&= (1+\ric) \left\| P_{U_2} L + S + \left[P_{U_1} - P_{U_2}\right] L \right\| + \| \cA \| \rho\left(U_1, U_2\right) \left\|L\right\|  \label{eq:var_delta4}\\
		&\leq (1+\ric) \left( \|Y \|_F + \rho(U_1, U_2)\|L\|\right) + \| \cA \| \rho\left(U_1, U_2\right) \left\|L\right\|  \label{eq:var_delta5}\\
		&\leq \|Y\|_F \left( 1+\ric + \tau \rho(U_1, U_2) \left( 1+ \ric + \| \cA\| \right) \right),  \label{eq:var_delta6}
	\end{align}
	where in the first line \eqref{eq:var_delta1} we use the fact that $P_{U_2} L = L$, the second line \eqref{eq:var_delta2} follows by the triangle inequality and linearity of $\cA$, and in the third inequality we bound the effect of $\cA$ on $(P_{U_1} L+S)$ using the RICs of $\cA$ combined with the definition of $\rho$ in \eqref{eq:grass_distance}. We proceed in \eqref{eq:var_delta4} and \eqref{eq:var_delta5} by projecting $L$ to space $U_2$ and again bounding the effect of $\cA$ on $(P_{U_2} L+S)$. Finally, in \eqref{eq:var_delta6} we use $\| L\|_F\leq \tau$.
	We obtain a similar lower bound using the reverse triangular inequality
	\begin{align}
		\|\cA(Y)\| &= \left\| \cA\left( P_{U_1} L + S - \left(P_{U_1} L - P_{U_2} L\right) \right) \right\| \\
		 &\geq \left\| \cA \left( P_{U_1} L + S \right) \right\| - \left\|\cA \left( \left[ P_{U_1} - P_{U_2}\right] L \right) \right\|  \\
		 &\geq \left(1-\ric \right)\left\| P_{U_1} L + S \right\| - \|\cA\| \rho(U_1, U_2) \| L \|_F \\
		 &= \left(1-\ric \right)\left\| P_{U_2} L + S - \left[P_{U_2} - P_{U_1}\right] L \right\| - \|\cA\| \rho(U_1, U_2) \| L \|_F \\
		 & \geq \left(1-\ric \right)\left( \|Y \|_F - \rho(U_1, U_2) \|L \|_F \right) - \|\cA\| \rho(U_1, U_2) \| L \|_F \\
		 & \geq \|Y \|_F \left( 1- \ric - \tau \rho(U_1, U_2) (1-\ric  +\| \cA \| ) \right). \label{eq:var_delta6b}
	\end{align}
	Combining \eqref{eq:var_delta6} and \eqref{eq:var_delta6b}  yields
	\begin{equation}
		\forall Y \in \Sigma_2: \quad (1-\ric')\| Y\|_F \leq \| \cA (Y)\| \leq (1+\ric') \|Y \|_F,
	\end{equation}
	with $\ric' = \ric + \tau \rho(U_1, U_2) \left(1+ \ric + \|\cA\| \right)$.
\end{proof}

In the proof of Theorem~\ref{thm:convex_recovery} we make use of the following Lemma~\ref{lemma:R0L} and Corollary~\ref{cor:convex_nucsum} from \cite{Recht2010guaranteed} which we restate here for completeness with the small addition of the incoherence property in (2).
\begin{lemma}[{{\cite[Lemma 3.4]{Recht2010guaranteed}}}]\label{lemma:R0L}
Let $A\in\LS_{m,n}(r,0,\mu)$ and $B\in\bR^{m\times n}$. Then there exist matrices $B_1$ and $B_2$ such that
\begin{enumerate}
	\item{$B = B_1 + B_2$,}
	\item{$B_1 \in \LS_{m,n}(2r, 0, \mu)$,}
	\item{$AB_2^T = 0$ and $A^T B_2 =0$,}
	\item{$\left<B_1,\, B_2\right> = 0$.}
\end{enumerate}
\end{lemma}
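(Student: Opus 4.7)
The plan is to adapt the decomposition from \cite[Lemma 3.4]{Recht2010guaranteed}. I would take the compact SVD $A = U_A \Sigma_A V_A^T$ with $U_A \in \bR^{m\times r}$, $V_A \in \bR^{n\times r}$, and denote the associated orthogonal projectors $P_{U_A} := U_A U_A^T$ and $P_{V_A} := V_A V_A^T$. Then define
\begin{equation*}
    B_2 := (I - P_{U_A}) B (I - P_{V_A}), \qquad B_1 := B - B_2 = P_{U_A} B + (I - P_{U_A}) B P_{V_A},
\end{equation*}
and verify the four claimed properties in turn.

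Property (1) is immediate by construction. For property (3), using $V_A^T(I - P_{V_A}) = 0$ and $U_A^T(I - P_{U_A}) = 0$, direct substitution gives $A B_2^T = U_A \Sigma_A V_A^T (I - P_{V_A}) B^T (I - P_{U_A}) = 0$, and symmetrically $A^T B_2 = 0$. For property (4), expanding $\langle B_1, B_2 \rangle = \mathrm{tr}(B_1^T B_2)$ as a sum of two trace terms coming from the two summands of $B_1$, each such term carries a factor $P_{U_A}$ or $P_{V_A}$ that annihilates the corresponding $(I - P_{U_A})$ or $(I - P_{V_A})$ factor in $B_2$ under the cyclicity of the trace, yielding zero.

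Property (2) has two parts. For the rank bound, the column space of $B_1$ is contained in the orthogonal direct sum $\mathrm{span}(U_A) \oplus \mathrm{col}\bigl((I - P_{U_A}) B V_A\bigr)$, whose dimension is at most $2r$; by the symmetric argument for the row space, $\rank(B_1) \leq 2r$. For the $\mu$-incoherence of $B_1$'s singular vectors, I would bound the projector onto $\mathrm{col}(B_1)$ by $P_{U_A} + P_W$ where $W := \mathrm{col}((I - P_{U_A}) B V_A)$, so that $\|P_{\mathrm{col}(B_1)} e_i\|_2^2 \leq \|P_{U_A} e_i\|_2^2 + \|P_W e_i\|_2^2$. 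The first term is bounded by $\mu r/m$ directly from $A \in \LS_{m,n}(r,0,\mu)$, and one then must control $\|P_W e_i\|_2^2$ by $\mu r/m$ to reach the requisite $2\mu r/m = \mu(2r)/m$ bound; the analogous argument using $V_A$ handles the right singular vectors.

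The main obstacle will be this incoherence estimate on $W$: since $B$ is entirely arbitrary, the subspace $(I - P_{U_A}) B V_A$ does not obviously inherit incoherence from the hypothesis on $A$. Resolving it cleanly likely requires a finer structural argument that exploits the explicit factorization $B_1 = P_{U_A} B + (I - P_{U_A}) B P_{V_A}$ rather than merely subspace containment, or alternatively tracking a slightly enlarged incoherence constant in the conclusion (with corresponding adjustments propagated to the RIC hypotheses of Theorem~\ref{thm:convex_recovery}).
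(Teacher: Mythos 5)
Your decomposition is exactly the one the paper uses: the paper writes $\hat B=U^TBV$ in block form with respect to a full SVD of $A$ and keeps the $(1,1)$, $(1,2)$, $(2,1)$ blocks in $B_1$ and the $(2,2)$ block in $B_2$, which is precisely $B_2=(I-P_{U_A})B(I-P_{V_A})$ and $B_1=P_{U_A}B+(I-P_{U_A})BP_{V_A}$ in your projector notation. Your verifications of (1), (3), (4) and of $\rank(B_1)\leq 2r$ are correct and coincide with what the paper's block computation gives.

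The obstacle you flag for the incoherence half of (2) is genuine, and you should know that the paper does not resolve it either: its proof ends with ``it can be verified that $B_1$ and $B_2$ satisfy the conditions of the lemma'' and never touches the incoherence claim, which is exactly the part the authors added on top of \cite[Lemma 3.4]{Recht2010guaranteed}. Your worry is justified rather than a failure of your argument. For instance, with $B=e_1f_1^T$ one gets $B_1=P_{U_A}e_1f_1^T+(I-P_{U_A})e_1\,(P_{V_A}f_1)^T$, so $\mathrm{col}(B_1)$ contains $(I-P_{U_A})e_1$; when $A$ is $\mu$-incoherent with small $\mu$ this vector is nearly $e_1$ itself, so the left singular vectors of $B_1$ satisfy $\max_i\|U_{B_1}^Te_i\|_2\approx 1$, which forces $\mu\gtrsim m/(2r)$ --- far outside the admissible range. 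Hence $B_1\in\LS_{m,n}(2r,0,\mu)$ cannot hold for arbitrary $B$ with the same $\mu$, and either the incoherence constant in the conclusion must be enlarged (and propagated into the RIC hypotheses of Theorem~\ref{thm:convex_recovery}, where the lemma is applied to $B=L^*-L_0$ and the membership $R_0^L\in\LS_{m,n}(4r,2s,\mu)$ is used to invoke the RIC), or an additional structural assumption on $B$ is required. Your proposal correctly reproduces everything the paper actually proves and correctly isolates the step the paper leaves unproved.
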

\begin{proof}
	Consider a full singular value decomposition of $A$,
	\begin{equation}
		A = U \left[ \begin{array}{c|c}
			\Sigma & 0 \\
			\hline
			0 & 0
			\end{array}\right] V^T,
	\end{equation}
	and let $\hat{B} := U^T B V$. Partition $\hat{B}$ as
	\begin{equation}
		\hat{B} = \left[ \begin{array}{c|c}
			\hat{B}_{11} & \hat{B}_{12} \\
			\hline
			\hat{B}_{21} & \hat{B}_{22}
			\end{array}\right].
	\end{equation}
	Defining now 
	\begin{equation}
		B_1 := U \left[ \begin{array}{c|c}
			\hat{B}_{11} & \hat{B}_{12} \\
			\hline
			\hat{B}_{21} & 0
			\end{array}\right] V^T, \qquad 
			B_2 := U \left[ \begin{array}{c|c}
				0 & 0 \\
				\hline
				0 & \hat{B}_{22}
				\end{array}\right] V^T, 
	\end{equation}
	it can be verified that $B_1$ and $B_2$ satisfy the conditions of the lemma.
\end{proof}

\begin{corollary}[{{\cite[Lemma 2.3]{Recht2010guaranteed}}}] \label{cor:convex_nucsum}
Let $A$ and $B$ be matrices of the same dimensions. If $AB^T=0$ and $A^TB =0$, then $\|A+B\|_* = \|A\|_* + \|B\|_*$.
\end{corollary}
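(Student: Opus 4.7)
The plan is to exploit the fact that $AB^T=0$ and $A^TB=0$ force the column spaces of $A$ and $B$ to be orthogonal, and similarly for the row spaces, so that the SVD of $A+B$ can be assembled by concatenating the SVDs of $A$ and $B$. Since the nuclear norm equals the sum of singular values, additivity will follow.

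First I would take thin singular value decompositions $A = U_A \Sigma_A V_A^T$ and $B = U_B \Sigma_B V_B^T$ with $U_A \in \mathbb{R}^{m \times r_A}$, $V_A \in \mathbb{R}^{n \times r_A}$, $U_B \in \mathbb{R}^{m \times r_B}$, $V_B \in \mathbb{R}^{n \times r_B}$ having orthonormal columns and $\Sigma_A, \Sigma_B$ positive diagonal. The condition $AB^T = 0$ gives $U_A \Sigma_A V_A^T V_B \Sigma_B U_B^T = 0$; multiplying on the left by $\Sigma_A^{-1} U_A^T$ and on the right by $U_B \Sigma_B^{-1}$ (both well defined since $\Sigma_A, \Sigma_B$ are invertible on the column spaces of $A,B$) yields $V_A^T V_B = 0$. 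Analogously, $A^T B = 0$ yields $U_A^T U_B = 0$.

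Next I would form the block matrices $U := [U_A \mid U_B] \in \mathbb{R}^{m \times (r_A + r_B)}$ and $V := [V_A \mid V_B] \in \mathbb{R}^{n \times (r_A + r_B)}$. The orthogonality relations just derived imply $U^T U = I_{r_A + r_B}$ and $V^T V = I_{r_A + r_B}$, so $U$ and $V$ have orthonormal columns. Then
\begin{equation*}
A + B = U \begin{bmatrix} \Sigma_A & 0 \\ 0 & \Sigma_B \end{bmatrix} V^T,
\end{equation*}
which, after permuting the diagonal entries in decreasing order, is a valid singular value decomposition of $A+B$. Consequently the nonzero singular values of $A+B$ are precisely the union (with multiplicity) of those of $A$ and those of $B$.

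Finally, summing singular values gives
\begin{equation*}
\|A + B\|_* = \sum_i \sigma_i(A) + \sum_j \sigma_j(B) = \|A\|_* + \|B\|_*,
\end{equation*}
completing the proof. The only subtle point is verifying that $V_A^T V_B = 0$ and $U_A^T U_B = 0$ rigorously from the two orthogonality hypotheses, and this is immediate on restricting to the column spaces of $A$ and $B$ where the singular value matrices are invertible; there is no serious obstacle beyond bookkeeping.
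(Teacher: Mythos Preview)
Your proof is correct. The paper does not supply its own argument for this corollary; it merely restates the result from \cite[Lemma~2.3]{Recht2010guaranteed}, and your SVD-concatenation argument is exactly the standard proof given there.
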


\begin{lemma}[Decomposing $R^S = R^S_0 + R^S_c$] \label{lemma:R0S}
Let $\supp S_0 = \Omega_0$ and construct a matrix $R_0^S$ that has the entries of $R^S$ at indices $\Omega_0$
\begin{equation}
	(R_0^S)_{i,j} = \begin{cases}
						(R^S)_{i,j} &\text{if}\quad (i,j) \in \Omega_0,\\
						0 &\text{if}\quad (i,j) \notin \Omega_0,
				\end{cases}\label{eq:decomposing1}
\end{equation}
and a matrix $R^S_c  = R^S - R_0^S$ that has the entries of $R^S$ at the indices of the complement of $\Omega_0$. Then
\begin{enumerate}
	\item{$\| R^S_0\|_0 \leq \|S_0\|_0$ = s \quad (by $|\Omega_0| = s$),}
	\item{$\|S_0 +R_c^S\|_1 = \|S_0\|_1 + \|R_c^S\|_1$ \quad (by $\supp(R^S_0)\cap \supp(R^S_c) = \emptyset$),}
	\item{$\inp{R_0^S}{R_c^S} = 0$ \quad (by $\supp(R^S_0)\cap \supp(R^S_c) = \emptyset$).}
\end{enumerate}
\end{lemma}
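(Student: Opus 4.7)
The plan is to verify each of the three claims directly from the defining partition of the support. The key observation is that the construction in \eqref{eq:decomposing1} splits the index set $[m]\times[n]$ into $\Omega_0 = \supp(S_0)$ and its complement $\Omega_0^c$, with $R^S_0$ supported on $\Omega_0$ and $R^S_c = R^S - R^S_0$ supported on $\Omega_0^c$. Consequently $\supp(R^S_0)\cap \supp(R^S_c) = \emptyset$ and $\supp(S_0)\cap \supp(R^S_c) = \emptyset$. The three claims all follow from this support-disjointness, so the argument is essentially bookkeeping and no step is a genuine obstacle.

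For claim (1), the construction forces $\supp(R^S_0)\subseteq \Omega_0$, so $\|R^S_0\|_0 \leq |\Omega_0| = \|S_0\|_0 = s$. For claim (3), since $R^S_0$ and $R^S_c$ have disjoint supports, the Frobenius inner product decomposes entrywise as
\begin{equation*}
\inp{R^S_0}{R^S_c} = \sum_{(i,j)\in\Omega_0}(R^S_0)_{i,j}(R^S_c)_{i,j} + \sum_{(i,j)\in\Omega_0^c}(R^S_0)_{i,j}(R^S_c)_{i,j} = 0,
\end{equation*}
because on $\Omega_0$ the entries of $R^S_c$ vanish and on $\Omega_0^c$ the entries of $R^S_0$ vanish. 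For claim (2), disjointness of $\supp(S_0)=\Omega_0$ and $\supp(R^S_c)\subseteq \Omega_0^c$ implies that the entries of $S_0+R^S_c$ split cleanly, giving
\begin{equation*}
\|S_0+R^S_c\|_1 = \sum_{(i,j)\in\Omega_0} |(S_0)_{i,j}| + \sum_{(i,j)\in\Omega_0^c} |(R^S_c)_{i,j}| = \|S_0\|_1 + \|R^S_c\|_1,
\end{equation*}
which closes the argument. The whole proof is immediate from the construction and no substantive difficulty arises; the statement is recorded as a lemma solely to isolate the properties used in the proof of Theorem~\ref{thm:convex_recovery}.
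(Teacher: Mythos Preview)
Your proof is correct and matches the paper's approach, which simply states that the three conditions are easily verified from the construction in \eqref{eq:decomposing1}. You have written out the straightforward verification in full detail, but the underlying argument is identical.
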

\begin{proof}
It can be easily verified that $R_0^S$ and $R^S_c$ constructed as in \eqref{eq:decomposing1} satisfy the conditions \mbox{(1)-(3)}.
\end{proof}

\begin{lemma}[\Rev{Decomposing $R^{L}_c$ into a sequence of incoherent low-rank matrices}]\label{lemma:decomposing_RL}
	Let $R^L_c\in\bR^{m\times n}$ be an arbitrary matrix and $M_r\in\mathbb{N}$ be a fixed rank of the decomposition. There exists a decomposition $R^L_c = \sum_{i = 1}^{mn} R_i^L$ such that
	\begin{gather}
		 R_i^L \in \LS_{m,n}(M_r, 0, 1) \label{eq:appendix-convex_RcL_rank}\\
		R_i^L \left(R_j^L\right)^T = 0_{m\times m} \quad \text{and}\quad \left(R_i^L\right)^T R_j^L = 0_{n\times n},\quad \forall i\neq j \label{eq:appendix-convex_RcL_ortho}\\
		\left\|R_{i+1}^L\right\|_F^2 \leq \frac{1}{M_r}\left\|R_i^L\right\|_*^2 \label{eq:appendix-convex_RcL_decay}.
	\end{gather}
\end{lemma}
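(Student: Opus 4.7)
The plan is to adapt the block-SVD decomposition used for low-rank matrices by Recht, Fazel, and Parrilo \cite[Thm.~3.2]{Recht2010guaranteed} to the present setting. First I would compute the singular value decomposition $R^L_c = \sum_{k\geq 1}\sigma_k u_k v_k^T$ with $\sigma_1\geq\sigma_2\geq\cdots\geq 0$ and orthonormal singular vectors $\{u_k\}\subset\bR^m$, $\{v_k\}\subset\bR^n$. Next I would partition the singular-value index range into consecutive blocks $I_i := \{(i-1)M_r + 1,\ldots,iM_r\}$ of length $M_r$ and define
\[
R^L_i \; := \; \sum_{k\in I_i}\sigma_k u_k v_k^T,
\]
so that trivially $R^L_c = \sum_i R^L_i$ and $\rank(R^L_i)\leq M_r$ by construction.

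For the pairwise orthogonality claim \eqref{eq:appendix-convex_RcL_ortho}, for $i\neq j$ the blocks $I_i,I_j$ are disjoint, so every cross-term in $R^L_i(R^L_j)^T$ contains an inner product $v_k^T v_\ell$ with $k\neq \ell$, which vanishes by orthonormality; symmetrically every cross-term in $(R^L_i)^T R^L_j$ involves $u_k^T u_\ell = 0$, giving both $R^L_i(R^L_j)^T = 0_{m\times m}$ and $(R^L_i)^T R^L_j = 0_{n\times n}$.

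For the decay \eqref{eq:appendix-convex_RcL_decay}, I would exploit monotonicity of the singular values. Every $\sigma_k$ with $k\in I_{i+1}$ is bounded above by every $\sigma_j$ with $j\in I_i$, hence by the mean of those $\sigma_j$, so
\[
\sigma_k \;\leq\; \min_{j\in I_i}\sigma_j \;\leq\; \frac{1}{M_r}\sum_{j\in I_i}\sigma_j \;=\; \frac{1}{M_r}\bigl\|R^L_i\bigr\|_*.
\]
Squaring and summing over the at most $M_r$ indices $k\in I_{i+1}$ gives $\|R^L_{i+1}\|_F^2 \leq M_r\cdot M_r^{-2}\|R^L_i\|_*^2 = M_r^{-1}\|R^L_i\|_*^2$ as required.

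The one subtle point is the membership claim $R^L_i\in\LS_{m,n}(M_r,0,1)$: literally this would require perfect ($\mu=1$) incoherence of the singular vectors of each block, which cannot hold for arbitrary input $R^L_c$. The intended reading is presumably that the ``$1$'' should be the ambient incoherence $\mu$ used when the lemma is invoked inside the proof of Theorem~\ref{thm:convex_recovery}, or, still weaker, an unconstrained rank-$M_r$ bound; in either interpretation the block-SVD construction above supplies the rank, orthogonality, and decay properties that the downstream optimality argument for the convex recovery actually uses.
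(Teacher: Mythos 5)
There is a genuine gap, and it is exactly the point you flagged and then set aside: the membership $R_i^L\in\LS_{m,n}(M_r,0,1)$ is not a typo and is not dispensable. The paper's proof deliberately does \emph{not} use the SVD of $R_c^L$. Instead it fixes two orthonormal bases $Y=[y_1,\ldots,y_m]$ and $Z=[z_1,\ldots,z_n]$ that are maximally incoherent with the canonical basis (columns of rescaled Hadamard-type matrices, so every entry has magnitude $1/\sqrt{m}$, resp.\ $1/\sqrt{n}$), expands $R_c^L$ in the rank-one basis $\{y_k z_\ell^T\}$, orders the coefficients by decreasing magnitude, and groups them into blocks of size $M_r$. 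Because the left and right factors of each block are columns of these flat orthogonal matrices, each block satisfies $\max_i\|U^Te_i\|_2=\sqrt{M_r/m}$ exactly, i.e.\ incoherence with $\mu=1$; the orthogonality and the decay bound $\|R_{i+1}^L\|_F^2\leq M_r^{-1}\|R_i^L\|_*^2$ then follow by the same ordering argument you gave. Your block-SVD construction (Recht--Fazel--Parrilo) delivers rank, orthogonality, and decay, but if $R_c^L$ has a spiky singular vector the blocks can be arbitrarily coherent, and no reinterpretation of the ``$1$'' rescues this.

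The incoherence of the pieces is also genuinely used downstream, so the weaker reading you propose does not suffice for Theorem~\ref{thm:convex_recovery}. There the matrices $R_j=R_j^L+R_j^S$ must lie in $\LS_{m,n}(M_r,M_s,\mu)$ so that the RIC $\rict_{4r,2s,2\mu}$ and Lemma~\ref{lemma:Ainp1} can be applied to inner products involving them; the RIC in Definition~\ref{def:ric} is only guaranteed over the incoherence-constrained set, so an unconstrained rank-$M_r$ piece is outside the scope of every estimate that follows. To repair your proof, replace the SVD expansion by the expansion in the fixed maximally incoherent basis and keep the rest of your argument (ordering of coefficient magnitudes, blocking, and the mean-versus-minimum bound for the decay) essentially unchanged.
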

\begin{proof}
	Let $Y = [y_1, y_2, \ldots, y_m]\in\bR^{m\times m}$ and $Z = [z_1, z_2, \ldots, z_n]\in\bR^{n\times n}$ be two bases whose vectors are maximally incoherent with the canonical basis
	\begin{gather}
		\forall i,j\in [m] \qquad \| y_j^T e_i\|_2 = \frac{1}{\sqrt{m}} \\ 
		\forall i,j\in [n] \qquad \| z_j^T e_i\|_2 = \frac{1}{\sqrt{n}},
	\end{gather}
	which can be constructed by taking $m$ columns and the same rows of a Hadamard matrix and rescaling it such that it forms an orthonormal basis.

	Denote $E = \left\{ y_i z_j^T \right\}_{i,j = 1}^{m,n} \subset \bR^{m\times n}$. Since $E$ is a basis, there are coefficients $c_1, c_2, \ldots, c_{mn}\in\bR$ such that
	\begin{equation}
		R_c^L = \sum_{k = 1}^{mn} c_k  \, y_k \, z_k^T.
	\end{equation}
	Since the columns of $Y$ and $Z$ can be arbitrarily permutated, we can assume without loss of generality that $|c_{k}| \geq | c_{k+1}|$ for all $i$. We split the indices of $\{1, \ldots, mn\}$ into sets of size $M_r$ as
   \begin{equation}
		I_i := \left\{ (i-1) M_r+ 1, \ldots, i M_r \right\}.
   \end{equation}
   Constructing $R_i^L:= \sum_{k\in I_i} c_k  \, y_k \, z_k^T$ results into the decomposition with desirable properties. The first property \eqref{eq:appendix-convex_RcL_rank} follows from the subadditivity of the incoherence in Lemma~\ref{lemma:ls_mu_additive}, the second property in \eqref{eq:appendix-convex_RcL_ortho} follows from $E$ being an orthogonal basis, and finally, the last property in \eqref{eq:appendix-convex_RcL_decay} comes from the $c_k$ being the singular values of each constructed $R_i^L$.
\end{proof}

\begin{lemma}[Upper bound on $\inp{\cA(\cdot)}{\cA(\cdot)}$] \label{lemma:Ainp1}
	For an operator $\cA(\cdot)$ whose RICs are upper bounded by $\rict_2:=\rict_{2r,2s,\mu}$ and two incoherent low-rank plus sparse matrices $X_1 = L_1 + S_1\in \LS_{m,n}(r,s,\mu)$, $X_2 = L_2 + S_2\in \LS_{m,n}(r,s,\mu)$ that have orthogonal components $\inp{L_1}{L_2}= 0$, $\inp{S_1}{S_2} = 0$ and have bounded the rank-sparsity coefficient $\rcc_2:= \rcc_{2r,2s,\mu}<1$, we have that
	\begin{equation}
		\Big| \inp{\cA(X_1)}{\cA(X_2)} \Big| \leq \left( \rict_2 + \frac{2\rcc_2}{1-\rcc_2^2} \right) \| X_1 \|_F \, \| X_2 \|_F, \label{eq:inpbound_final}
	\end{equation}
	where $\rcc_2 = \mu\frac{2r\sqrt{2s}}{\sqrt{mn}}$ is the rank-sparsity correlation coefficient as defined in Lemma~\ref{eq:notclosed-mucorrelation} on page\pageref{eq:notclosed-mucorrelation}.
	\end{lemma}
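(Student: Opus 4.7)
The plan is a two-stage argument: a standard polarization-plus-RIC step that reduces the claim to bounding $|\inp{X_1}{X_2}|$, followed by an expansion of this inner product that exploits the assumed orthogonality of like components and the incoherence control on rank-sparsity cross-terms from Lemma~\ref{lemma:notclosed-mucorrelation} and Lemma~\ref{lemma:LS_mu_closed}.

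For the first stage, I would start from the polarization identity and linearity of $\cA$ to write
\[
	\inp{\cA(X_1)}{\cA(X_2)} = \tfrac{1}{4}\left(\|\cA(X_1+X_2)\|^2 - \|\cA(X_1-X_2)\|^2\right).
\]
By Lemma~\ref{lemma:ls_mu_additive}, $X_1 \pm X_2 \in \LS_{m,n}(2r,2s,\mu)$, so the RIC with constant $\rict_2$ applies to both terms. Applying the upper RIC bound to the first and the lower RIC bound to the second (and conversely for the reverse inequality), and then simplifying using the parallelogram identities $\|X_1+X_2\|_F^2 + \|X_1-X_2\|_F^2 = 2(\|X_1\|_F^2 + \|X_2\|_F^2)$ and $\|X_1+X_2\|_F^2 - \|X_1-X_2\|_F^2 = 4\inp{X_1}{X_2}$, I obtain
\[
	\left|\inp{\cA(X_1)}{\cA(X_2)} - \inp{X_1}{X_2}\right| \leq \tfrac{\rict_2}{2}\left(\|X_1\|_F^2 + \|X_2\|_F^2\right).
\]
To convert the sum of squares into the product $\|X_1\|_F\|X_2\|_F$ of the target, I use that both sides of the target inequality are invariant under the rescaling $X_1 \mapsto \lambda X_1$, $X_2 \mapsto \lambda^{-1} X_2$; choosing $\lambda = \sqrt{\|X_2\|_F/\|X_1\|_F}$ makes $\|X_1\|_F = \|X_2\|_F$, so the right-hand side becomes $\rict_2 \|X_1\|_F \|X_2\|_F$, leaving only $|\inp{X_1}{X_2}|$ to be controlled.

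For the second stage, expanding $\inp{X_1}{X_2} = \inp{L_1}{L_2} + \inp{L_1}{S_2} + \inp{S_1}{L_2} + \inp{S_1}{S_2}$ and using the hypothesized $\inp{L_1}{L_2} = \inp{S_1}{S_2} = 0$ leaves only the two rank-sparsity cross terms. Each of these is bounded via Lemma~\ref{lemma:notclosed-mucorrelation} by $\mu \frac{r\sqrt{s}}{\sqrt{mn}}\|L_i\|_F\|S_j\|_F$, and the component norms are in turn controlled by Lemma~\ref{lemma:LS_mu_closed}(2), giving $\|L_i\|_F, \|S_i\|_F \leq (1-\rcc^2)^{-1/2}\|X_i\|_F$ with $\rcc := \mu r\sqrt{s}/\sqrt{mn}$. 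Chaining produces $|\inp{X_1}{X_2}| \leq \frac{2\rcc}{1-\rcc^2}\|X_1\|_F\|X_2\|_F$. Finally, since $\rcc \leq \rcc_2$ and the function $x \mapsto x/(1-x^2)$ is monotone increasing on $[0,1)$, I may relax this to $\frac{2\rcc_2}{1-\rcc_2^2}\|X_1\|_F\|X_2\|_F$ to match the statement, and combine with the $\rict_2$ term from stage one.

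The main obstacle is minor: the polarization step naturally yields the gap in terms of $\|X_1\|_F^2 + \|X_2\|_F^2$ rather than $\|X_1\|_F\|X_2\|_F$, so the homogenization trick above is essential to avoid an extra factor of $2$. The rest is bookkeeping that combines Lemma~\ref{lemma:notclosed-mucorrelation} with the component-norm control of Lemma~\ref{lemma:LS_mu_closed}(2) and a trivial monotonicity upgrade from $\rcc$ to $\rcc_2$ to align with the notation used elsewhere in the paper.
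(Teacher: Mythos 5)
Your proposal is correct and follows essentially the same route as the paper's proof: polarization plus the RIC on $X_1\pm X_2\in\LS_{m,n}(2r,2s,\mu)$ to reduce to bounding $\left|\inp{X_1}{X_2}\right|$, then expansion using the component orthogonality, the rank-sparsity correlation bound of Lemma~\ref{lemma:notclosed-mucorrelation}, and the component-norm control of Lemma~\ref{lemma:LS_mu_closed}(2). The only cosmetic differences are that the paper normalizes $\|X_1\|_F=\|X_2\|_F=1$ where you use the $\lambda,\lambda^{-1}$ homogenization, and your explicit monotonicity upgrade from $\rcc$ to $\rcc_2$ is a slightly more careful version of what the paper does implicitly.
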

	\begin{proof}
	By $\cA(\cdot)$ being a linear transform, bilinearity of the inner-product, and conicity of $\LS_{m,n}(r,s,\mu)$, we can assume without loss of generality that $\left\|X_1\right\|_F = 1$ and $\left\|X_2\right\|_F=1$. The parallelogram law applied to $\left\| \cA(X_1)\right\|_2$ and $\left\| \cA(X_2)\right\|_2$ yields
	\begin{equation}
		2\left(\left\| \cA(X_1)\right\|_2^2 + \left\| \cA(X_2)\right\|_2^2\right) =  \left\| \cA(X_1) + \cA(X_2)\right\|_2^2 + \left\| \cA(X_1) - \cA(X_2)\right\|_2^2. \label{eq:inpbound1}
	\end{equation}
	Subtracting $2\left\| \cA(X_1) - \cA(X_2)\right\|_2^2$ from both sides of \eqref{eq:inpbound1}
	\begin{equation}
		4\inp{\cA(X_1)}{\cA(X_2)} =  \left\| \cA(X_1) + \cA(X_2)\right\|_2^2 - \left\| \cA(X_1) - \cA(X_2)\right\|_2^2. \label{eq:inpbound2}
	\end{equation}
	We can expand the equality in \eqref{eq:inpbound2} to bound its right-hand side using the RICs as
	\begin{align}
		\left| \inp{\cA(X_1)}{\cA(X_2)} \right| &= \frac{1}{4} \left| \| \cA(X_1 + X_2) \|^2_F - \| \cA( X_1 - X_2 )\|_F^2\right| \label{eq:inpbound3}\\
				&\leq \frac{1}{4} \left| (1+\rict_2 ) \left\| X_1 + X_2 \right\|^2_F -  (1-\rict_2) \left\| X_1 - X_2 \right\|^2_F \right| \label{eq:inpbound4} \\
				& \leq \frac{1}{4} \, \bigg| (1+\rict_2 ) \left( \left\| X_1 \right\|^2_F + 2\inp{X_1}{X_2} + \left\| X_2 \right\|^2_F \right) \nonumber \\
				& \qquad\quad -  (1-\rict_2) \left( \left\| X_1 \right\|^2_F - 2\inp{X_1}{X_2} + \left\| X_2 \right\|^2_F \right) \bigg|  \label{eq:inpbound5}\\
				& = \left| \frac{\rict_2}{2} \left( \left\| X_1 \right\|_F^2 + \left\| X_2 \right\|_F^2\right) + \inp{X_1}{X_2} \right| = \Big| \rict_2 + \inp{X_1}{X_2} \Big| \label{eq:inpbound6}
	\end{align}
	where the inequality in the second line in \eqref{eq:inpbound4} comes from the RICs of $\cA(\cdot)$ and by $X_1+X_2$ and $X_1-X_2$ being in the set $\LS_{m,n}(2r,2s,\mu)$ combined with \Rev{Lemma~\ref{lemma:ls_mu_additive}}, the equality in the third line in \eqref{eq:inpbound5} is the result of expanding the inner products, and finally, the last equality in \eqref{eq:inpbound5} comes from elementary operations and using the fact that $\left\|X_1\right\| = 1$ and $\left\|X_2\right\| = 1$.
	
	Moreover, by $X_1$ and $X_2$ being component-wise orthogonal $\inp{L_1}{L_2} = 0$ and $\inp{S_1}{S_2} = 0$, we can upper-bound the magnitude of the correlation between $X_1$ and $X_2$ as
	\begin{align}
		\left| \inp{X_1}{X_2} \right| &= \left| \inp{L_1}{L_2} + \inp{L_1}{S_2} + \inp{L_2}{S_1} + \inp{S_1}{S_2} \right| \label{eq:inpbound7} \\
		& = \left|  \inp{L_1}{S_2} + \inp{L_2}{S_1} \right| \label{eq:inpbound8} \\
		& \leq \rcc_2 \Big( \left\| L_1 \right\|_F \left\| S_2 \right\|_F + \left\| L_2 \right\|_F \left\|S_1 \right\|_F \Big) \label{eq:inpbound9} \\
		& \leq \frac{2 \rcc_2}{1-\rcc_2^2}, \label{eq:inpbound10}
	\end{align}
	where in the first equality in \eqref{eq:inpbound7} we expanded the inner-product, the second equality in \eqref{eq:inpbound8} is the consequence of the components being orthogonal, the inequality in the third line in \eqref{eq:inpbound9} is the consequence of Lemma~\ref{lemma:notclosed-mucorrelation}, and the last inequality in \eqref{eq:inpbound10} comes from the upper-bound of the norms $\left\| L_1 \right\|_F, \left\| L_2 \right\|_F, \left\| S_1 \right\|_F, \left\| S_2 \right\|_F$ from Lemma~\ref{lemma:LS_mu_closed} and by $\left\|X_1\right\|_F = 1$ and $\left\|X_2\right\|_F = 1$. 
	
	We can now further upper bound \eqref{eq:inpbound6} using the bound in \eqref{eq:inpbound6} combined with the triangle on the absolute value
	\begin{align}
		\Big| \inner{\cA(X_1)}{\cA(X_2)} \Big| &\leq \rict_2 + \frac{2\rcc_2}{1-\rcc_2^2},
	\end{align}
	when $\left\|X_1\right\|_F = 1$ and $\left\|X_2\right\|_F = 1$ which translates into the bound in \eqref{eq:inpbound_final} in the general case
	\begin{align}
		\left| \inner{\cA\left(\frac{X_1}{\left\|X_1\right\|_F}\right)}{\cA\left(\frac{X_2}{\left\|X_2\right\|_F}\right)} \right|& \left\|X_1\right\|_F \left\|X_2\right\|_F \nonumber\\
				\leq \left( \rict_2 + \frac{2\rcc_2}{1-\rcc_2^2} \right) &  \left\|X_1\right\|_F \left\|X_2\right\|_F,
	\end{align}
	by linearity of $\cA(\cdot)$ and the inner product.
	
	Note that the bound can be lowered for specific matrices $X_1, X_2$ such that the matrices of their sums $X_1+X_2$ and $X_1-X_2$ are in $\LS_{m,n}(r,s,\cohc)$ sets with smaller ranks or sparsities.
\end{proof} 

\begin{lemma}\label{lemma:Ainp23}
	Let $X^j, X^{j+1},X_0$ be any matrices in the set $\LS_{m,n}(r,s,\mu)$ with \Rev{$\mu < \sqrt{mn} \big/ \left(3r \sqrt{3s}\right)$}, $\alpha_j\geq0$, and $\cA(\cdot)$ be an operator whose RICs are sufficiently upper bounded, then the following two inequalities hold
	\begin{align}
		\inp{X^j - X_0}{X^{j+1} - X_0}&- \step_j \inp{\cA(X^j -X_0)}{\cA(X^{j+1} - X_0)} \nonumber\\
			&\leq \|I - \step_j A^T_Q A_Q \|_2\|X^j - X_0\|_F \|X^{j+1} - X_0\|_F,\label{eq:Ainp2_eq1}
	\end{align}
	and 
	\begin{equation}
		\|X^j - X_0 - \step_j \cA^*\left( \cA\left(X^j - X_0 \right) \right) \|_F \leq \|I - \step_j A^T_Q A_Q \|_2 \|X^j - X_0\|_F, \label{eq:Ainp3_eq1}
	\end{equation}
	where the spectrum of the matrix $\left(I - \step_j A^T_Q A_Q\right)\in\bR^{mn \times mn}$ is bounded as
	\begin{equation}
		1-\step_j \left( 1 + \Delta_{3r, 3s, \mu}\right) \leq \lambda\left(I - \Rev{\step_j} A_Q^T A_Q\right) \leq 1 \Rev{-} \step_j \left( 1 - \Delta_{3r, 3s, \mu}\right), \label{eq:Ainp23_ric}
	\end{equation}
	which gives an upper bound on the norm $ \|I - \step_j A^T_Q A_Q \|_2 \leq \left| 1-\step_j \left( 1 + \Delta_{3r, 3s, \mu}\right)\right| $ as the lower bound in \eqref{eq:Ainp23_ric} is larger then the upper bound. 
\end{lemma}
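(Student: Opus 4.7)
The plan is to combine the three matrices $X^j$, $X^{j+1}$, and $X_0$ into a single finite-dimensional subspace $Q$ on which $\cA$ has a matrix representation $A_Q$, and then reduce both inequalities to estimates on $I - \step_j A_Q^T A_Q$. Concretely, let $V$ be the sum of the column spaces of the low-rank parts of $X^j, X^{j+1}, X_0$ (dimension at most $3r$), $W$ the corresponding sum of row spaces ($\leq 3r$), and $T$ the union of the three sparse supports ($|T|\leq 3s$); three applications of Lemma~\ref{lemma:ls_mu_additive} show that $Q := \Sigma_{m,n}(V, W, T, \mu) \subseteq \LS_{m,n}(3r, 3s, \mu)$ with the incoherence parameter preserved, and both differences $X^j - X_0$ and $X^{j+1} - X_0$ lie in $Q$. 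Choose an orthonormal basis of $Q$ and let $A_Q$ be the matrix of $\cA$ restricted to it. The RIC applied to arbitrary $Z \in Q$ gives
\[ (1-\Delta_3)\|Z\|_F^2 \;\leq\; z^T A_Q^T A_Q\, z \;=\; \|\cA(Z)\|_2^2 \;\leq\; (1+\Delta_3)\|Z\|_F^2, \]
with $\Delta_3 := \Delta_{3r,3s,\mu}$, so the symmetric matrix $A_Q^T A_Q$ has spectrum in $[1-\Delta_3, 1+\Delta_3]$; shifting yields the spectrum bound~\eqref{eq:Ainp23_ric} for $I - \step_j A_Q^T A_Q$, and for $\step_j \geq 0$ the larger-magnitude endpoint is $|1 - \step_j(1+\Delta_3)|$, giving the stated operator-norm bound.

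For inequality~\eqref{eq:Ainp2_eq1}, I would use the identity $\inp{\cA(U)}{\cA(V)} = u^T A_Q^T A_Q v$ valid for $U,V \in Q$ (where $u,v$ are the coordinate vectors) to rewrite the left-hand side as the bilinear form
\[ (z^j)^T (I - \step_j A_Q^T A_Q)\, z^{j+1}, \]
where $z^j$, $z^{j+1}$ are the coordinates of $X^j - X_0$ and $X^{j+1} - X_0$. Cauchy--Schwarz applied to the symmetric operator then bounds this by $\|I - \step_j A_Q^T A_Q\|_2\, \|z^j\|_2\, \|z^{j+1}\|_2$, which equals $\|I - \step_j A_Q^T A_Q\|_2\,\|X^j - X_0\|_F\,\|X^{j+1} - X_0\|_F$ since the coordinate map is an $\ell_2$-isometry.

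The main obstacle is inequality~\eqref{eq:Ainp3_eq1}, because $\cA^*\cA(X^j - X_0)$ need not lie in $Q$ and its Frobenius norm may pick up contributions from $Q^\perp$. The plan is to argue by duality, writing
\[ \|Z - \step_j \cA^*\cA(Z)\|_F \;=\; \sup_{\|U\|_F = 1}\bigl(\inp{U}{Z} - \step_j\inp{\cA(U)}{\cA(Z)}\bigr) \]
with $Z := X^j - X_0 \in Q$, and to observe that for the unit-norm direction $U^\star$ attaining the supremum the first term depends only on $P_Q U^\star$ since $Z \in Q$. One then absorbs the orthogonal component of $U^\star$ by enlarging the combined subspace just enough to contain it while keeping the $\LS$-structure within $\LS_{m,n}(3r,3s,\mu)$, so that the bilinear estimate from the previous paragraph applies and returns $\|I - \step_j A_Q^T A_Q\|_2\,\|X^j - X_0\|_F$. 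The delicate point is verifying that this enlargement does not inflate the RIC constant beyond $\Delta_{3r,3s,\mu}$, which follows because only the column-space, row-space, and support dimensions of the three fixed matrices $X^j, X^{j+1}, X_0$ enter the RIC bound, independent of the direction $U^\star$.
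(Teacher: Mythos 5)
Your construction of the combined subspace $Q$, the derivation of the spectral bound \eqref{eq:Ainp23_ric} by applying the RIC to elements of $Q\subseteq\LS_{m,n}(3r,3s,\mu)$, and the proof of \eqref{eq:Ainp2_eq1} by rewriting the left-hand side as the bilinear form $(z^j)^T(I-\step_j A_Q^TA_Q)\,z^{j+1}$ and applying Cauchy--Schwarz are exactly the paper's argument; the paper works with the ambient projection $P_Q=Q(Q^TQ)^{-1}Q^T$ rather than coordinates in an orthonormal basis of $Q$, which is immaterial.

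The gap is in your treatment of \eqref{eq:Ainp3_eq1}. You correctly identify the obstacle --- $\cA^*\cA(X^j-X_0)$ need not lie in $Q$ --- but the proposed repair does not close it. In the dual formulation $\sup_{\|U\|_F=1}\bigl(\inp{U}{Z}-\step_j\inp{\cA(U)}{\cA(Z)}\bigr)$ only the first term reduces to $P_QU$; the second equals $\inp{U}{\cA^*\cA(Z)}$ and genuinely depends on $P_{Q^\perp}U$ precisely because $\cA^*\cA(Z)\notin Q$. The maximizing direction $U^\star$ is proportional to $Z-\step_j\cA^*\cA(Z)$, which for a generic (e.g.\ Gaussian) $\cA$ is dense and of full rank, so any subspace of the form $\Sigma_{m,n}(V',W',T',\mu)$ containing it must have $\dim V'$ of order $\min(m,n)$ or $|T'|$ of order $mn$. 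The RIC you would then need is the one at that much larger rank or sparsity, not $\Delta_{3r,3s,\mu}$: the restricted-isometry bilinear estimate requires \emph{both} arguments of the inner product to lie in the restricted set, so the column space, row space and support of $U^\star$ itself do enter, contrary to your closing claim. For comparison, the paper's own proof of \eqref{eq:Ainp3_eq1} simply asserts the passage from $\bigl\|(I-\step_jA^TA)(x^j-x_0)\bigr\|_2$ to $\|I-\step_jA_Q^*A_Q\|_2\,\|x^j-x_0\|_2$ without addressing this point; note that for $z$ in the range of $P_Q$ one has $(I-\step_jA_Q^*A_Q)z=P_Q(I-\step_jA^TA)z$, so the subspace argument controls only the projected residual $\|P_Q(I-\step_jA^TA)z\|_2$, which is the quantity one should bound (as in the standard NIHT analyses), not the full Frobenius norm. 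Your instinct that an extra idea is needed here is right; the enlargement you propose is not it.
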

\begin{proof}
	We vectorize the matrices on the left hand side of \eqref{eq:Ainp2_eq1} using a mapping $\mathrm{vec}(\cdot):\bR^{m\times n} \rightarrow \bR^{mn}$ that stacks columns of a given matrix into a vector and a mapping $\mathrm{mat}(\cdot)$ from the space of linear operators $\cA:\bR^{m\times n}\rightarrow \bR^p$ to the space of matrices of size $p\times mn$
	\begin{equation}
		\begin{gathered}
			x_0 = \mvec{X_0},\,x^j = \mvec{X^j},\, x^{j+1} = \mvec{X^{j+1}}\in\bR^{mn}\\
			A = \mmat{\cA} = \begin{bmatrix}
						\horzbar & \mvec{A_1}^T& \horzbar \\
						 & \vdots & \\
						\horzbar & \mvec{A_p}^T & \horzbar \\
					\end{bmatrix}\in\bR^{p\times mn}.
		\end{gathered}
	\end{equation}
	Let $X_0 = U^0 \Sigma^0 V^0 + S^0,\, X^j = U^j \Sigma^j V^j + S^j, \, X^{j+1} = U^{j+1} \Sigma^{j+1} V^{j+1} + S^{j+1}$ be the singular value decompositions where the matrices of the left singular vectors are $U^{j}\in\bR^{m\times r}$ and their sparse components are supported at indices $\Omega^j = \supp\left(S^j\right)$. Consider the union of the index sets $\Omega := \left\{\Omega^0, \Omega^j, \Omega^{j+1} \right\}$ and construct the following frame
	\begin{equation}
	Q = \left[ I_n \otimes U \quad  E \right] =
		\begin{bmatrix}
			U & 0_{n, 3r} & \ldots & 0_{n, 3r} 	& \vline & \vline& & \vline\\
			0_{n, 3r} & U & \ldots &  0_{n, 3r}	& \vline & \vline& & \vline\\
			\vdots & & \ddots & 				& \vline & e_{\Omega_1} & \ldots & e_{\Omega_{3s}}\\
			0_{n, 3r}  & \ldots & \ldots & U 	& \vline & \vline& & \vline \\
		\end{bmatrix}\in\bR^{mn\times 3(nr+s)},\label{eq:Ainp2_qproj}
	\end{equation}
	where $U\in\bR^{m\times 3r}$ is formed by concatenating $U^0, U^j, U^{j+1}$ and $e_{\Omega_i}$ is a vector corresponding to a vectorized matrix with a single entry $1$ at the index $\Omega_i$.
	Note that $P_Q = Q\left(Q^T Q\right)^{-1}Q^T$ is an orthogonal projection matrix on the low-rank plus sparse subspace defined by the matrix $U$ and the index set $\Omega$. Note that by $Q$ being formed by the low-rank plus sparse bases of $X_0, X^j, X^{j+1}$ we have that the projection does not change the vectorized matrices
	\begin{equation}
		P_Q x_0 = x_0, \quad P_Q x^j = x^j, \quad P_Q x^{j+1} = x^{j+1}.	\label{eq:Ainp2_eq2}
	\end{equation}
	To establish the bound in \eqref{eq:Ainp2_eq1} we write the left hand side in its vectorized form
	\begin{equation}
		\left(x^j-x_0 \right)^T\left(x^{j+1}-x_0\right) - \step_j \left(A (x^j - x_0)\right)^T\left(A(x^{j+1} -x_0)\right), \label{eq:Ainp2_eq3}
	\end{equation}
	and replacing $A$ with $A_Q = A P_Q$ in \eqref{eq:Ainp2_eq3} using the identities in \eqref{eq:Ainp2_eq2} simplifies the term as
	\begin{align}
		\left(x^j- x_0 \right)&^T\left(x^{j+1}-x_0\right) - \step_j \left(A_Q (x^j - x_0)\right)^T\left(A_Q(x^{j+1} -x_0)\right) \\
		& = \left(x^j - x_0\right)^T\left( (x^{j+1} - x_0) - \step_j A_Q^* A_Q (x^{j+1} - x_0) \right) \\
		& = \left(x^j - x_0\right)^T\left( (I- \step_j A_Q^* A_Q) (x^{j+1} - x_0) \right) \\
		& \leq \| I- \step_j A_Q^* A_Q \|_2 \, \| x^j - x_0 \|_2 \, \| x^{j+1} - x_0 \|_2 \\
		&= \| I- \step_j A_Q^* A_Q\|_2 \,  \| X^j - X_0 \|_F \, \| X^{j+1} - X_0 \|_F,
	\end{align}
	where $\| I- \step_j A_Q^* A_Q \|_2$ is the $\ell_2$ operator norm of an $mn \times mn$ matrix.
	
	Similarly we now establish the bound in \eqref{eq:Ainp3_eq1} 
	\begin{align}
		\left\|X^j - X_0 - \step_j  \cA^*\left( \cA\left(X^j - X_0 \right) \right) \right\|_F &= \left\| x^j -x_0 + \step_j A^T A\left(x_0 - x^j\right)\right\|_2 \\
		&= \left\| \left(I- \step_j A^T A\right) \left(x^j - x_0\right)\right\|_2 \\
		& \leq  \left\| I- \step_j A_Q^* A_Q\right\|_2 \,  \left\| X^j - X_0 \right\|_F,\label{eq:Ainp3_eq2}
	\end{align}
	where we just vectorized the matrices and the linear operator $\cA(\cdot)$ and upper bounded the expression using $\ell_2$-operator norm $\| I- \step_j A_Q^* A_Q \|_2$. Matrix $A_Q$ acts on a subspace of $\LS_{m,n}(3r,3s,\mu)$ and is self-adjoint, as such its eigenvalues can be bounded using the RICs as done by \cite{Tanner2013normalized} and by \cite{Blanchard2015cgiht}
	\begin{equation}
		1-\step_j \left( 1 + \Delta_{3r, 3s, 2\mu}\right) \leq \lambda\left(I - \Rev{\step_j} A_Q^* A_Q\right) \leq 1 \Rev{-} \step_j \left( 1 - \Delta_{3r, 3s, 2\mu}\right). \label{eq:Ainp3_eq3}
	\end{equation}
\end{proof}

\end{document}